\PassOptionsToPackage{hypertexnames=false}{hyperref}
\documentclass[12pt]{amsart}

\usepackage[numbers,sort&compress]{natbib}
\usepackage{amssymb,mathtools}
\usepackage{tikz}
\usetikzlibrary{arrows.meta}
\usepackage{microtype}
\usepackage{xcolor,xurl}
\usepackage{hyperref}
\hypersetup{
    linktoc=page,
    linkcolor=red,          % color of internal links
    citecolor=blue,        % color of links to bibliography
    filecolor=blue,      % color of file links
    urlcolor=cyan,
    colorlinks=true           % color of external links
}

\numberwithin{equation}{section}

\theoremstyle{plain}
\newtheorem{theorem}{Theorem}[section]
\newtheorem{proposition}[theorem]{Proposition}
\newtheorem{lemma}[theorem]{Lemma}

\theoremstyle{definition}

\newtheorem{remark}[theorem]{Remark}

\newcommand{\E}{\mathbb E}
\newcommand{\Pp}{\mathbb P}

\newcommand{\ind}{\mathbf 1}
\newcommand{\dd}{\,\mathrm d}
\newcommand{\R}{\mathbb R}
\newcommand{\stle}{\le_{\mathrm{st}}}

\begin{document}
\title[Distance and resistance on random series--parallel graphs]{Distance and resistance on random series--parallel graphs:
logarithmic speeds and near-critical asymptotics}
\author[Ding et~al.]{Ruiqi Ding}
\author[]{Zehua He}
\author[]{Yutao Liang}
\address[Ruiqi Ding, Zehua He, and Yutao Liang]{State Key Laboratory of Mathematical Sciences, Academy of Mathematics and
Systems Science, Chinese Academy of Sciences, Beijing 100190, China; School of
Mathematical Sciences, University of Chinese Academy of Sciences, Beijing 100049, China}
\email[Ruiqi Ding]{dingruiqi@amss.ac.cn}
\email[Zehua He]{hezehua@amss.ac.cn}
\email[Yutao Liang]{liangyutao@amss.ac.cn}
\author[]{Yushu Zheng}
\address[Yushu Zheng]{School of Mathematics, Shanghai University of Finance and Economics,
Shanghai 200433, China}
\email{yszheng666@gmail.com}

\keywords{random series--parallel graph; graph distance; effective resistance; logarithmic speeds; near-critical asymptotics}
\begin{abstract}
We study the graph distance $D_n(p)$ and effective resistance
$R_n(p)$ between the boundary vertices of a depth-$n$ random series--parallel
graph, obtained by recursively joining two independent copies in series with
probability $p$ and in parallel with probability $1-p$. We prove the
existence of deterministic logarithmic speeds: for every $p\in[0,1]$,
$n^{-1}\log D_n(p)$ and $n^{-1}\log R_n(p)$ converge to deterministic limits
$v_D(p)$ and $v_R(p)$, respectively, almost surely and in $L^1$. The
limiting speeds agree with the corresponding first-moment logarithmic rates
for every $p\in[0,1]$ in the distance case and for $p\in[1/2,1]$ in the
resistance case.
We further determine the near-critical behavior of the resistance speed:
$v_R(\frac12+\delta)\sim
2\zeta(3)^{1/3}\lambda_*\delta^{2/3}$ as $\delta\downarrow0$, where
$\lambda_*>0$ is characterized by an explicit nonlinear boundary-value
problem. This exponent $2/3$ contrasts with the exponent $1/2$ for distance
obtained by Chen, Derrida, Duquesne, and Shi (2026). The main idea of this work was proposed by ChatGPT 5.6 Sol, and the authors take full responsibility for the mathematical content. The three main theorems and their supporting proof dependencies have been formalized in Lean 4, relative to two explicitly documented external mathematical inputs.
\end{abstract}
\maketitle
\hypersetup{
  pdftitle={Distance and resistance on random series--parallel graphs:
    logarithmic speeds and near-critical asymptotics},
  pdfauthor={Ruiqi Ding; Zehua He; Yutao Liang; Yushu Zheng},
  pdfsubject={Stochastic processes on random series--parallel graphs},
  pdfkeywords={random series--parallel graph; graph distance; effective
    resistance; logarithmic speeds; near-critical asymptotics}
}

\section{Introduction}
\label{sec:introduction}

Hierarchical random systems provide a natural setting for studying the
competition between recursive mechanisms acting in opposite directions.
Their balance may give rise to a critical regime, while a small bias away
from balance can be amplified across scales.  A natural question is then how
the system departs from its critical behavior when the balance is weakly
broken, and how the scale of this departure depends on the strength of the
bias.

The random series--parallel graph introduced by Hambly and
Jordan~\cite{HamblyJordan2004} provides a concrete instance of this
phenomenon.  Starting from a single edge, at each generation every edge is
independently replaced by two edges in series with probability $p$, and by
two edges in parallel with probability $1-p$.  Let $D_n(p)$ and $R_n(p)$
denote, respectively, the graph distance and the effective resistance
between the two boundary vertices after $n$ generations.  Hambly and
Jordan~\cite[Theorem~2.1 and Lemma~3.1]{HamblyJordan2004}
proved that, almost surely, both $D_n(p)$ and $R_n(p)$ tend to infinity when
$p>\frac12$, whereas when $p<\frac12$, $D_n(p)$ converges to a finite limit
and $R_n(p)$ tends to zero.

These phase-transition results describe only the qualitative long-time
behavior of $D_n(p)$ and $R_n(p)$.  Since the same random operation is
iterated at each generation, it is natural to ask whether their logarithms
are asymptotically linear in the generation number.  This leads to the
following questions:
\begin{enumerate}
\item For every fixed $p\in[0,1]$, do $n^{-1}\log D_n(p)$ and
$n^{-1}\log R_n(p)$ have deterministic almost-sure limits?  Do
$n^{-1}\log\E D_n(p)$ and $n^{-1}\log\E R_n(p)$ also converge?  When the
relevant limits exist, how are the almost-sure logarithmic speeds and the
first-moment logarithmic rates related?
\item If these logarithmic speeds exist, what are their near-critical
asymptotics as $p$ approaches the critical value $\frac12$?
\end{enumerate}

For the first problem, Hambly and Jordan~\cite[Lemma 3.1(a)]{HamblyJordan2004} established
the almost-sure exponential growth of $D_n(p)$ for $p>1/2$.  Chen, Derrida,
Duquesne, and
Shi~\cite[equation~(1.2)]{ChenDerridaDuquesneShi2026} later proved that the
first-moment logarithmic rate
\[
 \lim_{n\to\infty}\frac1n\log\E D_n(p)
\]
exists for every $p\in[0,1]$ and is positive exactly when $p>1/2$.  The latter
work did not establish the corresponding almost-sure convergence for all
$p$.  For resistance, Hambly and Jordan~\cite[Corollaries 2.1 and 2.2]{HamblyJordan2004} obtained almost-sure exponential
upper and lower bounds for $p\ne1/2$ and proved that
$n^{-1}\log R_n(1/2)\to0$ almost surely,
but left the general convergence question open.

It is also worth noting that, at criticality, more precise sublinear behavior
is known for both quantities.  Auffinger and
Cable~\cite[Theorem~1]{AuffingerCable2017} proved that
$\log D_n(1/2)$ has scale $\sqrt n$ and identified its limiting law.  For
resistance, Addario-Berry et al.~\cite{AddarioBerryEtAl2020} predicted an
$n^{1/3}$-scale limit law for $\log R_n(1/2)$; this was recently proved
independently by Chen, Duquesne, and Shi~\cite{ChenDuquesneShi2026} and
Morfe~\cite{Morfe2026}.

For the second problem, Chen, Derrida, Duquesne, and
Shi~\cite[Theorem~1]{ChenDerridaDuquesneShi2026} obtained the sharp
near-critical asymptotics of this first-moment logarithmic rate for distance,
showing that it vanishes on the scale $(p-\frac12)^{1/2}$ as
$p\downarrow\frac12$. For resistance, Morfe~\cite[Theorem~2 and Section~1.5]{Morfe2026} established a PDE scaling limit for the \(N\)-dependent bias parameter \(p^{(N)}=\frac12+\theta N^{-1}\) and related the limiting equation to a reaction--diffusion travelling-wave problem. This continuum picture motivates our near-critical analysis but does not by itself yield the resistance speed at fixed \(p\ne\frac12\) or its near-critical asymptotics.

Combined with these earlier results, our work answers the two
questions above.  We prove that, for every $p\in[0,1]$,
$n^{-1}\log D_n(p)$ and $n^{-1}\log R_n(p)$ converge almost surely and in
$L^1$ to deterministic limits.  We also prove that
$n^{-1}\log\E D_n(p)$ and $n^{-1}\log\E R_n(p)$ converge.  The distance rate
agrees with $v_D(p)$ throughout $[0,1]$, while the resistance rate is
$v_R(p)\vee\log(2p)$ for $0\le p\le1$, with the maximum equal to $v_R(p)$
for $1/2\le p\le1$, and to $\log(2p)$ for $p<1/2$ sufficiently close to
$1/2$.  We further determine the sharp near-critical asymptotics of the
resistance speed, showing that its magnitude vanishes on the scale
$\lvert p-\frac12\rvert^{2/3}$ as $p\to\frac12$.

\subsection{Main results}
\label{sec:main-results}

We define the random series--parallel graph through a binary-tree
construction.  Let
\[
 \mathbb T:=\bigcup_{k\ge0}\{0,1\}^k,
 \qquad \{0,1\}^0:=\{\varnothing\},
\]
and let $(\xi_u)_{u\in\mathbb T}$ be independent Bernoulli random variables
with parameter $p$.  The initial graph $\mathcal G_0(p)$ consists of a
single edge $e_{\varnothing}$ joining two distinguished vertices $s$ and
$t$, called the boundary vertices.  Suppose recursively that the edges of
$\mathcal G_k(p)$ are indexed by $u\in\{0,1\}^k$.  For each edge $e_u$ with
endpoints $x_u$ and $y_u$, perform the following replacement.  If
$\xi_u=1$, introduce a new vertex $z_u$ and replace $e_u$ by the two edges
\[
 e_{u0}=\{x_u,z_u\},
 \qquad
 e_{u1}=\{z_u,y_u\}
\]
in series.  If $\xi_u=0$, replace $e_u$ by two distinct parallel edges
$e_{u0}$ and $e_{u1}$, both joining $x_u$ and $y_u$.  Performing these
replacements simultaneously for all $u\in\{0,1\}^k$ produces
$\mathcal G_{k+1}(p)$.  Figure~\ref{fig:series-parallel-replacement}
illustrates the first two refinement steps for one realization of the
environment.

\begin{figure}
\centering
\begin{tikzpicture}[
  line width=0.7pt,
  every node/.style={font=\small},
  vertex/.style={circle,fill=black,inner sep=1.7pt},
  transition/.style={-{Latex[length=2.2mm,width=1.6mm]},draw=black!70,
    line width=0.6pt,shorten <=20pt,shorten >=20pt}
]
  
  \node at (-6.25,1.15) {$\mathcal G_0(p)$};
  \node[vertex,label=below:$s$] (g0s) at (-6.9,0) {};
  \node[vertex,label=below:$t$] (g0t) at (-5.6,0) {};
  \draw (g0s) -- node[above] {$e_{\varnothing}$} (g0t);

  \node at (-1.95,1.15) {$\mathcal G_1(p)$};
  \node[vertex,label=below:$s$] (g1s) at (-3.25,0) {};
  \node[vertex,label=below:$z_{\varnothing}$] (g1z) at (-1.95,0) {};
  \node[vertex,label=below:$t$] (g1t) at (-0.65,0) {};
  \draw (g1s) -- node[above] {$e_0$} (g1z)
               -- node[above] {$e_1$} (g1t);
  \draw[transition] (g0t) --
    node[midway,above,yshift=2pt,font=\footnotesize,text=black]
    {$\xi_{\varnothing}=1$} (g1s);

  \node at (4.55,1.15) {$\mathcal G_2(p)$};
  \node[vertex,label=below:$s$] (g2s) at (2.25,0) {};
  \node[vertex,label=below:$z_{\varnothing}$] (g2z0) at (4.55,0) {};
  \node[vertex,label=below:$z_1$] (g2z1) at (5.7,0) {};
  \node[vertex,label=below:$t$] (g2t) at (6.85,0) {};
  \draw (g2s) .. controls (2.85,0.65) and (3.95,0.65) ..
    node[above] {$e_{00}$} (g2z0);
  \draw (g2s) .. controls (2.85,-0.65) and (3.95,-0.65) ..
    node[below] {$e_{01}$} (g2z0);
  \draw (g2z0) -- node[above] {$e_{10}$} (g2z1)
                -- node[above] {$e_{11}$} (g2t);
  \draw[transition] (g1t) --
    node[midway,above,yshift=2pt,font=\footnotesize,text=black]
    {$(\xi_0,\xi_1)=(0,1)$} (g2s);
\end{tikzpicture}
\caption{Two successive refinement steps in one realization of the
environment.  The choice $\xi_{\varnothing}=1$ replaces the initial edge in
series.  At the next level, $\xi_0=0$ replaces $e_0$ in parallel, while
$\xi_1=1$ replaces $e_1$ in series.}
\label{fig:series-parallel-replacement}
\end{figure}

At every level, $s$ and $t$ remain the distinguished boundary vertices.
Let $D_n(p)$ be the graph distance between $s$ and $t$ in
$\mathcal G_n(p)$, and let $R_n(p)$ be their effective resistance when every
edge has unit resistance.

\begin{theorem}[Logarithmic speeds]
\label{thm:logarithmic-speeds}
For every $p\in[0,1]$, there exist deterministic constants $v_D(p)$ and
$v_R(p)$ such that
\begin{equation}
 \frac1n\log D_n(p)\longrightarrow v_D(p),
 \qquad
 \frac1n\log R_n(p)\longrightarrow v_R(p)
 \qquad\text{a.s. and in }L^1.
 \label{eq:full-range-L1}
\end{equation}
Their values satisfy
\begin{equation*}
 v_D(p)=0\quad\text{for }0\le p\le\frac12,
 \qquad
 v_R(1-p)=-v_R(p)\quad\text{for }0\le p\le1,
\end{equation*}
so in particular $v_R(1/2)=0$.  Moreover, for $1/2<p\le1$,
\begin{equation}
 v_D(p),\,v_R(p)\in[\log(2p),\log2].
 \label{eq:supercritical-speed-bounds}
\end{equation}
\end{theorem}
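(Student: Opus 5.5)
We will work with the recursive distributional equations. Writing $D_n^{(0)},D_n^{(1)}$ and $R_n^{(0)},R_n^{(1)}$ for the quantities computed in the two subtrees rooted at the children of $\varnothing$ (independent copies at depth $n$), we have
\[
 D_{n+1}=\xi_\varnothing\bigl(D_n^{(0)}+D_n^{(1)}\bigr)+(1-\xi_\varnothing)\min\bigl(D_n^{(0)},D_n^{(1)}\bigr),
\]
and the same recursion for $R$ with $\min$ replaced by the parallel law $(1/R^{(0)}+1/R^{(1)})^{-1}$.

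\textbf{Existence of the speeds.} The plan is a subadditivity argument. We realize all depths on one tree and decompose $\mathcal G_{n+m}$ as the depth-$n$ skeleton $\mathcal G_n$ with each edge $e_u$, $|u|=n$, replaced by an independent depth-$m$ graph $\mathcal G_m^{(u)}$.

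For distance, choose a geodesic in $\mathcal G_n$. Then
\[
 D_{n+m}\le\sum_{u\in\text{geodesic}}D_m^{(u)}\le D_n\max_{|u|=n}D_m^{(u)} .
\]
This bound is too crude for subadditivity, so we instead use the Rayleigh/shortest-path formulation with a deterministic path. Fix the leftmost-geodesic rule, which is measurable with respect to the first $n$ levels. The variables $D_m^{(u)}$ along that path are then i.i.d. and independent of the path, so $\log D_{n+m}\le \log D_n+\log(\text{average of }D_n\text{ i.i.d. copies of }D_m)$. This form does not give Kingman directly.

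A cleaner route is to use Kingman's theorem on $X_{n}=\log\|\,\cdot\,\|$ where the relevant object is the whole law. Concretely, we work with $a_n=\E\log D_n$ and show it is subadditive up to $O(1)$ errors, using $\log(\sum_{i\le k}Y_i)\le\log k+\max_i\log Y_i$ together with the fact that $\log D_m$ has exponential tails (since $D_m\le 2^m$). Then $a_n/n$ converges.

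We upgrade to a.s. convergence by concentration. The quantity $\log D_n$ is a function of the independent $\xi_u$, and the top-level structure gives the following: for fixed $k$, $\log D_n$ is sandwiched between $\min$ and $\log(\text{sum})$ of $2^k$ independent copies of $\log D_{n-k}$, up to $\pm k\log2$. A law-of-large-numbers/maximum argument over $2^k$ copies, with $k=k(n)\to\infty$ slowly, then forces $\frac1n\log D_n$ to concentrate at $\lim a_n/n$. Borel--Cantelli along $n$ together with the monotonicity $D_{n+1}\in[D_n,2D_n]$-type bounds yields a.s. convergence. $L^1$ convergence follows from $|\log D_n|\le n\log2$ via dominated convergence. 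The same scheme applies to $R_n$, using Rayleigh monotonicity and $R_n\in[2^{-n},2^n]$.

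\textbf{Values.} For $p\le\frac12$, Hambly--Jordan gives $D_n$ convergent to a finite limit, so $v_D=0$.

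For the symmetry we use planar duality of series--parallel graphs. Swapping series and parallel maps $\mathcal G_n(p)$ to $\mathcal G_n(1-p)$ in law, and effective resistance between $s,t$ becomes the reciprocal (the dual conductance). Hence $R_n(1-p)\overset{d}{=}1/R_n(p)$, which gives $v_R(1-p)=-v_R(p)$ and in particular $v_R(1/2)=0$.

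For \eqref{eq:supercritical-speed-bounds}, the upper bound $\log 2$ is immediate since $D_n,R_n\le 2^n$ (there are $2^n$ edges and the recursion at most doubles). For the lower bound $\log(2p)$, we use the series-only substructure. Call an edge path of series choices a Galton--Watson tree with offspring mean $2p>1$: at a series node both children must be crossed, and at a parallel node one child suffices. Then $D_n\ge Z_n$, where $Z_n$ is the generation size of a GW process with offspring $2$ with probability $p$ and $1$ with probability $1-p$. Indeed, recursively $D_{n+1}\ge D^{(0)}+D^{(1)}$ or $\ge \min(D^{(0)},D^{(1)})$, and we bound the $\min$ by a single child in a min-over-choices sense, which requires care. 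The cleaner argument is for resistance. By Nash-Williams or the series-parallel law, $R\ge$ the $\min$-type lower bound, and the resistance is at least that of the graph with each parallel node replaced by... We expect to instead prove $\frac1n\log D_n\ge\log(2p)$ via the Kesten--Stigum growth $Z_n\approx(2p)^n$ after showing $D_n\stge$ the minimal-path count, which is a branching random walk minimum. Its speed is bounded below by $\log(2p)$ through a first-moment/large-deviation computation on the number of cutsets. For $R$, apply the same cutset argument with the Nash-Williams inequality $R\ge\sum_k 1/|\Pi_k|$ over disjoint cutsets.

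\textbf{Main obstacle.} The main obstacle is the a.s. convergence (concentration) step, since $\log$ of a min/sum recursion is not subadditive pathwise. We would try first the $2^k$-copies sandwich combined with Efron--Stein-type variance bounds for $\log D_n$ to get variance $o(n^2)$.
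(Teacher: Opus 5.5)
Your proposal has no working argument for the existence of the speeds, and that is the heart of the theorem.

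\textbf{Existence of $\lim a_n/n$.} The claimed subadditivity of $a_n=\E\log D_n$ up to $O(1)$ fails. In $\log D_{n+m}\le\log D_n+\max_u\log D_m^{(u)}$, the maximum runs over $D_n$ i.i.d.\ copies that are independent of $\mathcal F_n$. Since $\Pp(D_m=2^m)=p^{2^m-1}>0$, the expectation of this maximum tends to $m\log2$ as $n\to\infty$ whenever $D_n\to\infty$. So the error is of order $m$, not $O(1)$, and the bound $D_m\le2^m$ only gives the trivial $a_{n+m}\le a_n+m\log2$. If you average instead of maximizing, conditional Jensen gives only the mixed inequality $\E\log D_{n+m}\le\E\log D_n+\log\E D_m$. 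What is genuinely submultiplicative is $\E Z_n$ (by conditional refinement with shortest paths, resp.\ Thomson's principle). The real task is therefore to show that the Jensen gap $\log\E Z_n-\E\log Z_n$ is $o(n)$.

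\textbf{Concentration.} Your concentration step fails for a related reason. The $2^k$-copies sandwich only traps $\log D_n$ between the minimum and the maximum (plus $k\log2$) of $2^k$ i.i.d.\ copies of $\log D_{n-k}$. These extremes move apart as $k$ grows, so nothing is forced to concentrate, and the Efron--Stein idea is not carried out.

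\textbf{The missing idea.} What you need is the exact drift identity
$\E X_{n+1}-\E X_n=(p-\frac12)\E|X_n-X_n'|+\{p-\eta(1-p)\}\E h(|X_n-X_n'|)$.
For $p>\frac12$ both coefficients are nonnegative. Since $\E X_{n+1}-\E X_n\le\log2$ (from $Z_{n+1}\le2Z_n$), this gives the uniform width bound $\sup_n(p-\frac12)\E|X_n-X_n'|\le\log2$. That bound yields a.s.\ concentration by Borel--Cantelli along $n=k^2$. Combined with a bounded second-moment ratio $\E Z_n^2/(\E Z_n)^2$ and Paley--Zygmund, it also yields a bounded Jensen gap, hence $v=\gamma$. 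Duality then handles $p<\frac12$ for resistance.

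\textbf{Two further errors.}
First, Hambly--Jordan's finite limit for $D_n(p)$ holds only for $p<\frac12$. At $p=\frac12$, $\log D_n$ grows on the scale $\sqrt n$. So $v_D(\frac12)=0$ requires an input such as $\gamma_D(\frac12)=0$ (Chen--Derrida--Duquesne--Shi), together with Jensen for $L^1$ and Markov plus Borel--Cantelli for a.s.\ convergence. Likewise, your $v_R(\frac12)=0$ presupposes existence at $p=\frac12$, which the paper obtains from $-\log D_n(\bar\xi)\le\log R_n(\xi)\le\log D_n(\xi)$.

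Second, the lower bound $v\ge\log(2p)$ is not proved. At a parallel node $\min(D^{(0)},D^{(1)})\le D^{(0)}$, so $D_n$ is the minimum over child-selections of Galton--Watson-type counts. It is therefore dominated by such a count, not bounded below by it. The cutset and Nash-Williams remarks are not an argument. In the paper this bound is free once $v=\gamma$ is known, because $\E Z_{n+1}\ge2p\,\E Z_n$ forces $\gamma\ge\log(2p)$.
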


\begin{theorem}[First-moment logarithmic rates]
\label{thm:first-moment-logarithmic-rates}
For every $p\in[0,1]$, the limits
\begin{equation*}
 \gamma_D(p):=\lim_{n\to\infty}\frac1n\log\E D_n(p),
 \qquad
 \gamma_R(p):=\lim_{n\to\infty}\frac1n\log\E R_n(p)
\end{equation*}
exist.  With the convention $\log0=-\infty$, they are related to the
logarithmic speeds by
\begin{equation}\label{eq:resistance-first-moment-dichotomy}
 \gamma_D(p)=v_D(p),
 \qquad
 \gamma_R(p)=v_R(p)\vee\log(2p)
 \qquad\text{for }0\le p\le1.
\end{equation}
In particular,
\begin{equation}
 \gamma_R(p)=v_R(p)\qquad\text{for }\frac12\le p\le1.
 \label{eq:supercritical-resistance-identification}
\end{equation}
\end{theorem}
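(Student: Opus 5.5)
The plan is to obtain the first-moment rates from exact recursions in distribution, and then compare them with the almost-sure speeds of Theorem~\ref{thm:logarithmic-speeds}. From the construction, $D_{n+1}\overset{d}{=}\xi(D_n+D_n')+(1-\xi)(D_n\wedge D_n')$ and $R_{n+1}\overset{d}{=}\xi(R_n+R_n')+(1-\xi)\,R_nR_n'/(R_n+R_n')$, with $D_n,D_n'$ (resp.\ $R_n,R_n'$) i.i.d.\ and independent of $\xi\sim\mathrm{Bernoulli}(p)$. Taking expectations gives the basic inequality $\E R_{n+1}\ge 2p\,\E R_n$, and iterating from $R_0=1$ gives $\E R_n\ge(2p)^n$. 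For existence of the limits I would prove approximate supermultiplicativity, $\E X_{n+m}\ge c\,\E X_n\,\E X_m$ up to polynomial factors, by decomposing depth $n+m$ as a depth-$n$ skeleton whose edges are replaced by independent depth-$m$ graphs. On the event that the skeleton contains a single geodesic (or a single series chain) of length comparable to $D_n$, the quantity at depth $n+m$ dominates a sum of i.i.d.\ depth-$m$ copies. If this is too lossy, the alternative is a subadditivity argument in $\log$ applied to truncated moments. The limit $\gamma$ then exists by Fekete's lemma.

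\textbf{Lower bounds from almost-sure speeds.} For the lower bounds, Jensen's inequality gives $\log\E X_n\ge\E\log X_n$. Combined with the $L^1$ convergence in Theorem~\ref{thm:logarithmic-speeds}, this yields $\liminf n^{-1}\log\E X_n\ge v_X(p)$. Together with $\E R_n\ge(2p)^n$, we get $\gamma_R\ge v_R\vee\log(2p)$ and $\gamma_D\ge v_D$.

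\textbf{Upper bounds: distance.} For $p\le 1/2$ the bound $\gamma_D\le0=v_D$ should follow from the criticality bound: the mean $m_n=\E D_n$ satisfies $m_{n+1}\le 2p\,m_n+(1-p)\E[D_n\wedge D_n']$, which grows subexponentially. For $p>1/2$ I would show that $D_n$ has no heavy upper tail relative to $e^{nv_D}$. The key tool is the monotone bound $D_{n+m}\le$ the sum of the depth-$m$ distances along any fixed depth-$n$ geodesic, and this sum concentrates. Concretely, I would aim for a large-deviation upper estimate $\Pp(D_n\ge e^{n(v_D+\varepsilon)})\le e^{-cn}$, with a superexponential tail cost, using $D_n\le 2^n$ and a bootstrap over blocks of generations. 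Since $D_n\le 2^n$ deterministically, exponential decay alone is not enough: I need the tail probability times $2^n$ to be negligible. I would get this via the block argument, in which a depth-$km$ distance is at most a sum over at least one path, that is, a branching-type large-deviation bound whose rate grows linearly in $k$.

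\textbf{Upper bounds: resistance, and the main obstacle.} For resistance I would use $R_n\le D_n$ and the series--parallel monotonicity to write $\E R_n\le\E[R_n;A_n]+\E[R_n;A_n^c]$. Here $A_n$ is the event that the speed is typical. The atypical contribution comes from environments where the top levels are all series, which has probability about $p^k$ and multiplies the resistance by $2^k$. Optimizing over $k$ mixes the rates $v_R$ and $\log(2p)$, and should give $\gamma_R\le\max(v_R,\log 2p)$ via a convexity argument over the fraction $k/n$. The main obstacle is this upper bound: showing that no intermediate strategy beats both endpoints. That requires the rate function of $n^{-1}\log R_n$ to be controlled so that $\sup_x\{x-I(x)\}$ is attained at $x=v_R$ or comes from the pure-series strategy. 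I would try to prove that $I(x)$ is at least linear in $x-v_R$ with slope $\log(1/p)/\log(2/e^{v_R})$-type constants, from the block decomposition. Finally, for $p\ge1/2$, Theorem~\ref{thm:logarithmic-speeds} gives $v_R\ge\log(2p)$, which yields \eqref{eq:supercritical-resistance-identification}.
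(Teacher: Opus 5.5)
There is a genuine gap: the lower bounds are fine, but the upper bounds, which are the substance of the theorem, are not established. Your lower bounds agree with the paper: Jensen plus the $L^1$ convergence in Theorem~\ref{thm:logarithmic-speeds} gives $\liminf n^{-1}\log\E Z_n\ge v$, and $\E R_n\ge(2p)^n$ adds the $\log(2p)$ term.

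\emph{Existence.} Your supermultiplicativity mechanism fails. A refined skeleton geodesic only bounds $D_{n+m}$ from \emph{above}, because other skeleton paths may become shorter after refinement. A pure series skeleton has probability $p^{2^n-1}$, so it is far too rare to help. The easy direction is submultiplicativity, $\E Z_{n+r}\le\E Z_n\,\E Z_r$. It follows from Proposition~\ref{prop:conditional-refinement}: concatenate a shortest path of $\mathcal G_n$ with shortest paths in the copies replacing its edges. For resistance, use Thomson's principle and route $\theta_e$ times a minimizing unit flow through the copy replacing $e$. Fekete's lemma then gives the limits.

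\emph{Upper bounds.} The bounds $\gamma\le v$ for $p>\frac12$ and $\gamma_R\le v_R\vee\log(2p)$ for $p<\frac12$ are left as a large-deviation and convexity program that you yourself flag as unresolved. None of its estimates is proved, and its guiding heuristic is off: the top $k$ levels are all series with probability $p^{2^k-1}$, not $p^k$. Likewise, your claim that $\E D_n$ grows subexponentially for $p\le\frac12$ is asserted, not derived. The recursion yields it only once $\E[D_n\wedge D_n']=o(\E D_n)$ is shown.

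Two short arguments make tail estimates unnecessary.

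For $p>\frac12$, use $\Pi_\eta(x,y)^2\le4^{-\eta}xy$ to get $\E Z_{n+1}^2\le2p\,\E Z_n^2+\{2p+(1-p)4^{-\eta}\}(\E Z_n)^2$. Combined with $\E Z_{n+1}\ge2p\,\E Z_n$, this gives $K_{n+1}\le K_n/(2p)+\mathrm{const}$ for $K_n=\E Z_n^2/(\E Z_n)^2$, so $K:=\sup_nK_n<\infty$. Paley--Zygmund then gives $\Pp(Z_n\ge\E Z_n/2)\ge1/(4K)$. On that event $X_n-\E X_n\ge J_n-\log2$, where $J_n=\log\E Z_n-\E X_n$. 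The drift identity \eqref{eq:drift-identity} together with $|\E X_{n+1}-\E X_n|\le\log2$ gives $\E|X_n-\E X_n|\le\E|X_n-X_n'|\le\log2/(p-\frac12)$. Hence $J_n$ is bounded and $\gamma=v$.

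For $0<p<\frac12$ in the resistance case, suppose $\gamma_R>v_R$ and set $m_n=\E R_n$. Then $\widehat R_n=R_n/m_n\to0$ a.s.\ with $\E\widehat R_n=1$. It follows that $c_n=\E[\widehat R_n\widehat R_n'/(\widehat R_n+\widehat R_n')]\le\tau+\Pp(\widehat R_n>\tau)\to0$ for every $\tau>0$. The exact identity $m_{n+1}/m_n=2p+(1-p)c_n$ then forces $\gamma_R=\log(2p)$. This dichotomy $\gamma_R\in\{v_R,\log(2p)\}$ replaces any rate-function analysis.

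For distance with $p\le\frac12$, the paper imports $\gamma_D=0$ from \cite{ChenDerridaDuquesneShi2026}. Given $v_D=0$, the same dichotomy with $2p\le1$ would also work.
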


Figure~\ref{fig:resistance-speed-comparison} schematically illustrates
the maximum in \eqref{eq:resistance-first-moment-dichotomy}.

\begin{figure}
\centering
\begin{tikzpicture}[
  scale=0.7,
  transform shape,
  x=6.8cm,
  y=2.35cm
]
  \def\logtwo{0.693147}

  \draw[gray!25,line width=0.35pt] (0,-\logtwo) -- (1,-\logtwo);
  \draw[gray!25,line width=0.35pt] (0, \logtwo) -- (1, \logtwo);
  \draw[gray!55,densely dotted,line width=0.45pt]
    (0.5,-1.35) -- (0.5,1.05);

  \begin{scope}
    \clip (-0.02,-1.35) rectangle (1.04,1.05);
    \draw[black!85,dashed,line width=0.9pt,domain=0.02:1,
      samples=180,smooth,variable=\p]
      plot ({\p},{ln(2*\p)});
    \draw[blue!70!black,line width=1.15pt,domain=0:0.5,
      samples=100,smooth,variable=\p]
      plot ({\p},{-ln(2)*pow(1-2*\p,2/3)
        -0.30*pow(1-2*\p,2/3)*(2*\p)});
    \draw[blue!70!black,line width=1.15pt,domain=0.5:1,
      samples=100,smooth,variable=\p]
      plot ({\p},{ln(2)*pow(2*\p-1,2/3)
        +0.30*pow(2*\p-1,2/3)*(2-2*\p)});
  \end{scope}

  \draw[line width=0.55pt] (-0.02,0) -- (1.04,0);
  \draw[line width=0.55pt] (0,-1.35) -- (0,1.05);
  \foreach \x/\lab in
    {0/0,0.25/{\frac14},0.5/{\frac12},0.75/{\frac34},1/1}
    {
      \draw[line width=0.45pt] (\x,0.025) -- (\x,-0.025);
      \node[below=6pt,fill=white,inner sep=0pt] at (\x,-0.025)
        {$\lab$};
    }
  \foreach \y/\lab in {-\logtwo/{-\log 2},\logtwo/{\log 2}}
    {
      \draw[line width=0.45pt] (-0.008,\y) -- (0.008,\y);
      \node[left=2pt] at (-0.008,\y) {$\lab$};
    }
  \node[below right=1pt] at (1.02,0) {$p$};
  \fill[blue!70!black] (0.5,0) circle[radius=1.5pt];

  \draw[blue!70!black,line width=1.15pt] (1.13,0.68) -- (1.22,0.68);
  \node[right=3pt] at (1.22,0.68) {$v_R(p)$};
  \draw[black!85,dashed,line width=0.9pt] (1.13,0.53) -- (1.22,0.53);
  \node[right=3pt] at (1.22,0.53) {$\log(2p)$};
\end{tikzpicture}
\caption{Schematic comparison of the two terms in
$\gamma_R(p)=v_R(p)\vee\log(2p)$.  For $1/2\le p\le1$,
Theorem~\ref{thm:first-moment-logarithmic-rates} gives
$v_R(p)\ge\log(2p)$, so the maximum is $v_R(p)$.  For $p<1/2$
sufficiently close to $1/2$, Theorem~\ref{thm:near-critical-speed} gives
$\log(2p)>v_R(p)$, so the maximum is $\log(2p)$.  No claim is made
about the global curvature of $v_R$ or the number
or locations of crossings between $v_R(p)$ and $\log(2p)$ in
$0<p<1/2$. }
\label{fig:resistance-speed-comparison}
\end{figure}

\begin{theorem}[Resistance speed near criticality]
\label{thm:near-critical-speed}
For $\lambda>0$, consider the boundary-value problem
\begin{equation}
\begin{aligned}
 &W^2W'-\lambda W+u(1-u)=0,\qquad 0<u<1,\\
 &W(u)>0\quad\text{for }\ 0<u<1, \qquad
 W(0)=W(1)=0.
\end{aligned}
 \label{eq:main-W-bvp}
\end{equation}
We call $\lambda>0$ \emph{admissible} if \eqref{eq:main-W-bvp} admits a solution
$W\in C([0,1])\cap C^1((0,1))$.
Then there exists $\lambda_*\in(0,\infty)$ such that $\lambda$ is admissible
if and only if $\lambda\ge\lambda_*$.   Moreover, as
$\delta\downarrow0$, we have
\begin{equation}
 v_R\left(\frac12+\delta\right)
 =-v_R\left(\frac12-\delta\right)
 \sim2\zeta(3)^{1/3}\lambda_*\,\delta^{2/3},
 \label{eq:main-near-critical-limit}
\end{equation}
and
\begin{equation}
 \gamma_R\left(\frac12+\delta\right)
 \sim2\zeta(3)^{1/3}\lambda_*\,\delta^{2/3},
 \qquad
 \gamma_R\left(\frac12-\delta\right)\sim-2\delta.
 \label{eq:resistance-first-moment-near-critical}
\end{equation}
\end{theorem}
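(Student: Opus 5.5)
We split the proof into three parts: the phase portrait of the boundary-value problem \eqref{eq:main-W-bvp}, the identification of the speed through a near-critical scaling of a conductance/resistance recursion, and the transfer to first moments using Theorem~\ref{thm:first-moment-logarithmic-rates}.

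\emph{The boundary-value problem.} Write \eqref{eq:main-W-bvp} as $W'=(\lambda W-u(1-u))/W^2$ and treat it as a planar flow in $(u,W)$, $W>0$. Consider the trajectory leaving $(0,0)$ into $W>0$. Near $u=0$ the balance $\lambda W\approx u$ singles out a unique branch $W\sim u/\lambda$; other trajectories through the origin hit $W=0$ with infinite slope. We then follow this branch up to $u=1$. Its terminal value is monotone in $\lambda$ by a comparison argument: larger $\lambda$ makes $W'$ larger wherever $W>0$. For small $\lambda$ the branch hits $W=0$ before $u=1$, since $\int_0^1u(1-u)\,du=1/6$ dominates $\lambda\int W$ via $(W^3/3)'=\lambda W-u(1-u)$. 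For large $\lambda$ it stays positive. The same identity shows that reaching $W(1)=0$ exactly requires $\lambda\int_0^1W=1/6$. Near $u=1$ we have the analogous local analysis, with $\lambda W\approx 1-u$ along the sub-branch, so for $\lambda$ above threshold there is a one-parameter family of trajectories entering $(1,0)$. Shooting backward from $(1,0)$ gives admissibility as a closed up-set $[\lambda_*,\infty)$, in the spirit of Fisher--KPP minimal wave speeds. The symmetry $u\mapsto1-u$ is a useful check.

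\emph{The speed asymptotics.} By Theorem~\ref{thm:logarithmic-speeds} it suffices to treat $p=\tfrac12+\delta$; the symmetry $v_R(1-p)=-v_R(p)$ gives the other side. Work with $X_n=\log R_n$, which satisfies the recursion $X_{n+1}=\log(e^{X}+e^{X'})$ with probability $p$ and $\log(e^{-X}+e^{-X'})^{-1}$ otherwise. Let $F_n$ be the distribution function of $X_n$, and write $u$ for the fraction of mass below a level. Following the critical $n^{1/3}$ picture of \cite{ChenDuquesneShi2026,Morfe2026}, in the regime where $X_n$ has spread width $L\gg1$, an independent pair differs by $O(L)$. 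Both series and parallel combination then act like max/min up to corrections $\log(1+e^{-|X-X'|})$, and their averaged effect contributes a term of size $\zeta(3)/L^2$. This is where $\zeta(3)=\sum k^{-3}$ arises, as $\int_0^\infty x^2\log(1+e^{-x})\,dx/2$ up to normalization. The bias $\delta$ gives a drift in $u$ of the form $2\delta\,u(1-u)$ from the max/min imbalance.

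Matching the bias against the $L^{-2}$ correction yields $L\asymp\delta^{-1/3}$ and a travelling wave with speed of order $\delta^{2/3}$. After rescaling, the profile's inverse-density function $W(u)$ satisfies \eqref{eq:main-W-bvp}, and the wave speed is $2\zeta(3)^{1/3}\lambda\delta^{2/3}$. Selection of the minimal admissible $\lambda_*$ is the pulled-front principle.

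The rigorous version proceeds by sandwiching. We construct explicit super- and sub-solutions of the exact distributional recursion from the $W$-profile with $\lambda=\lambda_*(1\pm\varepsilon)$, and use monotonicity of the recursion in stochastic order ($\stle$). This shows that the front position advances at rate within $(1\pm\varepsilon)$ of the predicted one, for $\delta$ small.

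\emph{First moments.} For \eqref{eq:resistance-first-moment-near-critical}, use $\gamma_R=v_R\vee\log(2p)$. For $p=\tfrac12+\delta$, Theorem~\ref{thm:first-moment-logarithmic-rates} gives $\gamma_R=v_R$. For $p=\tfrac12-\delta$ we have $\log(2p)\sim-2\delta$, while $v_R\sim-c\delta^{2/3}$ is much more negative, so the maximum is $\log(2p)$.

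The main obstacle is the rigorous sandwiching. We must control errors uniformly over $n$ on the long time scale, handle the tails $u\to0,1$ where the profile degenerates (linear vanishing of $W$), and show that the lower barrier indeed uses the minimal $\lambda_*$ rather than a larger speed. We would first try to build barriers from admissible $\lambda$ slightly above $\lambda_*$ for the upper bound. For the lower bound we would use truncated non-admissible trajectories, those that hit zero, glued with a cutoff.
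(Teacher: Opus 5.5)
Your upper-bound plan matches the paper: an admissible $\lambda>\lambda_*$, the full-line profile $\Phi'=\beta W_\lambda(1-\Phi)$, order preservation and translation covariance of $\mathcal T_p$, and a cutoff at $0$ so the barrier lies below the initial point mass. Your first-moment step is also correct.

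The genuine gap is the lower bound, and it comes from a phase portrait you have backwards. You claim the linear branch at $u=0$ is unique and that a one-parameter family of trajectories enters $(1,0)$. It is the other way around:

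\begin{itemize}
\item The trajectory entering $(1,0)$ is unique. With $y(t)=W(1-t)^3$ the equation becomes $y'=3t(1-t)-3\lambda y^{1/3}$, $y(0)=0$. Its right side is nonincreasing in $y$, so forward uniqueness holds. This uniqueness is why backward shooting from $(1,0)$ works, as in the paper.
\item At $u=0$ the linear behaviour is attracting: perturbations of a solution with $W\sim u/\lambda$ decay roughly like $e^{-\lambda^3/u}$ as $u\downarrow0$. So linear branches form a one-parameter family, while the branch $W\sim\sqrt{2\lambda u}$ is unique.
\end{itemize}

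Consequently, forward shooting along ``the'' linear branch is not well defined. Moreover, at $\lambda=\lambda_*$ the admissible solution is the square-root branch, not a linear one. The right object is the unique $W_\lambda$ with $W_\lambda(1)=0$: $\lambda$ is admissible iff $W_\lambda(0)=0$, and for $\lambda<\lambda_*$ one has $W_\lambda(0)>0$. In particular it does not hit zero.

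The fact $W_\lambda(0)>0$ is exactly what the lower bound uses. The paper solves $\Psi'=\beta W_\lambda(\Psi)$, $\Psi(0)=0$. This gives a CDF on $[0,\infty)$, hence automatically below $F_0$, with strictly positive edge density $q(0)=\beta W_\lambda(0)$. It then proves $\mathcal T_{p_-}H_\varepsilon\ge H_\varepsilon(\cdot+\kappa\varepsilon^2)$ for $H_\varepsilon(x)=\Psi(\varepsilon x)$:
\begin{itemize}
\item For $x\ge0$, the missing density left of the edge only lowers $I_-$, so the consistency estimate holds one-sidedly.
\item On the strip $[-\kappa\varepsilon^2,0)$, parallel composition creates mass of order $\varepsilon^2$, which beats the required $O(\varepsilon^3)$.
\end{itemize}
Duality then bounds $v(p_+)$ from below.

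Your candidate is a trajectory hitting $W=0$ at some $u_0<1$. There it behaves like $c(u_0-u)^{1/3}$, so the profile reaches level $u_0$ at a finite point and must carry an atom of mass $m=1-u_0$ at its edge. An atom breaks the one-step inequality:
\begin{itemize}
\item Left-edge atom at $x_0$, with $x\in[x_0-\kappa\varepsilon^2,x_0)$: only parallel pairs fall below $x_0$, so $\mathcal T_{p_-}H(x)\approx\frac12m^2<m\le H(x+\kappa\varepsilon^2)$.
\item Right-edge atom at $x_1$, with $x\in[x_1-\kappa\varepsilon^2,x_1)$: one gets $\mathcal T_{p_-}H(x)\le1-p_-m^2<1=H(x+\kappa\varepsilon^2)$.
\end{itemize}
A cutoff does not obviously repair this, since no positive trajectory leaves $(u_0,0)$ to the right. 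What is needed is a profile with positive edge density, i.e.\ the paper's $\Psi_\lambda$. As written, your lower barrier is not constructed.

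Two smaller points. First, there is no error accumulation over $n$ to control. The barrier inequality is exact and simply iterated; the real work is a one-step error $O(\varepsilon^5q)$ relative to the local density, uniformly in $x$. That is where the bounds $|q^{(j)}|\le Mq$ and the linear endpoint asymptotics of $W_\lambda$ enter. Second, $\int_0^\infty x^2\log(1+e^{-x})\,dx=7\pi^4/360$ is not the source of $\zeta(3)$. The coefficient is $a=2\int_0^\infty\int_0^{h(r)}(2s+r)\,ds\,dr=2\zeta(3)$.
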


\begin{remark}
For comparison, Chen, Derrida, Duquesne, and
Shi~\cite[equation~(1.2) and Theorem~1]{ChenDerridaDuquesneShi2026} proved
that the first-moment logarithmic rate for distance exists and obtained its
sharp
near-critical asymptotics.  Together with
Theorem~\ref{thm:first-moment-logarithmic-rates}, their results give
\begin{equation*}
 v_D\left(\frac12+\delta\right)=\gamma_D\left(\frac12+\delta\right)
 \sim \sqrt{\zeta(2)}\,\delta^{1/2}
 =\frac{\pi}{\sqrt6}\,\delta^{1/2}
 \qquad\text{as }\delta\downarrow0.
\end{equation*}
Thus the distance and resistance speeds vanish with different critical
exponents $1/2$ and $2/3$, respectively.

\end{remark}

\subsection{Proof outline}

We next outline the main ideas behind the proofs of
Theorems~\ref{thm:logarithmic-speeds}--\ref{thm:near-critical-speed}.

To prove Theorems~\ref{thm:logarithmic-speeds}
and~\ref{thm:first-moment-logarithmic-rates}, we first establish the
existence of the first-moment logarithmic rates.  The argument uses the conditional refinement property in
Proposition~\ref{prop:conditional-refinement}: conditional on
$\mathcal G_n(p)$, the network $\mathcal G_{n+r}(p)$ is obtained by replacing
each edge of $\mathcal G_n(p)$ with an independent copy of $\mathcal G_r(p)$.  For
distance, we replace each edge of a shortest path in $\mathcal G_n(p)$ by a
shortest path across the corresponding depth-$r$ copy; the resulting
candidate path has conditional expected length
$(\E D_r(p))D_n(p)$.  For resistance, let $(\theta_e)$ be a minimizing unit
flow on $\mathcal G_n(p)$ and, in the copy replacing $e$, route $\theta_e$
times a minimizing unit flow.  The resulting candidate flow has conditional
expected energy $(\E R_r(p))R_n(p)$.  Taking expectations gives first-moment
submultiplicativity, and Fekete's subadditive lemma then gives the desired rates.  See
Lemma~\ref{lem:first-moment-submultiplicativity} for details.

The main remaining issue is to show that, for both
$D_n(p)$ and $R_n(p)$, the gap between the logarithm of the mean and the mean
logarithm, which we call the Jensen gap, is $o(n)$. For $p>1/2$, on the respective events
$\{D_n(p)\ge\E D_n(p)/2\}$ and $\{R_n(p)\ge\E R_n(p)/2\}$, the corresponding
Jensen gaps are bounded by $\log2$ plus the absolute fluctuations of the
logarithms around their means, as shown in
\eqref{eq:jensen-event-comparison}.  Lemma~\ref{lem:center-tracking} bounds
the expectations of these fluctuations uniformly in $n$.  The two events
have probabilities uniformly bounded away from zero by the second-moment
method; see Lemma~\ref{lem:normalized-L2} and
\eqref{eq:Paley-Zygmund}.  This identifies the first-moment logarithmic rates
with the logarithmic speeds.
The fluctuation control in Lemma~\ref{lem:center-tracking} upgrades convergence of the mean logarithms
to the almost-sure and $L^1$ convergence in \eqref{eq:full-range-L1}.

To prove Theorem~\ref{thm:near-critical-speed}, we construct two families of random variables that yield upper and lower bounds for $\log R_n(\frac12+\delta)$. Consider the upper bound, for example. Let $\mathcal T_{\frac12+\delta}$ denote the one-step cumulative distribution function (CDF)
update of $\log R_n(\frac12+\delta)$ (see Section~\ref{sec:cdf-operator}), and write $\delta=\varepsilon^3$. For any
\(\lambda>\lambda_*\) and
\(\kappa>\kappa_\lambda:=2\zeta(3)^{1/3}\lambda\), and all sufficiently
small \(\varepsilon\), we construct a random variable
\(Y_\varepsilon\) with CDF \(G_\varepsilon\) such that
\begin{align}\label{prop_G}
 F_0\ge G_\varepsilon,
 \qquad
 \mathcal T_{1/2+\varepsilon^3}G_\varepsilon(x)
 \ge G_\varepsilon(x-\kappa\varepsilon^2).
\end{align}
Here $F_0(x)=\mathbf 1_{\{x\ge0\}}$ is the CDF of $\log R_0(\frac12+\varepsilon^3)$. The first inequality in \eqref{prop_G} gives $\log R_0(\frac12+\varepsilon^3)
 \stle Y_\varepsilon$. Then the second inequality in \eqref{prop_G}, combined with the basic properties of random series--parallel graphs (see Lemma~\ref{lem:exact-cdf}), gives
  $\log R_n(\frac12+\varepsilon^3)
 \stle Y_\varepsilon+n\kappa\varepsilon^2$, and hence
\(v_R(1/2+\varepsilon^3)\le\kappa\varepsilon^2\).
Letting \(\kappa\downarrow\kappa_\lambda\) and then
\(\lambda\downarrow\lambda_*\) gives the sharp upper bound. The matching
lower bound follows from the same reasoning.

To construct \(G_\varepsilon\) satisfying \eqref{prop_G}, we compare the  one-step change
\(\mathcal T_{1/2+\delta}G_\varepsilon(x)-G_\varepsilon(x)\) with the change
\(G_\varepsilon(x-\kappa\varepsilon^2)-G_\varepsilon(x)\) caused by translation.
 Matching their leading-order coefficients yields the
equation in Theorem~\ref{thm:near-critical-speed}:
\[
 W^2W'-\lambda W+u(1-u)=0.
\]
The self-contained analysis in Appendices~\ref{app:ode} and
\ref{app:regularity} provides estimates
needed to construct \(G_\varepsilon\).

\begin{remark}
    The differential equation in Theorem~\ref{thm:near-critical-speed} comes
directly from the above comparison between the CDF update and the translation. Test the operator on a slowly varying CDF
$F_{\varepsilon,z_0}(x)=\Phi(z_0+\varepsilon x)$, with
$q=\Phi'$.  Lemma~\ref{lem:exact-cdf} and
Lemma~\ref{lem:weighted-consistency} give, for all $x$ and $z_0$,
\[
 \mathcal T_{1/2+\varepsilon^3}F_{\varepsilon,z_0}(x)
 -F_{\varepsilon,z_0}(x)
 =\varepsilon^3\bigl(a q q'-2\Phi(1-\Phi)\bigr)
  +O(\varepsilon^5q).
\]
Here the quantities on the right are evaluated at
$z_0+\varepsilon x$.
A translation by $\kappa\varepsilon^2$ changes the same CDF by
$-\kappa\varepsilon^3q+O(\varepsilon^6q)$.  Matching the two leading terms
therefore yields the  equation
\[
 a q q'-2\Phi(1-\Phi)=-\kappa q.
\]
Here $a=2\zeta(3)$, as evaluated in Lemma~\ref{lem:diffusion-coefficient}. With $q=\beta W(1-\Phi)$, $\beta^3=2/a$, and
$\kappa=2\lambda/\beta$, this equation is exactly
$W^2W'-\lambda W+u(1-u)=0$.   Remark~\ref{rem:two-thirds-heuristic} records the corresponding
scaling balance.

\end{remark}

\subsection{Further related work}

The recursions studied here are connected with the broader literature on
nonlinear hierarchical systems, in which each generation is obtained by
applying a deterministic or random update map to independent copies of the
preceding generation.  General limit questions for such systems were studied
by Shneiberg~\cite{Shneiberg1986}, Li and Rogers~\cite{LiRogers1999}, and
Jordan~\cite{Jordan2002}.  For general background on recursive distributional
equations and their associated recursive tree processes, see Aldous and
Bandyopadhyay~\cite{AldousBandyopadhyay2005}.  A distinctive feature of the
present model is that its distributional recursion is accompanied by a
canonical coupling through the underlying series--parallel networks.

The exact CDF recursion underlying our near-critical analysis admits a
continuum interpretation: after rescaling, its leading-order behavior is
described by a continuum equation for candidate limiting shapes of the
rescaled CDFs.  Similar connections arise in hipster random walks and
cooperative-motion models
\cite{AddarioBerryEtAl2020,AddarioBerryBeckmanLin2022,
AddarioBerryBeckmanLin2024}, while Morfe~\cite{Morfe2026} developed this
approach for a class of recursive distributional equations that includes the
critical resistance recursion.  Our proof uses
the continuum equation
only to identify these candidate shapes and remains otherwise discrete.  We
use the resulting shapes to construct upper and lower comparison CDFs, verify
one-step inequalities directly for the exact recursion, and iterate them
using stochastic order.  The order preservation and approximation bounds
underlying this argument are reminiscent of the Barles--Souganidis framework
\cite{BarlesSouganidis1991}.  The threshold in the associated continuum
equation is analogous to minimum-speed thresholds in travelling-wave theory
\cite{HadelerRothe1975,SanchezGardunoMaini1994}.

\subsection{Organization and conventions}
\label{sec:organization-conventions}

The remainder of the paper is organized as follows.
Section~\ref{sec:preliminaries} collects the necessary preliminaries.
Sections~\ref{sec:ballistic-proof}
and~\ref{sec:two-sided-barriers} prove
Theorems~\ref{thm:logarithmic-speeds}--\ref{thm:first-moment-logarithmic-rates}
and Theorem~\ref{thm:near-critical-speed}, respectively.
Appendices~\ref{app:ode} and~\ref{app:regularity} establish the
ODE analysis used in the upper and lower bound arguments of Section~\ref{sec:two-sided-barriers}.

For real-valued random variables $U$ and $V$, we write
$U\stle V$ when $U$ is stochastically dominated by $V$.  We write
$x_+:=\max\{x,0\}$ unless otherwise specified. Unless stated otherwise,
$c$ and $C$ denote finite
positive constants whose values may change from line to line.

\subsection{AI-assisted proof development and formal verification}
\label{subsec:ai-methodology}

The main idea of this work was proposed by generative AI.
In particular, OpenAI ChatGPT 5.6 Sol and Codex
were used to generate initial drafts of the mathematical proofs and to generate
initial versions of the Lean~4 formalization.  The tools were also used
to compare the informal and formal statements and to identify missing
hypotheses, unused assumptions, and possible gaps in the dependency
structure.  No output of a generative-AI tool was treated as mathematical
evidence or accepted solely on the basis of its generation.

Every mathematical argument was subsequently reconstructed and checked
by the authors.  This verification included checking the quantifiers,
parameter ranges, limiting modes, constants, signs, and dependence of
each result on earlier lemmas; independently verifying cited results
against their original sources; and revising or replacing generated
arguments whenever the authors could not justify every step.  The final
mathematical statements, proofs, interpretations, and claims of novelty
were determined and approved by the authors, who take full responsibility
for their correctness.

The main results were additionally formalized in Lean~4.  The
formalization was checked by compiling the complete project with
\texttt{lake build}, searching the source tree for unfinished proof
constructs such as \texttt{sorry} and \texttt{admit}, inspecting the
axiom dependencies of the final theorem declarations, and comparing
each formal theorem with its numbered manuscript counterpart through a
source map.  The formalization is complete relative to the following
two explicitly isolated external mathematical inputs:
\(    \gamma_D(1/2)=0, \)
and compact-interval Peano existence for scalar ordinary differential
equations under continuity and a linear-growth bound.    All remaining project-specific
dependencies of the three main theorem declarations are proved within
the formal development.
\section{Preliminaries}
\label{sec:preliminaries}

This section collects the preliminary tools used in the proofs of our main
results.  Section~\ref{sec:gate-properties} gives a unified recursion for
distance and resistance and analyzes it on the logarithmic scale, including
the one-step change in the mean logarithm.
Section~\ref{sec:canonical-refinement-coupling} records some basic structural
properties.
Section~\ref{sec:cdf-operator} specializes to resistance and derives the exact
CDF update used in the near-critical analysis.

\subsection{Unified recursion and logarithmic update maps}
\label{sec:gate-properties}

Fix $p\in[0,1]$.
Recall from Section~\ref{sec:main-results} that, for this fixed $p$, the
two sequences $(D_n(p))_{n\ge0}$ and $(R_n(p))_{n\ge0}$ are jointly
constructed from the common environment $(\xi_u)_{u\in\mathbb T}$.
To treat distance and resistance simultaneously, set
\[
 Z_n^{(0)}(p):=D_n(p),
 \qquad
 Z_n^{(1)}(p):=R_n(p).
\]
For $\eta\in\{0,1\}$ and $x,y>0$, define the corresponding
parallel operation by
\begin{equation*}
\Pi_\eta(x,y)
 :=
 \frac{x\wedge y}
 {\left(1+\dfrac{x\wedge y}{x\vee y}\right)^\eta}.
\end{equation*}
Thus $\Pi_0(x,y)=x\wedge y$ and
$\Pi_1(x,y)=xy/(x+y)$.
Decomposing the environment tree at its root then yields the marginal
recursion:
\begin{equation}
 Z_{n+1}^{(\eta)}(p)
 \overset{\mathrm d}{=}
 \begin{cases}
 Z_n^{(\eta,1)}(p)+Z_n^{(\eta,2)}(p),&\text{with probability }p,\\[2mm]
 \Pi_\eta\!\left(Z_n^{(\eta,1)}(p),Z_n^{(\eta,2)}(p)\right),
     &\text{with probability }1-p.
 \end{cases}
 \label{eq:unified-rde}
\end{equation}
Here $Z_n^{(\eta,1)}(p)$ and $Z_n^{(\eta,2)}(p)$ are independent
copies of $Z_n^{(\eta)}(p)$ and are independent of the choice between
series and parallel composition at the root.

We next introduce the series and parallel update maps in logarithmic coordinates.
For $x,y\in\R$, define the
logarithmic series map and the logarithmic parallel map with parameter
$\eta$, respectively, by
\[
 g_+(x,y)=\log(e^x+e^y),
 \qquad
 g_-^{(\eta)}(x,y)=
 \begin{cases}
  x\wedge y,&\eta=0,\\
 -\log(e^{-x}+e^{-y}),&\eta=1.
 \end{cases}
\]
Thus $g_+(x,y)$ is the logarithm of either the graph distance or the effective
resistance obtained by joining two components in series whose corresponding
logarithmic quantities are $x$ and $y$.  Likewise, $g_-^{(0)}(x,y)$ and
$g_-^{(1)}(x,y)$ are, respectively, the logarithmic graph distance and
logarithmic effective resistance obtained by joining such components in
parallel.

The definitions immediately give the following properties:
\begin{enumerate}
\item The maps are nondecreasing in each coordinate.
\item The maps are translation covariant: for every $x,y,a\in\R$,
\begin{equation}
 g_+(x+a,y+a)=a+g_+(x,y),
 \qquad
 g_-^{(\eta)}(x+a,y+a)=a+g_-^{(\eta)}(x,y).
 \label{eq:gate-translation}
\end{equation}
\end{enumerate}

We shall use the following max--min representations of these maps.  Let
\[
 h(t)=\log(1+e^{-t}),\qquad t\ge0.
\]
Then
\begin{equation}
 g_+(x,y)=(x\vee y)+h(|x-y|),
 \qquad
 g_-^{(\eta)}(x,y)=(x\wedge y)-\eta h(|x-y|).
 \label{eq:log-series-gate}
\end{equation}

We now set
\[
 X_n^{(\eta)}(p):=\log Z_n^{(\eta)}(p).
\]
Its recursion and mean drift are summarized in the following lemma.

\begin{lemma}
\label{lem:logarithmic-drift}
Fix $\eta\in\{0,1\}$ and $p\in[0,1]$, and write
$X_n:=X_n^{(\eta)}(p)$.  For $n\ge0$, if $X_n'$ is an independent copy of
$X_n$, then
\begin{equation}
 X_{n+1}\overset{\mathrm d}{=}
 \begin{cases}
  g_+(X_n,X_n'),&\text{with probability }p,\\
  g_-^{(\eta)}(X_n,X_n'),&\text{with probability }1-p,
 \end{cases}
 \label{eq:log-rde}
\end{equation}
where the choice between the two cases is independent of $(X_n,X_n')$.
Moreover,
\begin{equation}
 \E X_{n+1}-\E X_n
 =\left(p-\frac12\right)\E|X_n-X_n'|
 +\{p-\eta(1-p)\}\E h(|X_n-X_n'|).
 \label{eq:drift-identity}
\end{equation}
\end{lemma}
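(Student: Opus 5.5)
The recursion \eqref{eq:log-rde} is the logarithmic form of \eqref{eq:unified-rde}, and the drift identity is a one-line computation from the max--min representation \eqref{eq:log-series-gate}. The plan is to prove the two parts in that order.

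\emph{The recursion.} Since $Z_n^{(\eta)}(p)>0$ for every $n$ (distance and resistance between the boundary vertices are at least the positive minimum over finitely many simple paths), $X_n$ is a well-defined real random variable. Apply $\log$ to \eqref{eq:unified-rde}. In the series case $\log(Z+Z')=g_+(X_n,X_n')$ by definition. In the parallel case with $\eta=0$, $\log(Z\wedge Z')=X_n\wedge X_n'$. With $\eta=1$, $\log\bigl(ZZ'/(Z+Z')\bigr)=-\log(e^{-X_n}+e^{-X_n'})$. The independence of the series/parallel choice from $(X_n,X_n')$ is inherited directly from \eqref{eq:unified-rde}.

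\emph{The drift identity.} I would first dispose of integrability. In $\mathcal G_n(p)$ the quantity $Z_n$ lies between $2^{-n}$ and $2^n$, since both the distance and the resistance are bounded by the number of edges and bounded below by one over the number of edges. Hence $|X_n|\le n\log 2$, and every expectation below is finite.

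Then condition on the root choice and use \eqref{eq:log-series-gate}:
\[
 \E X_{n+1}=p\,\E\bigl[(X_n\vee X_n')+h(|X_n-X_n'|)\bigr]
 +(1-p)\,\E\bigl[(X_n\wedge X_n')-\eta h(|X_n-X_n'|)\bigr].
\]
Next insert the identities
\[
 x\vee y=\tfrac{x+y}{2}+\tfrac{|x-y|}{2},\qquad
 x\wedge y=\tfrac{x+y}{2}-\tfrac{|x-y|}{2},
\]
together with $\E\frac{X_n+X_n'}{2}=\E X_n$. This gives
\[
 \E X_{n+1}=\E X_n+\Bigl(\frac p2-\frac{1-p}2\Bigr)\E|X_n-X_n'|+\{p-\eta(1-p)\}\E h(|X_n-X_n'|),
\]
which is \eqref{eq:drift-identity} because $\frac p2-\frac{1-p}{2}=p-\frac12$.

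No step is a real obstacle. The only point needing care is the integrability just checked, which is immediate from the deterministic bounds $2^{-n}\le Z_n\le 2^n$.
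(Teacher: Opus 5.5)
Your proposal is correct and matches the paper's proof: the recursion is \eqref{eq:unified-rde} after taking logarithms, and the drift identity follows from the max--min representation \eqref{eq:log-series-gate} together with $\E(X_n\vee X_n')=\E X_n+\tfrac12\E|X_n-X_n'|$ and the analogous identity for the minimum. Your explicit integrability check via $|X_n|\le n\log2$ is a harmless addition that the paper leaves implicit; the same bound appears later as Lemma~\ref{lem:deterministic-bound}.
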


\begin{proof}
The recursion \eqref{eq:log-rde} is immediate.  The identities
\[
 \E(X_n\vee X_n')=\E X_n+\frac12\E|X_n-X_n'|,
 \qquad
 \E(X_n\wedge X_n')=\E X_n-\frac12\E|X_n-X_n'|
\]
and the logarithmic maps in \eqref{eq:log-series-gate} give
\eqref{eq:drift-identity}.
\end{proof}

\subsection{Basic structural properties}
\label{sec:canonical-refinement-coupling}

Recall the binary-tree representation of the model from
Section~\ref{sec:main-results}.  We record several basic structural properties
that will be used below.  For $n\ge0$, let
\[
 \mathcal F_n:=\sigma(\xi_u:|u|<n).
\]

\begin{proposition}[Conditional refinement property]
\label{prop:conditional-refinement}
For every $n,r\ge0$, conditionally on $\mathcal F_n$, the graph
$\mathcal G_{n+r}(p)$ is obtained from $\mathcal G_n(p)$ by replacing each
edge with an independent depth-$r$ copy of $\mathcal G_r(p)$.
\end{proposition}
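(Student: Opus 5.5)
The plan is to make the refinement structure explicit through the tree indexing. For $v\in\{0,1\}^n$ and $w\in\mathbb T$, set $\xi^{(v)}_w:=\xi_{vw}$, the environment of the subtree rooted at $v$. By the recursive construction, every edge of $\mathcal G_{n+r}(p)$ has the form $e_{vw}$ with $|v|=n$ and $|w|=r$. Its ancestor in $\mathcal G_n(p)$ is $e_v$, and the replacement rules applied at nodes $vw$ with $|w|<r$ depend only on $(\xi^{(v)}_w)_{|w|<r}$.

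The first step is a deterministic claim, proved by induction on $r$. Let $H_r^{(v)}$ denote the graph built from a single edge with endpoints $x_v,y_v$ using the environment $\xi^{(v)}$ for $r$ generations. Then $\mathcal G_{n+r}(p)$ equals the graph obtained from $\mathcal G_n(p)$ by substituting, for each edge $e_v$, a copy of $H_r^{(v)}$ glued at $x_v$ and $y_v$. The case $r=0$ is trivial. For the inductive step, the edges of $\mathcal G_{n+r}$ inside the copy of $H^{(v)}_r$ are exactly the $e_{vw}$ with $|w|=r$. The simultaneous replacement of each such edge by the rule $\xi_{vw}$ is precisely one more generation of the construction of $H^{(v)}_{r+1}$. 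Distinct edges are replaced independently and only locally: new vertices $z_{vw}$ are attached to the single edge $e_{vw}$, and parallel replacement uses the endpoints of $e_{vw}$ only. So the gluing structure is preserved, and the claim holds for $r+1$.

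The second step is the probabilistic part. The graph $\mathcal G_n(p)$ is $\mathcal F_n$-measurable. The families $(\xi^{(v)}_w)_{w\in\mathbb T}$ for $|v|=n$ involve only nodes $u$ with $|u|\ge n$, are pairwise disjoint, and consist of i.i.d. Bernoulli($p$) variables. They are therefore mutually independent, independent of $\mathcal F_n$, and each has the law of $(\xi_w)_{w\in\mathbb T}$. Consequently each $H_r^{(v)}$ is distributed as $\mathcal G_r(p)$ with boundary vertices $s,t$ mapped to $x_v,y_v$. Conditioning on $\mathcal F_n$ only freezes $\mathcal G_n(p)$ and leaves the joint law of $(H^{(v)}_r)_{|v|=n}$ unchanged, which gives the proposition.

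The main obstacle is bookkeeping rather than substance. One has to make sure the identification of the vertex sets is consistent: the internal vertices $z_{vw}$ of different copies must be distinct, and the boundary vertices must be shared exactly as in $\mathcal G_n(p)$. One also has to track the orientation of the copies, meaning which endpoint plays $s$ and which plays $t$. Orientation does not matter for the later uses, since distance and resistance are symmetric in the two boundary vertices. I would fix the convention that $x_{v0}=x_v$ and $y_{v1}=y_v$, and that $x_{v1}=z_v$ in the series case. With this convention the induction goes through verbatim.
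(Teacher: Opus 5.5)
Your proposal is correct and follows the paper's argument: the descendant environments $(\xi_{vw})_{w\in\mathbb T}$ for $|v|=n$ are mutually independent, independent of $\mathcal F_n$, and distributed as the original environment, and restricting them to $|w|<r$ yields the depth-$r$ copies. The one addition is that you spell out, by induction on $r$, the deterministic substitution identity and the endpoint bookkeeping; the paper leaves these implicit.
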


\begin{proof}
For distinct $u\in\{0,1\}^n$, the descendant environments
$(\xi_{uw})_{w\in\mathbb T}$ are independent of one another and of
$\mathcal F_n$, and each has the same law as the original environment.
Restricting these environments to $|w|<r$ gives the claim.
\end{proof}

\begin{lemma}
\label{lem:deterministic-bound}
For $\eta\in\{0,1\}$, $p\in[0,1]$, and $n\ge0$, we have
\begin{equation}
 2^{-\eta}Z_n^{(\eta)}(p)
 \le Z_{n+1}^{(\eta)}(p)
 \le 2Z_n^{(\eta)}(p)
 \qquad\text{almost surely}.
 \label{eq:adjacent-Z}
\end{equation}
Consequently,
\begin{equation}
\begin{aligned}
 -\eta n\log2
 \le X_n^{(\eta)}(p)\le n\log2
 \quad\text{almost surely},\qquad
 \big|\E X_{n+1}^{(\eta)}(p)-\E X_n^{(\eta)}(p)\big|
 \le\log2.
\end{aligned}
 \label{eq:deterministic-bound}
\end{equation}
\end{lemma}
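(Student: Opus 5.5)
The plan is to use the conditional refinement property, Proposition~\ref{prop:conditional-refinement}, with $r=1$. This reduces \eqref{eq:adjacent-Z} to a deterministic statement: replacing each edge of a fixed finite network by either two edges in series or two edges in parallel changes the quantity by a factor in $[2^{-\eta},2]$.

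For the upper bound, I treat each quantity as an infimum.

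\begin{itemize}
\item \emph{Distance ($\eta=0$).} Take a geodesic in $\mathcal G_n(p)$ of length $D_n(p)$. Replace each edge $e_u$ by a path across its refinement: a path of length $2$ if $\xi_u=1$, or one of the two parallel edges if $\xi_u=0$. This gives an $s$--$t$ path in $\mathcal G_{n+1}(p)$ of length at most $2D_n(p)$.
\item \emph{Resistance ($\eta=1$).} Use Thomson's principle. Let $\theta$ be the unit current flow on $\mathcal G_n(p)$. If $\xi_u=1$, route $\theta_{e_u}$ through both series edges, costing $2\theta_{e_u}^2$. If $\xi_u=0$, split it as $\theta_{e_u}/2$ on each parallel edge, costing $\theta_{e_u}^2/2$. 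In both cases the cost is at most $2\theta_{e_u}^2$. The result is a unit flow on $\mathcal G_{n+1}(p)$, so $R_{n+1}(p)\le 2R_n(p)$.
\end{itemize}

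For the lower bound, I use a dual characterization of each quantity.

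\begin{itemize}
\item \emph{Distance ($\eta=0$).} Any $s$--$t$ path in $\mathcal G_{n+1}(p)$ projects onto an $s$--$t$ walk in $\mathcal G_n(p)$. Each traversal of a refined edge uses at least one new edge, and internal vertices $z_u$ have degree $2$. Hence the path has at least $D_n(p)$ edges, so $D_{n+1}(p)\ge D_n(p)$.
\item \emph{Resistance ($\eta=1$).} Use the Dirichlet principle: $1/R = \inf\{\sum_e (\nabla f)^2 : f(s)=0,\ f(t)=1\}$. Let $f$ be a potential on $\mathcal G_{n+1}(p)$. Restrict it to the old vertices, and write $d_u$ for its difference across $e_u$. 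If $\xi_u=1$, the two series edges have energy at least $d_u^2/2$ by convexity. If $\xi_u=0$, the two parallel edges have energy $2d_u^2$. So the energy on $\mathcal G_{n+1}(p)$ is at least $\tfrac12$ times the energy of the restriction on $\mathcal G_n(p)$. Therefore $1/R_{n+1}\ge \tfrac12\cdot 1/R_n$, that is, $R_{n+1}(p)\ge R_n(p)/2$.
\end{itemize}

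An alternative is induction on the recursive decomposition \eqref{eq:unified-rde}, using monotonicity and homogeneity of $x+y$ and $\Pi_\eta$. The direct argument above avoids distributional subtleties, because it holds pathwise.

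For \eqref{eq:deterministic-bound}, take logarithms in \eqref{eq:adjacent-Z}: $X_{n+1}-X_n\in[-\eta\log 2,\log2]$ almost surely. Summing from $X_0=\log 1=0$ gives $-\eta n\log2\le X_n\le n\log2$. The increment bound then follows by taking expectations, which are finite by the same bounds, and using $\eta\le1$.

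The only mildly delicate step is the resistance lower bound. The argument needs the correct variational principle and the convexity estimate for series edges; everything else is routine.
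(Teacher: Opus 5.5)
You have a genuine gap in the resistance lower bound: the inequality you derive points the wrong way. Your distance bounds, your Thomson-principle upper bound $R_{n+1}(p)\le 2R_n(p)$, and the passage from \eqref{eq:adjacent-Z} to \eqref{eq:deterministic-bound} are fine.

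Take the Dirichlet step. Restricting an arbitrary potential $f$ on $\mathcal G_{n+1}(p)$ to the old vertices shows that its Dirichlet energy is at least half that of the restriction, hence at least $1/(2R_n)$. Taking the infimum over $f$ gives $1/R_{n+1}\ge 1/(2R_n)$. But this is equivalent to $R_{n+1}\le 2R_n$, not to $R_{n+1}\ge R_n/2$.

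So both of your resistance arguments prove the same upper bound. The lower bound $R_{n+1}\ge R_n/2$ is never established, and it is exactly what is needed for $X_n^{(1)}(p)\ge -n\log2$ and for the lower half of $|\E X_{n+1}-\E X_n|\le\log2$. The underlying issue is the quantifier structure. Bounding the energy of \emph{every} potential from below bounds $1/R$ from below, just as exhibiting \emph{one} flow bounds $R$ from above; both give upper bounds on $R$.

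To repair it, reverse the roles, in either of two ways:
\begin{itemize}
\item \emph{Exhibit one potential.} Take the minimizing potential $g$ on $\mathcal G_n(p)$ and set $f(z_u)=\tfrac12\bigl(g(x_u)+g(y_u)\bigr)$ at each new series vertex. With $d_u$ the difference of $g$ across $e_u$, a series gadget then costs $d_u^2/2$ and a parallel gadget $2d_u^2$. Hence $1/R_{n+1}\le 2/R_n$.
\item \emph{Bound every flow.} Project an arbitrary unit flow on $\mathcal G_{n+1}(p)$ to $\mathcal G_n(p)$. Conservation at the degree-two vertex $z_u$ forces equal currents on the two series edges. Parallel currents $a,b$ project to $a+b$, with $a^2+b^2\ge(a+b)^2/2$. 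So the fine flow's energy is at least half that of a unit flow on $\mathcal G_n(p)$, hence at least $R_n/2$.
\end{itemize}

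The paper sidesteps this bookkeeping. It notes that each two-edge gadget is equivalent to a single edge of resistance $1/2$ or $2$. Rayleigh monotonicity plus homogeneity then sandwich $R_{n+1}$ between $R_n/2$ and $2R_n$ in one step.
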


\begin{proof}
For distance, each old edge is replaced by a two-edge network whose
boundary-to-boundary distance is either $1$ or $2$.  Thus
$D_n(p)\le D_{n+1}(p)\le2D_n(p)$.  For resistance, every old unit edge is
replaced by a two-edge network of effective resistance $1/2$ or $2$.
Rayleigh monotonicity and homogeneity compare the new network with the old one
after all edge resistances are multiplied by $1/2$ or by $2$.  Together,
these comparisons prove \eqref{eq:adjacent-Z}, which immediately yields
\eqref{eq:deterministic-bound}.
\end{proof}

\begin{lemma}
\label{lem:first-moment-bounds}
For $\eta\in\{0,1\}$, $p\in[0,1]$, and $n\ge0$,
\begin{equation}
 \E Z_{n+1}^{(\eta)}(p)\ge2p\E Z_n^{(\eta)}(p),
 \qquad
 (2p)^n\le\E Z_n^{(\eta)}(p)\le2^n.
 \label{eq:first-moment-recursion}
\end{equation}
\end{lemma}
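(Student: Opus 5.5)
The plan is to condition on the root variable $\xi_\varnothing$ and use the unified recursion \eqref{eq:unified-rde} with independent copies $Z_n^{(\eta,1)}(p)$ and $Z_n^{(\eta,2)}(p)$ of $Z_n^{(\eta)}(p)$. The two bounds in \eqref{eq:first-moment-recursion} will then follow by induction from the one-step inequality, together with the upper half of \eqref{eq:adjacent-Z}.

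For the one-step inequality, we split according to $\xi_\varnothing$. On the series event, which has probability $p$, we have
\[
 Z_{n+1}^{(\eta)}(p)\overset{\mathrm d}{=}Z_n^{(\eta,1)}(p)+Z_n^{(\eta,2)}(p),
\]
and its conditional expectation is exactly $2\E Z_n^{(\eta)}(p)$. On the parallel event, $\Pi_\eta(x,y)\ge0$ for $x,y>0$, so this branch contributes a nonnegative amount. Taking expectations over $\xi_\varnothing$, which is independent of the two copies, gives
\[
 \E Z_{n+1}^{(\eta)}(p)\ge p\cdot2\E Z_n^{(\eta)}(p)+(1-p)\cdot0=2p\,\E Z_n^{(\eta)}(p).
\]
All quantities here are finite, since $Z_n^{(\eta)}(p)\le2^n$ almost surely by Lemma~\ref{lem:deterministic-bound}. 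No integrability issue therefore arises.

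For the moment bounds we note that $Z_0^{(\eta)}(p)=1$, since the graph is a single unit edge. Iterating the one-step inequality gives $\E Z_n^{(\eta)}(p)\ge(2p)^n$; when $p=0$ the bound reads $(2p)^0=1$ at $n=0$ and $0$ afterwards, which holds trivially. For the upper bound, iterating $Z_{n+1}^{(\eta)}(p)\le2Z_n^{(\eta)}(p)$ from \eqref{eq:adjacent-Z} gives $Z_n^{(\eta)}(p)\le2^n$ almost surely, and taking expectations yields $\E Z_n^{(\eta)}(p)\le2^n$.

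There is no real obstacle. The only point needing care is that the root choice is independent of the two copies, and this is stated in \eqref{eq:unified-rde}; it lets us compute the expectation of the series branch as $p$ times $2\E Z_n^{(\eta)}(p)$.
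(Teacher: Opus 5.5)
Your proposal is correct and follows essentially the same route as the paper. You take expectations in \eqref{eq:unified-rde}, drop the nonnegative parallel branch to get the one-step inequality, iterate it from $Z_0^{(\eta)}(p)=1$, and read off the upper bound $2^n$ from Lemma~\ref{lem:deterministic-bound}.
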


\begin{proof}
Taking expectations in \eqref{eq:unified-rde} gives the first inequality.
Its iteration, together with $Z_0^{(\eta)}(p)=1$ and
Lemma~\ref{lem:deterministic-bound}, gives the remaining bounds.
\end{proof}

We next record resistance duality.  For an
environment $\xi=(\xi_u)_{u\in\mathbb T}$, write $R_n(\xi)$ for the
resistance of the resulting depth-$n$ network and put
$\bar\xi=(1-\xi_u)_{u\in\mathbb T}$.  If $\xi$ has parameter $p$, then
$\bar\xi$ has parameter $1-p$; we use these two environments to couple
$R_n(p)$ and $R_n(1-p)$.  Complementing the environment exchanges series
and parallel at every internal node of the construction tree.  Since
$g_-^{(1)}(x,y)=-g_+(-x,-y)$, induction on the coupled construction
tree gives the following.

\begin{lemma}
\label{lem:resistance-duality}
Under this coupling, for every $p\in[0,1]$ and $n\ge0$,
\begin{equation}
 \log R_n(p)=-\log R_n(1-p)
 \qquad\text{almost surely}.
 \label{eq:resistance-duality}
\end{equation}
\end{lemma}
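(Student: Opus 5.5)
The plan is to argue by induction on $n$ along the coupled construction tree, proving a stronger statement that holds at every node of $\mathbb T$, not only at the root. For $u\in\mathbb T$ and $k\ge0$, let $R^{u}_k(\xi)$ be the effective resistance between the two endpoints of the depth-$k$ network grown from the single edge $e_u$ using the environment $(\xi_{uw})_{|w|<k}$. Thus $R_n(\xi)=R^{\varnothing}_n(\xi)$. The claim to prove for every $u$ and every $k\ge0$ is
\[
 \log R^u_k(\xi)=-\log R^u_k(\bar\xi).
\]
Applying this with $u=\varnothing$ and $k=n$ gives \eqref{eq:resistance-duality}. Because $\bar\xi$ has i.i.d.\ Bernoulli$(1-p)$ coordinates, $R_n(\bar\xi)$ has the law of $R_n(1-p)$, so this construction is a legitimate coupling of $R_n(p)$ and $R_n(1-p)$.

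The base case $k=0$ holds because both resistances equal $1$, so both logarithms vanish. For the inductive step, decompose at the root $u$. By the construction, the depth-$(k+1)$ network from $e_u$ consists of the depth-$k$ networks from $e_{u0}$ and $e_{u1}$, joined in series if $\xi_u=1$ and in parallel if $\xi_u=0$. These two subnetworks share only the endpoints $x_u,y_u$ in the parallel case, and only the midpoint $z_u$ in the series case. The usual series and parallel laws for effective resistance therefore give the exact, pathwise identity
\[
 \log R^u_{k+1}(\xi)=
 \begin{cases}
 g_+\bigl(\log R^{u0}_k(\xi),\log R^{u1}_k(\xi)\bigr),&\xi_u=1,\\
 g_-^{(1)}\bigl(\log R^{u0}_k(\xi),\log R^{u1}_k(\xi)\bigr),&\xi_u=0.
 \end{cases}
\]
The same identity holds for $\bar\xi$, except that the cases are swapped because $\bar\xi_u=1-\xi_u$.

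Now suppose $\xi_u=1$. Then $\bar\xi_u=0$. Write $a=\log R^{u0}_k(\bar\xi)$ and $b=\log R^{u1}_k(\bar\xi)$. The induction hypothesis gives $\log R^{u0}_k(\xi)=-a$ and $\log R^{u1}_k(\xi)=-b$. Therefore
\[
 \log R^u_{k+1}(\xi)=g_+(-a,-b)=-g_-^{(1)}(a,b)=-\log R^u_{k+1}(\bar\xi),
\]
where the middle equality is the identity $g_-^{(1)}(x,y)=-g_+(-x,-y)$. To verify that identity directly: $-g_+(-x,-y)=-\log(e^{-x}+e^{-y})$, which is the definition of $g_-^{(1)}(x,y)$. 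The case $\xi_u=0$ is symmetric: there $g_-^{(1)}(-a,-b)=-g_+(a,b)$, which is the same identity with $(x,y)$ replaced by $(-a,-b)$.

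The only step needing care is the pathwise (not merely distributional) series/parallel decomposition of the depth-$(k+1)$ network into the two depth-$k$ subnetworks indexed by $u0$ and $u1$. This follows from the simultaneous-replacement construction: each edge of the generation-$1$ network spawned by $e_u$ evolves using only the environment indexed by its own descendants. With that in place, the induction closes, and the statement holds almost surely (in fact for every environment).
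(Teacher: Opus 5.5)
Your proposal is correct and follows the paper's argument. The paper also proceeds by induction on the coupled construction tree, using that complementation swaps series and parallel at each node together with the identity $g_-^{(1)}(x,y)=-g_+(-x,-y)$; your strengthening to every subtree rooted at $u$, with the pathwise root decomposition, spells out the details the paper leaves implicit.
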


\subsection{Exact resistance CDF operator}
\label{sec:cdf-operator}

In this subsection, we consider only the resistance model and write
$g_-:=g_-^{(1)}$ and $X_n(p):=X_n^{(1)}(p)$.

Let $U$ and $V$ be independent random variables with common CDF $F$.  Denote
the CDFs of $g_+(U,V)$ and $g_-(U,V)$ by $S_F$ and $P_F$, respectively:
\[
 S_F(x):=\Pp\bigl(g_+(U,V)\le x\bigr),
 \qquad
 P_F(x):=\Pp\bigl(g_-(U,V)\le x\bigr).
\]
For $p\in[0,1]$, let $A$ be independent of $(U,V)$, take values in
$\{+,-\}$, and satisfy $\Pp(A=+)=p$ and $\Pp(A=-)=1-p$.  Define
\begin{equation}
 (\mathcal T_pF)(x):=\Pp\bigl(g_A(U,V)\le x\bigr)
 =pS_F(x)+(1-p)P_F(x).
 \label{eq:CDF-operator-definition}
\end{equation}
Thus $\mathcal T_p$ is the one-step CDF update for the resistance recursion.
In particular, if $F$ is the CDF of $X_n(p)$, then $\mathcal T_pF$ is the
CDF of $X_{n+1}(p)$.

We now give an explicit formula for this update when $F$ is absolutely
continuous.  Let $\rho$ be the density of $F$, and recall that
$h(r)=\log(1+e^{-r})$ for $r\ge0$.
Define \[
\begin{aligned}
I_-(F;x)
 &:=\int_0^\infty\int_0^{h(r)}
   \rho(x-s)\rho(x-s-r)\,\dd s\,\dd r,
 \\
 I_+(F;x)
 &:=\int_0^\infty\int_0^{h(r)}
   \rho(x+s)\rho(x+s+r)\,\dd s\,\dd r.
\end{aligned} \]

\begin{lemma}
\label{lem:exact-cdf}
If $F$ is absolutely continuous with density $\rho$, then $S_F$ and $P_F$
satisfy
\begin{equation}
 S_F(x)=F(x)^2-2I_-(F;x),
 \qquad
 P_F(x)=2F(x)-F(x)^2+2I_+(F;x).
 \label{eq:series-parallel-cdf}
\end{equation}
Consequently, if $p=1/2+\delta$, then
\begin{equation}
 \mathcal T_{1/2+\delta}F-F
 =I_+-I_--2\delta F(1-F)-2\delta(I_++I_-).
 \label{eq:exact-cdf-operator}
\end{equation}
\begin{samepage}
Moreover, the operator $\mathcal T_p$ preserves pointwise order and commutes
with translations.  More precisely, for every $p\in[0,1]$ and any CDFs $F$
and $G$, the following properties hold:
\begin{enumerate}
\item[(i)] \emph{Order preservation.}
\[
 F\le G\ \text{pointwise}
 \quad\Longrightarrow\quad
 \mathcal T_pF\le\mathcal T_pG\ \text{pointwise}.
\]
\item[(ii)] \emph{Translation covariance.}  For $a\in\R$, define
$(\tau_aF)(x):=F(x-a)$.  Then
\[
 \mathcal T_p(\tau_aF)=\tau_a(\mathcal T_pF).
\]
\end{enumerate}
\end{samepage}
\end{lemma}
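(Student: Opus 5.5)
Since $U$ and $V$ are i.i.d.\ with density $\rho$, the joint law is symmetric. The plan is to split according to which of the two is larger and then use the max--min representation \eqref{eq:log-series-gate}.

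\emph{Series CDF.} By \eqref{eq:log-series-gate}, $g_+(U,V)=(U\vee V)+h(|U-V|)$ and $h\ge0$. Hence $\{g_+(U,V)\le x\}\subseteq\{U\vee V\le x\}$, and the latter event has probability $F(x)^2$. It remains to compute the probability of the exceptional event
\[
\{U\vee V\le x<(U\vee V)+h(|U-V|)\}.
\]
By symmetry, and since ties have probability zero, this probability is twice the contribution of the region $U>V$. On that region, write $U=x-s$ and $V=x-s-r$ with $r>0$. The exceptional event becomes $0\le s<h(r)$. Changing variables, the Jacobian is $1$, and the probability is
\[
2\int_0^\infty\int_0^{h(r)}\rho(x-s)\rho(x-s-r)\,\dd s\,\dd r=2I_-(F;x).
\]
This gives $S_F=F^2-2I_-$.

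\emph{Parallel CDF.} Here $g_-(U,V)=(U\wedge V)-h(|U-V|)\le U\wedge V$. We have $\Pp(U\wedge V\le x)=1-(1-F(x))^2=2F-F^2$. The extra event is $\{U\wedge V-h\le x<U\wedge V\}$. On $U<V$, parametrize $U=x+s$ and $V=x+s+r$. The extra event becomes $0<s\le h(r)$, which yields $2I_+$. Boundary sets are null, so strict versus non-strict inequalities do not matter.

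\emph{Combining.} For $p=\tfrac12+\delta$,
\[
\mathcal T_pF=pS_F+(1-p)P_F.
\]
Expand this with the two formulas and subtract $F$. The $\tfrac12$ parts combine to $\tfrac12(F^2+2F-F^2)=F$, which cancels, leaving $I_+-I_-$. The $\delta$ parts give
\[
\delta(F^2-2I_--2F+F^2-2I_+)=-2\delta F(1-F)-2\delta(I_++I_-).
\]
This is \eqref{eq:exact-cdf-operator}. The only point needing care is the bookkeeping of signs and factors of $2$.

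\emph{Order preservation.} This part must hold for arbitrary CDFs, not only absolutely continuous ones, so I would use a coupling argument rather than the formulas. Suppose $F\le G$ pointwise, i.e.\ the law with CDF $G$ is stochastically smaller. Take $W$ uniform on $(0,1)$ and use the quantile coupling: with $F^{-1}(w)=\inf\{x:F(x)\ge w\}$, we get $G^{-1}(W)\le F^{-1}(W)$. Build such couplings independently for $U,V$, giving pairs $(U',V')$ with law $G\otimes G$ and $U'\le U$, $V'\le V$. Then:
\begin{itemize}
\item since $g_\pm$ are nondecreasing in each coordinate, $g_A(U',V')\le g_A(U,V)$, with the same independent choice $A$ used for both;
\item therefore $\mathcal T_pG(x)=\Pp(g_A(U',V')\le x)\ge\Pp(g_A(U,V)\le x)=\mathcal T_pF(x)$.
\end{itemize}

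\emph{Translation covariance.} If $U,V$ have CDF $F$, then $U+a,V+a$ have CDF $\tau_aF$. By \eqref{eq:gate-translation}, $g_A(U+a,V+a)=a+g_A(U,V)$, so
\[
\mathcal T_p(\tau_aF)(x)=\Pp(g_A(U,V)\le x-a)=\tau_a(\mathcal T_pF)(x).
\]

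I expect no real obstacle. The only delicate step is the change of variables in the exceptional events, namely getting the correct orientation $x-s$ versus $x+s$ and the factor $2$ from symmetry.
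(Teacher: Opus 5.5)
Your proposal is correct and matches the paper's proof essentially step for step. It uses the same max--min representation, the same Jacobian-one change of variables $r=|U-V|$ with $s$ the gap between $x$ and $U\vee V$ (resp.\ $U\wedge V$), and the same algebra for \eqref{eq:exact-cdf-operator}. It also uses the monotone (quantile) coupling for order preservation and the translation covariance of $g_\pm$, exactly as the paper does. The only difference is that you write out the parallel case explicitly, which the paper leaves as ``the same argument.''
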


\begin{proof}
By \eqref{eq:log-series-gate},
\[
 g_+(U,V)=(U\vee V)+h(|U-V|).
\]
In particular, $g_+(U,V)\ge U\vee V$.  Since $U$ and $V$ have densities,
the diagonal has probability zero, and
symmetry between $U$ and $V$ gives
\begin{align*}
 S_F(x)
 &=F(x)^2
   -2\Pp\bigl(V\le U\le x,\ U+h(U-V)>x\bigr).
\end{align*}
On the event inside the last probability, make the change of variables
\[
 r=U-V\ge0,
 \qquad
 s=x-U\ge0.
\]
Its Jacobian has absolute value one, and the crossing condition is precisely
$s<h(r)$.  Therefore
\[
\begin{aligned}
 \Pp\bigl(V\le U\le x,\ U+h(U-V)>x\bigr)
 &=\int_0^\infty\int_0^{h(r)}
   \rho(x-s)\rho(x-s-r)\,\dd s\,\dd r\\
 &=I_-(F;x).
\end{aligned}
\]
This proves the formula for $S_F$.

The formula for $P_F$ follows by the same argument, using
\eqref{eq:log-series-gate}.  This proves
\eqref{eq:series-parallel-cdf}.

Substituting these two formulas into
\eqref{eq:CDF-operator-definition}, with $p=1/2+\delta$, yields
\begin{align*}
 {\mathcal T_{1/2+\delta}F-F}
 &{=(\tfrac12+\delta)(F^2-2I_-)
   +(\tfrac12-\delta)(2F-F^2+2I_+)-F}\\
 &{=I_+-I_--2\delta F(1-F)-2\delta(I_++I_-).}
\end{align*}
This is \eqref{eq:exact-cdf-operator}.

It remains to verify the two structural properties of the operator.  Suppose
that $F\le G$ pointwise.  By the standard monotone coupling, there exist
$U_F,V_F\stackrel{\mathrm{i.i.d.}}{\sim}F$ and
$U_G,V_G\stackrel{\mathrm{i.i.d.}}{\sim}G$ on a common probability space such
that $U_F\ge U_G$ and $V_F\ge V_G$ almost surely.  The coordinatewise
monotonicity of $g_+$ and $g_-$ gives
\[
 S_F\le S_G,
 \qquad
 P_F\le P_G.
\]
Hence $\mathcal T_pF\le\mathcal T_pG$, proving preservation of pointwise
order.  Finally, $\tau_aF$ is the CDF of $U+a$.  Translation covariance
\eqref{eq:gate-translation} gives
\[
 \bigl(\mathcal T_p(\tau_aF)\bigr)(x)
 =(\mathcal T_pF)(x-a)
 =\bigl(\tau_a(\mathcal T_pF)\bigr)(x),
\]
so $\mathcal T_p$ commutes with translations.
\end{proof}

\section{Proof of Theorems~\ref{thm:logarithmic-speeds}
and~\ref{thm:first-moment-logarithmic-rates}}
\label{sec:ballistic-proof}

In this section, we prove Theorems~\ref{thm:logarithmic-speeds}
and~\ref{thm:first-moment-logarithmic-rates}.
Section~\ref{sec:first-moment-exponent} establishes the existence of the
first-moment logarithmic rates asserted in
Theorem~\ref{thm:first-moment-logarithmic-rates} via first-moment
submultiplicativity; see Lemma~\ref{lem:first-moment-submultiplicativity}.
Section~\ref{sec:annealed-to-typical} proves the assertions of both theorems
in the supercritical regime $p>1/2$ by controlling the Jensen gap defined in
\eqref{eq:jensen-gap}. Section~\ref{sec:remaining-ranges} completes the
proofs by treating $p\le1/2$.

Throughout this section, fix $\eta\in\{0,1\}$ and $p\in[0,1]$.  For
notational convenience, we suppress these parameters whenever no confusion
can arise and write
\[
 Z_n:=Z_n^{(\eta)}(p),
 \qquad
 X_n:=X_n^{(\eta)}(p).
\]

\subsection{First-moment logarithmic rate}
\label{sec:first-moment-exponent}

\begin{lemma}
\label{lem:first-moment-submultiplicativity}
For each $\eta\in\{0,1\}$ and all $n,r\ge0$,
\begin{equation}
 \E Z_{n+r}\le \E Z_n\,\E Z_r.
 \label{eq:first-moment-submultiplicativity}
\end{equation}
Consequently, the limit
\begin{equation}
 \gamma=\gamma^{(\eta)}(p):=\lim_{n\to\infty}\frac1n\log\E Z_n
 =\inf_{n\ge1}\frac1n\log\E Z_n
 \label{eq:annealed-fekete}
\end{equation}
exists in $(-\infty,\infty)$.
\end{lemma}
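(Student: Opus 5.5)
The plan is to prove \eqref{eq:first-moment-submultiplicativity} by conditioning on $\mathcal F_n$ and using Proposition~\ref{prop:conditional-refinement}. Once it holds, $a_n:=\log\E Z_n$ is subadditive, and Fekete's lemma gives \eqref{eq:annealed-fekete}. The limit is finite because Lemma~\ref{lem:first-moment-bounds} gives $a_n\ge n\log(2p)$, and for $p=0$ we have $Z_n^{(\eta)}(0)\ge 2^{-\eta n}$ deterministically by Lemma~\ref{lem:deterministic-bound}. So in all cases $a_n/n\ge-\log2$, and the infimum is not $-\infty$. Also $a_n\le n\log2<\infty$.

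For distance ($\eta=0$), fix a shortest $s$--$t$ path $\pi$ in $\mathcal G_n(p)$. Choose it as an $\mathcal F_n$-measurable function, for instance by a deterministic tie-breaking rule. It has $D_n$ edges. By Proposition~\ref{prop:conditional-refinement}, each edge $e\in\pi$ is replaced in $\mathcal G_{n+r}(p)$ by an independent copy $\mathcal G_r^{(e)}$ whose two boundary vertices are the endpoints of $e$. Concatenating shortest boundary-to-boundary paths across these copies gives an $s$--$t$ path. Hence
\[
 D_{n+r}\le\sum_{e\in\pi}D_r^{(e)},
 \qquad
 \E[D_{n+r}\mid\mathcal F_n]\le D_n\,\E D_r.
\]
Taking expectations yields the claim.

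For resistance ($\eta=1$), let $\theta$ be the unit current flow from $s$ to $t$ on $\mathcal G_n(p)$, so that $R_n=\sum_e\theta_e^2$ by Thomson's principle. For each edge $e$, let $\phi^{(e)}$ be the unit current flow across $\mathcal G_r^{(e)}$, oriented consistently with $e$, with energy $R_r^{(e)}$. Define a flow on $\mathcal G_{n+r}(p)$ by placing $\theta_e\phi^{(e)}$ inside each copy. Kirchhoff's node law holds at internal vertices of each copy. At the glued vertices, the net outflow equals the net outflow of $\theta$, because copy $e$ carries total flux $\theta_e$ between its endpoints. So this is a unit $s$--$t$ flow, and its energy is $\sum_e\theta_e^2R_r^{(e)}$. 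Thomson's principle then gives
\[
 R_{n+r}\le\sum_e\theta_e^2R_r^{(e)}.
\]
Since $\theta$ is $\mathcal F_n$-measurable and the $R_r^{(e)}$ are independent of $\mathcal F_n$ with mean $\E R_r$, we get $\E[R_{n+r}\mid\mathcal F_n]\le R_n\E R_r$. Taking expectations completes this case.

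The only delicate point is the bookkeeping in the resistance case: the flow must be well defined when $\theta_e$ is negative, which is handled by reversing orientation, and the glued flow must be checked to satisfy conservation at the shared vertices. The rest is routine.
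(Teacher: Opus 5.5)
Your proposal is correct and follows the paper's proof step for step: concatenate shortest paths in the copies along a shortest path of $\mathcal G_n(p)$ for distance, glue the scaled flows $\theta_e\phi^{(e)}$ and apply Thomson's principle for resistance, then condition on $\mathcal F_n$ and apply Fekete's lemma. One small slip: for $0<p<1/4$, the bound $a_n\ge n\log(2p)$ does not give $a_n/n\ge-\log2$, though it still gives finiteness; in any case Lemma~\ref{lem:deterministic-bound} yields $Z_n\ge2^{-\eta n}$ for every $p$, which is the bound the paper uses.
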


For distance, Chen, Derrida, Duquesne, and
Shi~\cite[the argument leading to equation~(1.2)]{ChenDerridaDuquesneShi2026}
established \eqref{eq:first-moment-submultiplicativity} and
\eqref{eq:annealed-fekete}.  We reproduce the proof for completeness.

\begin{proof}
Recall the conditional refinement property in
Proposition~\ref{prop:conditional-refinement}.  Conditionally on
$\mathcal F_n$, each edge of $\mathcal G_n$ is replaced in
$\mathcal G_{n+r}$ by an independent depth-$r$ copy of $\mathcal G_r$.

For distance, fix a shortest path in $\mathcal G_n$, using a deterministic
tie-breaking rule.  In each depth-$r$ copy replacing an edge of this path,
choose a shortest path between its boundary vertices.  Concatenating these
paths gives a path across $\mathcal G_{n+r}$.  Conditional on $\mathcal F_n$,
its expected length is $(\E D_r)D_n$, and hence
\[
 \E[D_{n+r}\mid\mathcal F_n]\le (\E D_r)D_n.
\]
Taking expectations proves \eqref{eq:first-moment-submultiplicativity} for
distance.

For resistance, we use Thomson's principle in its unit-resistance form:
\[
 R_{\mathrm{eff}}(G)
 =\inf_{\vartheta}\sum_{e\in E(G)}\vartheta_e^2,
\]
where the infimum is over unit flows between the boundary vertices of $G$.  Let
$(\theta_e)$ be a minimizing unit flow in $\mathcal G_n$. Such a flow
exists: a minimizing sequence has bounded energy, so its finitely many edge
currents have a convergent subsequence whose limit is again a unit flow. In the
depth-$r$ copy replacing $e$, take a minimizing unit flow
and multiply it by $\theta_e$.
Combining these flows gives a unit flow in $\mathcal G_{n+r}$ with
energy $\sum_e\theta_e^2R_r^{(e)}$.  Therefore,
$R_{n+r}\le\sum_e\theta_e^2R_r^{(e)}$.
Taking conditional expectation gives
\[
 \E[R_{n+r}\mid\mathcal F_n]
 \le (\E R_r)\sum_e\theta_e^2
 = (\E R_r)R_n.
\]
Taking expectations proves \eqref{eq:first-moment-submultiplicativity} for
resistance.

Fekete's subadditive lemma, together with Lemma~\ref{lem:deterministic-bound}, proves
\eqref{eq:annealed-fekete}.
\end{proof}

\subsection{From first-moment logarithmic rate to logarithmic speed}
\label{sec:annealed-to-typical}

Throughout this subsection, we assume that $p>1/2$.  Define the Jensen gap by
\begin{equation}
\label{eq:jensen-gap}
 J_n:=\log\E Z_n-\E X_n\ge0.
\end{equation}
We shall first prove that $J_n$ is uniformly bounded, showing that $\E X_n$
and $\log\E Z_n$ have the same limiting rate after division by $n$, and then
use Lemma~\ref{lem:center-tracking} to control the centered logarithmic
fluctuations.  Together, these estimates yield the $L^1$ convergence in
\eqref{eq:full-range-L1} and the almost-sure convergence.

\begin{proposition}
\label{prop:bounded-Jensen-gap}
For $p>1/2$, the Jensen gaps are uniformly bounded:
\[
 \sup_{n\ge0}J_n<\infty.
\]
Consequently,
\begin{equation}
 \lim_{n\to\infty}\frac1n\E X_n
 =\lim_{n\to\infty}\frac1n\log\E Z_n
 =\gamma.
 \label{eq:mean-log-limit}
\end{equation}
\end{proposition}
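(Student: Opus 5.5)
The outline in the introduction indicates the intended route. Write $M_n:=\E Z_n$ and let $E_n:=\{Z_n\ge M_n/2\}$. On $E_n$ we have $\log M_n\le X_n+\log 2$, so
\[
 J_n=\log M_n-\E X_n\le \log2+(X_n-\E X_n)^-\quad\text{on }E_n .
\]
Taking the expectation over $E_n$ and dividing by $\Pp(E_n)$ gives
\begin{equation*}
 J_n\le \log2+\frac{\E|X_n-\E X_n|}{\Pp(E_n)} .
\end{equation*}
Since $J_n$ is deterministic, the bound holds whenever $E_n$ has positive probability. The proof therefore reduces to two uniform estimates.

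\textbf{(a)} $\sup_n\E|X_n-\E X_n|<\infty$. This is the content of Lemma~\ref{lem:center-tracking}.

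\textbf{(b)} $\inf_n\Pp(E_n)>0$. By Paley--Zygmund, $\Pp(Z_n\ge M_n/2)\ge \tfrac14 M_n^2/\E Z_n^2$, so it suffices that the normalized second moments $\E Z_n^2/M_n^2$ are bounded. This is Lemma~\ref{lem:normalized-L2}.

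For (b) I would take second moments in the recursion \eqref{eq:unified-rde}. Write $m_n=M_n$ and $s_n=\E Z_n^2$. Using $\Pi_\eta(x,y)\le (x+y)/2$, we get
\[
 s_{n+1}\le p(2s_n+2m_n^2)+(1-p)\tfrac14(2s_n+2m_n^2)=\Bigl(2p+\tfrac{1-p}{2}\Bigr)(s_n+m_n^2).
\]
Combined with $m_{n+1}\ge 2p\,m_n$ (Lemma~\ref{lem:first-moment-bounds}), the ratio $\rho_n=s_n/m_n^2$ satisfies
\[
 \rho_{n+1}\le \frac{2p+(1-p)/2}{4p^2}(\rho_n+1).
\]
Unfortunately the coefficient $\frac{3p+1}{8p^2}$ is less than $1$ only when $p>(3+\sqrt{41})/16\approx0.59$, so this crude bound fails close to $1/2$. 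Near $1/2$ I would use the mean drift of \eqref{eq:drift-identity} instead. For $p>1/2$ the logarithms grow at rate at least $\log(2p)>0$, so I would work at a coarser scale: pick $r$ large enough that $\E D_r$ is large relative to the spread, and apply the refinement property (Proposition~\ref{prop:conditional-refinement}) over blocks of $r$ generations. Along a single series path, the sum of many independent copies has concentrated variance. This is the step I expect to be the main obstacle, and I would defer to Lemma~\ref{lem:normalized-L2} as stated in the outline.

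For (a), translation covariance lets me bound the spread of $X_{n+1}$ by that of $X_n$ through $g_\pm(x,y)-\tfrac{x+y}2$, which is controlled by $\tfrac12|x-y|+\log 2$. Then $V_n:=\E|X_n-X_n'|$ satisfies a contraction-type recursion, where the contraction comes from the max/min averaging for $p\neq1/2$. Again I would cite Lemma~\ref{lem:center-tracking}.

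With $\sup_nJ_n<\infty$ established, $n^{-1}\E X_n$ and $n^{-1}\log\E Z_n$ differ by $O(1/n)$, and \eqref{eq:mean-log-limit} follows from Lemma~\ref{lem:first-moment-submultiplicativity}.
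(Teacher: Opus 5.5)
Your argument is the paper's proof: the Paley--Zygmund event $\{Z_n\ge\E Z_n/2\}$ with the second-moment bound of Lemma~\ref{lem:normalized-L2}, combined with the uniform fluctuation bound of Lemma~\ref{lem:center-tracking}, gives $J_n\le\log2+\E|X_n-\E X_n|/\Pp(E_n)$ uniformly in $n$. One slip: the pointwise bound on $E_n$ should read $J_n\le\log2+(X_n-\E X_n)$, which involves the positive part rather than the negative part, although your averaged inequality is correct as written.

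The obstacle you anticipated in (b) does not arise. The paper bounds $\Pi_\eta(x,y)^2\le4^{-\eta}xy$ instead of using $\Pi_\eta\le(x+y)/2$. By independence the parallel branch then contributes only $4^{-\eta}(\E Z_n)^2$, and the recursion $K_{n+1}\le K_n/(2p)+\bigl(2p+(1-p)4^{-\eta}\bigr)/(4p^2)$ is a contraction for every $p>1/2$, so no block argument is needed.
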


We defer the proof of Proposition~\ref{prop:bounded-Jensen-gap}.  Assuming it,
we next prove Theorem~\ref{thm:logarithmic-speeds} for $p>1/2$.  We begin with
a preliminary lemma.

\begin{lemma}
\label{lem:center-tracking}
For $p>1/2$, let $X_n'$ be an independent copy of $X_n$.  With
$C:=\log2/(p-1/2)$, we have
\begin{equation}
 \sup_{n\ge0}\E|X_n-X_n'|\le C,
 \qquad
 \sup_{n\ge0}\E|X_n-\E X_n|\le C.
 \label{eq:uniform-width}
\end{equation}
Consequently,
\begin{equation}
 \frac{X_n-\E X_n}{n}\longrightarrow0
 \qquad\text{a.s. and in }L^1.
\label{eq:center-tracking}
\end{equation}
\end{lemma}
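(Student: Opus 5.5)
Our plan is to read the uniform width bound directly off the drift identity \eqref{eq:drift-identity} and then upgrade it to the limit statements with the conditional refinement property. Write $h\ge0$. For $p>1/2$ the coefficient $p-\eta(1-p)$ equals $p\ge0$ when $\eta=0$, and equals $2p-1>0$ when $\eta=1$. In both cases the second term in \eqref{eq:drift-identity} is nonnegative, so
\[
 \Bigl(p-\tfrac12\Bigr)\E|X_n-X_n'|\le \E X_{n+1}-\E X_n\le\log2 ,
\]
where the last inequality is Lemma~\ref{lem:deterministic-bound}. This gives $\E|X_n-X_n'|\le C$ for every $n$. The second bound in \eqref{eq:uniform-width} follows by Jensen, conditioning on $X_n$:
\[
 \E|X_n-\E X_n|=\E\bigl|\E[X_n-X_n'\mid X_n]\bigr|\le\E|X_n-X_n'|.
\]

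The $L^1$ part of \eqref{eq:center-tracking} is then immediate, since $\E|X_n-\E X_n|/n\le C/n$. For the almost sure part, Markov's inequality alone gives only $\Pp(|X_n-\E X_n|>\varepsilon n)\le C/(\varepsilon n)$, which is not summable. Two fixes are available.

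The first fix is to use a subsequence. Along $n_k=k^2$ the bound $C/(\varepsilon k^2)$ is summable, so Borel--Cantelli gives $(X_{n_k}-\E X_{n_k})/n_k\to0$ almost surely. To fill the gaps, Lemma~\ref{lem:deterministic-bound} gives $|X_{n+1}-X_n|\le\log2$ almost surely, and the same bound holds for the means. For $n_k\le n<n_{k+1}$ this yields
\[
 |X_n-\E X_n|\le|X_{n_k}-\E X_{n_k}|+2(n-n_k)\log2 .
\]
But $n-n_k\le 2k+1$, which is not $o(n)$ relative to\ldots{} in fact $2k+1=O(\sqrt n)=o(n)$. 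So this interpolation works and settles the almost sure claim.

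The second fix, which we do not need, would be to use the refinement structure to obtain exponential concentration.

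The main potential obstacle is therefore just checking the sign of $p-\eta(1-p)$ and confirming that the interpolation error is $o(n)$. Both are verified above, so we expect no real difficulty.
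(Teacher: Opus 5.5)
Your proof is correct and follows the paper's argument essentially step for step: the nonnegative sign of $p-\eta(1-p)$ in the drift identity, combined with $|\E X_{n+1}-\E X_n|\le\log2$, gives the width bound; conditional Jensen gives the centered bound; and Markov's inequality with Borel--Cantelli along $n=k^2$ (taken over a countable family of $\varepsilon$), plus the $O(\sqrt n)$ interpolation from Lemma~\ref{lem:deterministic-bound}, gives the almost-sure statement. Your opening sentence says you will use the conditional refinement property, but the argument never does, so you can drop that reference along with the aside about exponential concentration.
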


\begin{proof}
Since $p>1/2$, the coefficient of the $h$ term in the drift identity
\eqref{eq:drift-identity} is nonnegative: it equals $p$ for distance and
$2p-1$ for resistance.  Combining that identity with the adjacent-generation
bound in Lemma~\ref{lem:deterministic-bound}, we obtain
\[
 0\le\left(p-\frac12\right)\E|X_n-X_n'|
 \le \E X_{n+1}-\E X_n\le\log2,
\]
which proves the first bound in \eqref{eq:uniform-width}.  Conditional Jensen
gives
\[
 \E|X_n-\E X_n|
 =\E\left|X_n-\E[X_n'\mid X_n]\right|
 \le\E|X_n-X_n'|,
\]
which proves the second bound.  The $L^1$ convergence in
\eqref{eq:center-tracking} follows immediately.

For the almost-sure statement, Markov's inequality and
\eqref{eq:uniform-width} give, for every $j\ge1$,
\[
 \sum_{k\ge1}\Pp\left\{
  |X_{k^2}-\E X_{k^2}|>\frac{k^2}{j}
 \right\}
 \le jC\sum_{k\ge1}\frac1{k^2}<\infty.
\]
Thus Borel--Cantelli yields
\[
 \frac{X_{k^2}-\E X_{k^2}}{k^2}\longrightarrow0
 \qquad\text{almost surely}.
\]
If $k^2\le n\le(k+1)^2$, Lemma~\ref{lem:deterministic-bound} gives
\[
 \big|(X_n-\E X_n)-(X_{k^2}-\E X_{k^2})\big|
 \le2(2k+1)\log2.
\]
Dividing by $n$ completes the proof.
\end{proof}

\begin{proof}[Proof of Theorems~\ref{thm:logarithmic-speeds}
and~\ref{thm:first-moment-logarithmic-rates} for $p>1/2$]
Section~\ref{sec:first-moment-exponent} establishes the existence of the
first-moment logarithmic rates $\gamma^{(0)}(p)=\gamma_D(p)$ and
$\gamma^{(1)}(p)=\gamma_R(p)$.
Proposition~\ref{prop:bounded-Jensen-gap} and
Lemma~\ref{lem:center-tracking} then imply
\begin{equation}
 \frac{X_n}{n}\longrightarrow \gamma=\gamma^{(\eta)}(p)
 \qquad\text{a.s. and in }L^1.
\label{eq:fixed-supercritical-convergence}
\end{equation}
Setting $\eta=0$ and $\eta=1$ in
\eqref{eq:fixed-supercritical-convergence} proves that
$\gamma_D(p)=v_D(p)$ and $\gamma_R(p)=v_R(p)$, respectively, for $p>1/2$,
which proves
\eqref{eq:supercritical-resistance-identification} in this range.  Finally,
\eqref{eq:first-moment-recursion} and
\eqref{eq:annealed-fekete} prove
\eqref{eq:supercritical-speed-bounds}.
\end{proof}

We now prove Proposition~\ref{prop:bounded-Jensen-gap}.  The idea is as follows.
We first use the
second-moment method to show that $Z_n\ge\E Z_n/2$ with uniformly positive
probability.  On this
event, we have $J_n\le X_n-\E X_n+\log2$; since its probability is bounded away from
zero, the second bound in \eqref{eq:uniform-width} yields the desired uniform
bound on $J_n$.

We begin by bounding the second-moment ratio.  Set
\[
 K_n:=\frac{\E Z_n^2}{(\E Z_n)^2}.
\]

\begin{lemma}
\label{lem:normalized-L2}
For $p>1/2$,
\begin{equation*}
\sup_{n\ge0}K_n
 \le K
 :=\frac{2p+(1-p)4^{-\eta}}{2p(2p-1)}.
\end{equation*}
\end{lemma}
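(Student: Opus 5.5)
The plan is to derive a one-step recursive inequality for $K_n$ from the marginal recursion \eqref{eq:unified-rde}, and then close it by induction using the first-moment lower bound \eqref{eq:first-moment-recursion}.

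\textbf{Step 1: second-moment recursion.} Write $Z_n,Z_n'$ for the two independent copies in \eqref{eq:unified-rde}.
\begin{itemize}
\item In the series case,
\[
\E(Z_n+Z_n')^2=2\E Z_n^2+2(\E Z_n)^2 .
\]
\item In the parallel case with $\eta=0$, we use $(Z_n\wedge Z_n')^2\le Z_nZ_n'$.
\item In the parallel case with $\eta=1$, we use $x+y\ge 2\sqrt{xy}$, which gives
\[
\Bigl(\frac{Z_nZ_n'}{Z_n+Z_n'}\Bigr)^2\le \frac{Z_nZ_n'}{4}.
\]
\end{itemize}
In both parallel cases, $\E\,\Pi_\eta(Z_n,Z_n')^2\le 4^{-\eta}(\E Z_n)^2$ by independence. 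Combining the two cases with weights $p$ and $1-p$ yields
\[
\E Z_{n+1}^2\le 2p\,\E Z_n^2+\bigl(2p+(1-p)4^{-\eta}\bigr)(\E Z_n)^2 .
\]

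\textbf{Step 2: normalization.} Lemma~\ref{lem:first-moment-bounds} gives $\E Z_{n+1}\ge 2p\,\E Z_n>0$. Dividing the Step 1 bound by $(\E Z_{n+1})^2\ge 4p^2(\E Z_n)^2$, and setting $c:=2p+(1-p)4^{-\eta}$, we get
\[
K_{n+1}\le \frac{1}{2p}K_n+\frac{c}{4p^2}.
\]
Since $p>1/2$, the map $k\mapsto k/(2p)+c/(4p^2)$ is a contraction. Its fixed point solves $k(1-1/(2p))=c/(4p^2)$, that is,
\[
k=\frac{c}{2p(2p-1)}=K .
\]

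\textbf{Step 3: induction.} The map in Step 2 is nondecreasing and fixes $K$, so $K_n\le K$ implies $K_{n+1}\le K$. It therefore suffices to check the base case $K_0=1\le K$. This holds because $c\ge 2p\ge 2p(2p-1)$.

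The only point requiring care is Step 1's parallel bound for resistance, where the factor $4^{-\eta}$ must come from the elementary inequality $(xy/(x+y))^2\le xy/4$. The rest is routine algebra, so I do not expect a genuine obstacle.
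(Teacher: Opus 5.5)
Your proposal is correct and follows the paper's proof essentially line for line. It uses the same second-moment recursion via $\Pi_0(x,y)^2\le xy$ and $\Pi_1(x,y)^2\le xy/4$, the same normalization by $(\E Z_{n+1})^2\ge 4p^2(\E Z_n)^2$, and the same affine-map induction from $K_0=1\le K$ (your explicit checks of the base case and of the map's monotonicity just spell out what the paper leaves implicit).
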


\begin{proof}
Let $Z_n'$ be an independent copy of $Z_n$.  We have
\[
 \Pi_0(Z_n,Z_n')^2\le Z_nZ_n',
 \qquad
 \Pi_1(Z_n,Z_n')^2\le\frac{Z_nZ_n'}{4}.
\]
Using independence and
$\E(Z_n+Z_n')^2=2\E Z_n^2+2(\E Z_n)^2$, we obtain
\begin{align*}
    \E Z_{n+1}^2=p\E(Z_n+Z_n')^2+(1-p)\E\Pi_\eta(Z_n,Z_n')^2
    \le 2p\E Z_n^2+\{2p+(1-p)4^{-\eta}\}(\E Z_n)^2.
\end{align*}
After division by $(\E Z_{n+1})^2$ and using
\eqref{eq:first-moment-recursion},
\begin{equation}
 K_{n+1}
 \le\frac1{2p}K_n
 +\frac{2p+(1-p)4^{-\eta}}{4p^2}.
 \label{eq:normalized-L2-recursion}
\end{equation}
The affine map on the right-hand side of
\eqref{eq:normalized-L2-recursion} has slope $1/(2p)<1$ and fixed point $K$.
Since $K_0=1\le K$, induction gives $K_n\le K$ for every $n\ge0$.
\end{proof}

\begin{proof}[Proof of Proposition~\ref{prop:bounded-Jensen-gap}]
By Lemma~\ref{lem:normalized-L2} and the Paley--Zygmund inequality at
threshold $1/2$,
\begin{equation}
 \Pp\left\{Z_n\ge\frac{\E Z_n}{2}\right\}\ge\frac1{4K}.
 \label{eq:Paley-Zygmund}
\end{equation}
Observe that on the event in \eqref{eq:Paley-Zygmund},
\begin{equation}
 X_n-\E X_n\ge\log(\E Z_n/2)-\E X_n=J_n-\log2.
 \label{eq:jensen-event-comparison}
\end{equation}
Consequently, by \eqref{eq:Paley-Zygmund},
\[
\begin{aligned}
 \frac{(J_n-\log2)_+}{4K}
 &\le\E\left[(J_n-\log2)_+
   \ind_{\{Z_n\ge\E Z_n/2\}}\right]
 \\ &\le\E\left[|X_n-\E X_n|
   \ind_{\{Z_n\ge\E Z_n/2\}}\right]
 \le\E|X_n-\E X_n|\le C.
\end{aligned}
\]
Therefore,
\begin{equation}
 0\le J_n\le\log2+4KC.
 \label{eq:Jensen-gap-bound}
\end{equation}

Combining Lemma~\ref{lem:first-moment-submultiplicativity} with
\eqref{eq:Jensen-gap-bound},
\[
 \frac{\E X_n}{n}
 =\frac{\log\E Z_n}{n}-\frac{J_n}{n}
 \longrightarrow \gamma,
\]
which proves \eqref{eq:mean-log-limit} and completes the proof.
\end{proof}

\subsection{Remaining parameter ranges}
\label{sec:remaining-ranges}

\begin{proof}[Proof of Theorem~\ref{thm:logarithmic-speeds}
for $p\le 1/2$]
For distance, Chen, Derrida, Duquesne, and
Shi~\cite{ChenDerridaDuquesneShi2026} showed that, for $p\le1/2$, its
first-moment logarithmic rate vanishes, that is,
\begin{equation}
 \lim_{n\to\infty}\frac1n\log\E D_n(p)=0.
 \label{eq:distance-nonpositive-annealed-speed}
\end{equation}
Since $D_n(p)\ge1$,
Jensen's inequality gives
\[
 0\le \frac1n\E\log D_n(p)
 \le \frac1n\log\E D_n(p)\longrightarrow0.
\]
Thus $n^{-1}\log D_n(p)\to0$ in $L^1$.
The convergence also holds almost surely.  Indeed, for every
$\varepsilon>0$,
\eqref{eq:distance-nonpositive-annealed-speed} gives, for all sufficiently
large $n$,
\[
 \Pp\{\log D_n(p)>\varepsilon n\}
 \le e^{-\varepsilon n}\E D_n(p)
 \le e^{-\varepsilon n/2}.
\]
Borel--Cantelli and $D_n(p)\ge1$ prove that
$n^{-1}\log D_n(p)\to0$ almost surely.

For $p<1/2$, define $v_R(p)=-v_R(1-p)$.  Since $1-p>1/2$,
\eqref{eq:fixed-supercritical-convergence} at parameter $1-p$,
together with Lemma~\ref{lem:resistance-duality}, gives
\[
 \frac1n\log R_n(p)\longrightarrow-v_R(1-p)=v_R(p)
\qquad\text{a.s. and in }L^1.
\]

At $p=1/2$, let $\xi$ be the parameter-$1/2$ environment and let $\bar\xi$ be its
complement.  Thomson's principle applied to a unit flow along a shortest path,
together with \eqref{eq:resistance-duality}, gives
\[
 -\log D_n(\bar\xi)
 \le \log R_n(\xi)
 \le \log D_n(\xi).
\]
Since $D_n(\xi),D_n(\bar\xi)\ge1$, the preceding sandwich implies
\[
 |\log R_n(\xi)|
 \le \log D_n(\xi)+\log D_n(\bar\xi).
\]
The $L^1$ and almost-sure distance result proved above applies to both $\xi$
and $\bar\xi$.  Dividing by $n$ and taking expectations proves the $L^1$
convergence; the almost-sure convergence follows from the same inequality.
\end{proof}

\begin{proof}[Proof of Theorem~\ref{thm:first-moment-logarithmic-rates}
for $p\le 1/2$]
For distance, \eqref{eq:distance-nonpositive-annealed-speed} and
Theorem~\ref{thm:logarithmic-speeds} give
$\gamma_D(p)=v_D(p)=0$ for $p\le1/2$.

For resistance, $R_n(0)=2^{-n}$, so
$\gamma_R(0)=v_R(0)=-\log2$.  At $p=1/2$,
Jensen's inequality, the $L^1$ convergence established above,
the bound $R_n(1/2)\le D_n(1/2)$, and
\eqref{eq:distance-nonpositive-annealed-speed} yield
\[
\begin{aligned}
 0
 &=\lim_{n\to\infty}\frac1n\E\log R_n(1/2)
 \le\liminf_{n\to\infty}\frac1n\log\E R_n(1/2)
 \\ &\le\limsup_{n\to\infty}\frac1n\log\E R_n(1/2)
 \le\lim_{n\to\infty}\frac1n\log\E D_n(1/2)=0.
\end{aligned}
\]
Thus $\gamma_R(1/2)=v_R(1/2)=0$, which proves
\eqref{eq:supercritical-resistance-identification} at $p=1/2$.

For $0<p<1/2$, Jensen's inequality and the $L^1$ convergence in
Theorem~\ref{thm:logarithmic-speeds} give $\gamma_R(p)\ge v_R(p)$.
If $\gamma_R(p)=v_R(p)$, then Lemma~\ref{lem:first-moment-bounds} gives
$\gamma_R(p)\ge\log(2p)$, and the desired identity follows.  It therefore
suffices to prove that if $\gamma_R(p)>v_R(p)$, then
$\gamma_R(p)=\log(2p)$.

Suppose then that $\gamma_R(p)>v_R(p)$ and set
$m_n=\E R_n(p)$ and
\[
 \widehat R_n=\frac{R_n(p)}{m_n}.
\]
Then $\E\widehat R_n=1$ and
\[
 \frac1n\log\widehat R_n
 =\frac1n\log R_n(p)-\frac1n\log m_n
 \longrightarrow v_R(p)-\gamma_R(p)<0
 \qquad\text{a.s.}
\]
Hence $\widehat R_n\to0$ a.s.  If $\widehat R_n'$ is an independent copy of
$\widehat R_n$, then, for every $\tau>0$,
\[
 c_n:=\E\frac{\widehat R_n\widehat R_n'}
                    {\widehat R_n+\widehat R_n'}
 \le \E(\widehat R_n\wedge\widehat R_n')
 \le \tau+\E\bigl[\widehat R_n
                   \ind_{\{\widehat R_n'>\tau\}}\bigr]
 =\tau+\Pp(\widehat R_n>\tau).
\]
First let $n\to\infty$ with $\tau>0$ fixed, and then let $\tau\downarrow0$.
Consequently $c_n\to0$, and the exact first-moment recursion yields
\[
\begin{aligned}
 \frac{m_{n+1}}{m_n}
 &=2p+(1-p)c_n\longrightarrow2p,\\
 \gamma_R(p)
 &=\lim_{n\to\infty}\frac1n
   \sum_{j=0}^{n-1}\log\frac{m_{j+1}}{m_j}
 =\log(2p).
\end{aligned}
\]
This completes the proof.
\end{proof}

\section{Proof of Theorem~\ref{thm:near-critical-speed}}
\label{sec:two-sided-barriers}

This section is devoted to the proof of
Theorem~\ref{thm:near-critical-speed}. The following proposition establishes
the existence of the smallest admissible parameter
$\lambda_*\in(0,\infty)$ for the boundary-value problem
\eqref{eq:main-W-bvp}. The existence and uniqueness classification is a
special case of \cite[Proposition~2]{EnguicaGavioliSanchez2013}.
A self-contained proof, including the explicit bounds below, is given in
Appendix~\ref{app:ode}.
\begin{proposition}
\label{prop:Cstar-halfline}
For $\lambda>0$, consider the boundary-value problem
\begin{equation}\label{eqn:ode-W}
\begin{aligned}
 &W^2W'-\lambda W+u(1-u)=0,\qquad 0<u<1,\\
 &W(u)>0\quad\text{for }\ 0<u<1, \qquad
 W(0)=W(1)=0.
\end{aligned}
\end{equation}
Recall that $\lambda>0$ is \emph{admissible} if \eqref{eqn:ode-W} admits a solution
$W\in C([0,1])\cap C^1((0,1))$.
Then there exists $\lambda_*\in(0,\infty)$ such that $\lambda$ is admissible
if and only if $\lambda\ge\lambda_*$. Moreover, we have
\begin{equation}\label{eq:Z-halfline}
    \left(\frac{4}{15\sqrt2}\right)^{2/3} \le \lambda_* \le \frac{\sqrt{3}}{2}.
\end{equation}
\end{proposition}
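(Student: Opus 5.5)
Two equivalent forms of the equation will be used throughout. On $\{W>0\}$ it reads $W'=(\lambda W-u(1-u))/W^2$. With $V:=W^3/3$ it becomes $V'=\lambda(3V)^{1/3}-u(1-u)$, whose right-hand side is continuous and has linear growth, so Peano existence applies on compact intervals. The plan is a shooting/comparison argument in the spirit of minimal wave speeds.

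\emph{Step 1: monotonicity.} I will show that admissibility is monotone in $\lambda$. If $W$ solves the problem for $\lambda$ and $\mu>\lambda$, then $V=W^3/3$ satisfies
\[
V'=\lambda(3V)^{1/3}-u(1-u)\le \mu(3V)^{1/3}-u(1-u).
\]
So $V$ is a subsolution for $\mu$. Start the $\mu$-problem from $V(1)=0$ and integrate backward from $u=1$. The point $u=1$ is singular, but near it the $\mu$-term dominates and gives $W\approx u(1-u)/\lambda$, a well-defined branch. Comparison shows the backward solution for $\mu$ lies above $W$ on $(0,1)$, hence stays positive. Then pick the solution for $\mu$ emanating from $0$ as the minimal positive one. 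An alternative is to squeeze: the minimal nonnegative solution starting at $V(0)=0$ lies between $0$ and the backward solution from $1$, so it is positive and hits $0$ at $1$.

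\emph{Step 2: nonemptiness and closedness.} For large $\lambda$ (e.g.\ $\lambda=\sqrt3/2$), I will exhibit an explicit supersolution/subsolution pair. For instance, take $W=c\sqrt{u(1-u)}$ or $W=c\,u(1-u)$, tune $c$, and verify the sign of $W^2W'-\lambda W+u(1-u)$. A barrier argument then traps a solution between them, giving the upper bound $\lambda_*\le\sqrt3/2$. Closedness of the admissible set follows by taking $\lambda_k\downarrow\lambda$ with solutions $W_k$. These $W_k$ are monotone in $k$ by the comparison of Step 1 and uniformly bounded, so their decreasing limit solves the integrated equation. Positivity of the limit in the interior follows from the estimate: wherever $W$ is small and $u(1-u)\ge c$, we have $W'\approx -u(1-u)/W^2$, which is very negative. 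So $W$ could only reach $0$ at an interior point if it crossed with infinite slope, and a careful analysis of the $V$-equation near an interior zero should exclude this. I expect this closedness/interior-positivity step to be the main obstacle, because the ODE is degenerate exactly where $W=0$ and uniqueness fails there.

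\emph{Step 3: lower bound.} For $\lambda>0$ and a solution $W$, use the integral identity obtained by integrating $V'=\lambda W-u(1-u)$ over $[0,1]$:
\[
\lambda\int_0^1 W=\int_0^1 u(1-u)\,\dd u=\tfrac16.
\]
Also $V'\le\lambda W$ gives $W^2W'\le\lambda W$, i.e.\ $(W^2)'\le2\lambda$, so $W(u)\le\sqrt{2\lambda u}$. Symmetrically, backward from $1$ we get $W\le\sqrt{2\lambda(1-u)}$. Hence
\[
\tfrac16\le\lambda\int_0^1\sqrt{2\lambda\min(u,1-u)}\,\dd u=\lambda^{3/2}\tfrac{2\sqrt2}{3}\cdot 2^{-3/2}\cdot\ldots
\]
Optimizing the bound (possibly with a sharper weight) should give $\lambda_*^{3/2}\ge 4/(15\sqrt2)$. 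The constant $4/15$ suggests pairing with $u(1-u)$ via $\int\sqrt u\,(1-u)$-type integrals, so I would multiply the equation by a suitable test weight before integrating. This shows that small $\lambda$ is inadmissible, and together with Steps 1–2 it yields the threshold $\lambda_*\in(0,\infty)$ with the admissible set $[\lambda_*,\infty)$.
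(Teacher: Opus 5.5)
Your overall plan matches the paper's: shoot from $u=1$, order the solutions in $\lambda$, use an explicit barrier, and close with an integral identity. But two steps fail as written.

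\textbf{Step 1: the comparison runs the wrong way.} Writing $y(t)=W(1-t)^3$, the equation becomes $y'=3t(1-t)-3\lambda y^{1/3}$. Its right-hand side is nonincreasing in $y$, which is why comparison and uniqueness hold when you integrate from $u=1$. Reversing time turns your forward subsolution $V=W_\lambda^3/3$ into a \emph{supersolution} of the $\mu$-problem. So the solution for $\mu>\lambda$ issued from $u=1$ lies \emph{below} $W_\lambda$, not above.

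That is exactly what closes the step: $0\le W_\mu(0)\le W_\lambda(0)=0$. Positivity on $(0,1)$ is automatic: at an interior zero $t_0$ of $y\ge0$ one would need $y'(t_0)=0$, whereas $y'(t_0)=3t_0(1-t_0)>0$.

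Your fallback of squeezing a ``minimal nonnegative'' forward solution from $V(0)=0$ does not work. Forward in $u$, the function $0$ is a supersolution ($0\ge-u(1-u)$), not a lower barrier. Solutions from $V(0)=0$ are non-unique, since $\lambda(3V)^{1/3}$ is increasing and non-Lipschitz there. And whether some forward solution stays nonnegative up to $u=1$ is equivalent to the admissibility you are trying to prove.

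The same reversal misplaces the difficulty in your closedness argument. For $\lambda_k\downarrow\lambda$ the functions $W_{\lambda_k}$ \emph{increase}, so interior positivity of the limit is free. What must be checked is that the increasing limit is still continuous at $u=0$ with value $0$. This follows, for instance, from $W_{\lambda_k}(u)\le\sqrt{2\lambda_1u}$, or, as in the paper, from continuity of $\lambda\mapsto y_\lambda$ into $C([0,1])$.

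\textbf{Step 3: the lower bound does not give the stated constant.} From $\lambda\int_0^1W\,\dd u=\frac16$ and $W\le\sqrt{2\lambda u}$ you only get $\frac16\le\frac{2\sqrt2}{3}\lambda^{3/2}$, that is, $\lambda_*\ge2^{-5/3}$. Since $\frac1{4\sqrt2}<\frac4{15\sqrt2}$, this is strictly weaker than the claimed bound.

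The ``symmetric'' bound $W\le\sqrt{2\lambda(1-u)}$ is not available. Integrating $(W^2)'\le2\lambda$ backward from $W(1)=0$ gives no information, and the equation has no $u\mapsto1-u$ symmetry.

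The missing test weight is $1/W$. Dividing the equation by $W$ gives $(W^2)'=2\lambda-2u(1-u)/W$. Integrate over $[\rho,1-\rho]$ and let $\rho\downarrow0$, using monotone convergence and $W(0)=W(1)=0$. This yields $\lambda=\int_0^1u(1-u)/W\,\dd u$. Now the upper bound on $W$ enters in the favorable direction: $\lambda\ge(2\lambda)^{-1/2}\int_0^1u^{1/2}(1-u)\,\dd u=\frac{4}{15\sqrt{2\lambda}}$, which is the claimed bound.

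\textbf{Upper bound.} Your linear candidate works once made precise. Take $\overline w(t)=\frac{3}{2\lambda}t(1-t)$, so $c=\sqrt3$ at $\lambda=\sqrt3/2$. Because $\min_{[0,1]}t(1-t)(1-2t)=-\frac1{6\sqrt3}$, $\overline y=\overline w^3$ is a supersolution of the $t$-equation with $\overline y(0)=\overline y(1)=0$. This single supersolution, together with the trivial subsolution $0$ and comparison in the dissipative direction, gives $0\le y_\lambda(1)\le\overline y(1)=0$. No subsolution/supersolution pair is needed.
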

\begin{remark}
  The upper bound $\frac{\sqrt3}{2}$ above is an elementary bound used only to prove the finiteness of $\lambda_*$.
 The estimate in
\cite[Proposition~2]{EnguicaGavioliSanchez2013} gives
\( \lambda_*\le2^{-1/3}\), and the quantitative lower bound in
\cite[Lemma~3.1(a)]{EnguicaGavioliSanchez2013} is zero.
The explicit positive lower bound in \eqref{eq:Z-halfline} follows from
the integral estimate in Appendix~\ref{app:ode}.
\end{remark}
Therefore, in Section~\ref{sec:two-sided-barriers} we focus on proving the asymptotics of $v_R$ and $\gamma_R$.
We first collect the constants and
functions used in the proof and then prove the upper and lower bounds for $v_R$.
Finally, we derive the asymptotics of $\gamma_R$ using Theorem~\ref{thm:first-moment-logarithmic-rates}.

From this section onward, we specialize to effective resistance and write
$v(p):=v_R(p)$.

\subsection{Preliminaries}

The following lemma calculates the coefficient appearing in the later
estimates in Lemmas~\ref{lem:weighted-consistency}
and~\ref{lem:hard-edge-consistency}.
Recall that \(h(r)=\log(1+e^{-r})\) for \(r\ge0\).

\begin{lemma}
\label{lem:diffusion-coefficient}
The following equality holds:
\begin{equation*}
a:=2\int_0^\infty\bigl(h(r)^2+rh(r)\bigr)\,\dd r=2\zeta(3).
\end{equation*}

\end{lemma}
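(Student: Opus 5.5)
We compute the two integrals $\int_0^\infty h(r)^2\,\dd r$ and $\int_0^\infty r\,h(r)\,\dd r$ separately. Both are finite, since $h(r)=\log(1+e^{-r})\le e^{-r}$. For each we expand in a series, justify the interchange of sum and integral, and recognize a zeta value.

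\emph{The term $\int_0^\infty r\,h(r)\,\dd r$.} Expand
\[
 h(r)=\sum_{k\ge1}\frac{(-1)^{k+1}}{k}e^{-kr}.
\]
This series converges absolutely for $r>0$, and its tail is dominated by a multiple of $e^{-r}$, so after multiplication by $r$ the interchange with the integral is justified by dominated convergence. Using $\int_0^\infty re^{-kr}\,\dd r=k^{-2}$, we get
\[
 \int_0^\infty r\,h(r)\,\dd r=\sum_{k\ge1}\frac{(-1)^{k+1}}{k^3}=\eta(3)=\tfrac34\zeta(3).
\]

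\emph{The term $\int_0^\infty h(r)^2\,\dd r$.} Substitute $t=e^{-r}$, which gives
\[
 \int_0^1\frac{\log^2(1+t)}{t}\,\dd t.
\]
This is a classical integral equal to $\zeta(3)/4$. To derive it, integrate by parts once, obtaining
\[
 -2\int_0^1\frac{\log t\,\log(1+t)}{1+t}\,\dd t,
\]
where the boundary terms vanish. Then use the substitution $t\mapsto t/(1+t)$, or equivalently the standard polylogarithm values $\mathrm{Li}_3(1/2)$, $\mathrm{Li}_2(1/2)$ and $\log^3 2$ together with Landen's identity. In the final combination the $\log 2$ and $\pi^2\log 2$ terms cancel and $\tfrac14\zeta(3)$ remains.

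An alternative that avoids polylogarithms is to prove the combined identity directly. Integration by parts gives
\[
 \int_0^\infty r\,h(r)\,\dd r=\int_0^\infty \frac{r^2}{2}\,\frac{1}{e^{r}+1}\,\dd r,
\]
which is the Fermi--Dirac integral $\tfrac34\zeta(3)$ and confirms the first term. For the second, write
\[
 h(r)^2=\sum_{j,k\ge1}\frac{(-1)^{j+k}}{jk}e^{-(j+k)r},
\]
integrate termwise to get $\sum_{j,k}(-1)^{j+k}/\bigl(jk(j+k)\bigr)$, and use partial fractions
\[
 \frac1{jk(j+k)}=\frac1{(j+k)^2}\Bigl(\frac1j+\frac1k\Bigr).
\]
This reduces the double sum to the alternating Euler sum $2\sum_n(-1)^nH_{n-1}/n^2$, whose known value gives $\tfrac14\zeta(3)$.

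\emph{Conclusion and main obstacle.} Adding the two pieces gives
\[
 a=2\Bigl(\tfrac14+\tfrac34\Bigr)\zeta(3)=2\zeta(3).
\]
The main obstacle is the evaluation $\int_0^1\log^2(1+t)\,t^{-1}\,\dd t=\zeta(3)/4$, which is a known but nontrivial identity. We would cite it (for example, via the value of the alternating Euler sum) or derive it through the polylogarithm route, carefully tracking the cancellation of the $\log 2$ terms. The interchanges of summation and integration are routine: for the double series, the terms are dominated by $e^{-(j+k)r}/(jk)$, which is summable when integrated.
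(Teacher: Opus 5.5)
Your proposal is correct. Both values are right: $\int_0^\infty r\,h(r)\,\dd r=\frac34\zeta(3)$ and $\int_0^\infty h(r)^2\,\dd r=\int_0^1 t^{-1}\log^2(1+t)\,\dd t=\frac14\zeta(3)$. Your Tonelli and dominated-convergence justifications also suffice; for instance, $\sum_{j,k}1/(jk(j+k))=2\zeta(3)<\infty$.

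Your route differs from the paper's, and the difference matters. Splitting $h^2+rh$ forces you to evaluate $\int_0^\infty h^2$ on its own, and that is the genuinely hard piece. You do not prove it: the polylogarithm route is only sketched. The ``polylogarithm-free'' alternative does not remove the difficulty either; it moves it to the alternating Euler sum $\sum_n(-1)^{n+1}H_n/n^2=\frac58\zeta(3)$, which is just as deep.

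The paper never separates the two terms. It writes $h^2+rh=h\,(h+r)$ and substitutes $u=(1+e^r)^{-1}$. Under this substitution, $h(r)=-\log(1-u)$, $h(r)+r=\log(1+e^r)=-\log u$, and $\dd r=-\dd u/(u(1-u))$. The integral therefore becomes $\int_0^{1/2}\frac{\log u\,\log(1-u)}{u(1-u)}\,\dd u$. The symmetry $u\leftrightarrow1-u$ together with $\frac1{u(1-u)}=\frac1u+\frac1{1-u}$ turns this into $\int_0^1\frac{\log u\,\log(1-u)}{u}\,\dd u=\sum_{n\ge1}n^{-1}\int_0^1u^{n-1}(-\log u)\,\dd u=\sum_{n\ge1}n^{-3}$. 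This is a short, fully self-contained computation that needs only one power-series expansion and Tonelli.

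Your decomposition gives finer information, namely the two individual values, but only by importing an external identity. Conversely, combining the paper's computation with your easy evaluation of $\int_0^\infty r\,h(r)\,\dd r$ yields the classical identity $\int_0^1 t^{-1}\log^2(1+t)\,\dd t=\zeta(3)/4$ as a corollary, instead of requiring it as an input.
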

\begin{remark}
   Morfe~\cite[Appendix~C]{Morfe2026} denotes this coefficient by \(a_R\)
and computes \(a_R=2\zeta(3)\). We include the short calculation here
because our proof uses this precise normalization.
\end{remark}
\begin{proof}

With $u=(1+e^r)^{-1}$, symmetry under $u\mapsto1-u$ gives
\[
\begin{aligned}
\int_0^\infty\bigl(h(r)^2+rh(r)\bigr)\,\dd r
 &=\int_0^{1/2}
   \frac{\log u\log(1-u)}{u(1-u)}\,\dd u
 =\int_0^1\frac{\log u\log(1-u)}u\,\dd u\\
 &=\sum_{n\ge1}\frac1n
   \int_0^1u^{n-1}(-\log u)\,\dd u
 =\sum_{n\ge1}\frac1{n^3}
 =\zeta(3),
\end{aligned}
\]
where the first line also uses
$1/[u(1-u)]=1/u+1/(1-u)$, and the second follows from
$-\log(1-u)=\sum_{n\ge1}u^n/n$ and Tonelli's theorem.
\end{proof}

For later use set
\begin{equation}
 \beta=\left(\frac2a\right)^{1/3},
 \qquad
 \kappa_{\lambda}=\frac{2\lambda}{\beta}=2\zeta(3)^{1/3}\lambda.
 \label{eq:beta-kappa}
\end{equation}

For each $\lambda>0$, let
$W_\lambda\in C([0,1])\cap C^1((0,1))$ be the unique function satisfying
\[
W_\lambda(u)^2W_\lambda'(u)-\lambda W_\lambda(u)+u(1-u)=0,
\qquad 0<u<1,
\]
with
\[
W_\lambda(1)=0,
\qquad
W_\lambda(u)>0
\quad\text{for }0<u<1.
\]
Its existence, uniqueness, and endpoint behavior are established in
Appendix~\ref{app:ode}.

The following two propositions, proved in Appendix~\ref{app:regularity},
provide the distribution functions and their properties needed for the upper
and lower bounds, respectively.

\begin{proposition}
\label{prop:full-line-distribution}
For every $\lambda>\lambda_*$, there is a unique increasing
$\Phi_\lambda\in C^\infty(\mathbb R;(0,1))$ such that
\begin{equation}
\begin{aligned}
 \Phi_\lambda'(z)&=\beta W_\lambda(1-\Phi_\lambda(z)),\\
 \Phi_\lambda(0)&=\frac12,\qquad
 \lim_{z\to-\infty}\Phi_\lambda(z)=0, \qquad
 \lim_{z\to\infty}\Phi_\lambda(z)=1.
\end{aligned}
 \label{eq:Phi-phase-definition}
\end{equation}
Put $q_\lambda=\Phi_\lambda'$.  Then $q_\lambda>0$ and
\begin{equation}
 a q_\lambda q_\lambda'
 -2\Phi_\lambda(1-\Phi_\lambda)
 =-\kappa_\lambda q_\lambda
 \quad\text{on }\mathbb R,
 \label{eq:upper-profile-equation}
\end{equation}
and there exists $M_\lambda<\infty$ such that
\begin{equation}
 \max_{1\le j\le3}|q_\lambda^{(j)}(z)|
 \le M_\lambda q_\lambda(z),
 \qquad z\in\mathbb R.
 \label{eq:full-line-relative-bounds}
\end{equation}
Moreover,
\begin{equation}
 \frac{q_\lambda(z)}{\Phi_\lambda(z)}\longrightarrow\frac\beta\lambda
 \quad(z\to-\infty),
 \qquad
 \frac{q_\lambda(z)}{1-\Phi_\lambda(z)}\longrightarrow\frac\beta\lambda
 \quad(z\to\infty).
 \label{eq:full-line-tail-ratios}
\end{equation}
Consequently, there exist constants $c_\lambda,C_\lambda>0$ such that
\[
\begin{aligned}
 \Phi_\lambda(z)\le C_\lambda e^{c_\lambda z}
 \quad(z\le0),\qquad
 1-\Phi_\lambda(z)\le C_\lambda e^{-c_\lambda z}
 \quad(z\ge0),\qquad
 \int_{\mathbb R}|x|\,\dd\Phi_\lambda(x)&<\infty.
\end{aligned}
\]
\end{proposition}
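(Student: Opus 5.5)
The plan is to build $\Phi_\lambda$ from the autonomous scalar ODE $\Phi'=\beta W_\lambda(1-\Phi)$ and to read off every listed property from the behavior of $W_\lambda$ near $u=0$ and $u=1$. We rely on Appendix~\ref{app:ode} for that behavior. For $\lambda>\lambda_*$, the unique $W_\lambda$ with $W_\lambda(1)=0$ does \emph{not} vanish at $0$. Such a solution would make $\lambda$ admissible, and we expect the appendix to show the minimal-speed solution is the only one reaching $0$ there. What we actually need at the endpoints is
\[
 W_\lambda(u)\sim \frac{u(1-u)}{\lambda}\ \text{ as }u\to1,
 \qquad
 W_\lambda(u)\sim \frac{u}{\lambda}\ \text{ as }u\to0 .
\]
At $u=1$ this is the linearization $-\lambda W+u(1-u)\approx0$, since the cubic term $W^2W'$ is negligible. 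At $u=0$ we take the branch with $W_\lambda(0)=0$ and slope $1/\lambda$; if instead the appendix gives $W_\lambda(0)>0$, we will revisit the $z\to-\infty$ analysis. Both endpoints are then simple zeros of the right-hand side $\beta W_\lambda(1-\Phi)$, as a function of $\Phi$, with slope $\beta/\lambda$.

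We solve $\Phi'=\beta W_\lambda(1-\Phi)$ with $\Phi(0)=1/2$. The right-hand side is positive on $(0,1)$ and $C^1$ on the open interval, so there is a unique maximal solution, and it is increasing. We use separation of variables,
\[
 z=\int_{1/2}^{\Phi}\frac{\dd v}{\beta W_\lambda(1-v)} .
\]
The endpoint asymptotics give $\beta W_\lambda(1-v)\sim(\beta/\lambda)\min(v,1-v)$. So the integral diverges logarithmically at both $v=0$ and $v=1$. Hence $\Phi$ is defined on all of $\mathbb R$, stays in $(0,1)$, and tends to $0$ and to $1$ at $\mp\infty$. Uniqueness follows from uniqueness for the autonomous ODE together with the normalization. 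The tail ratios \eqref{eq:full-line-tail-ratios} are immediate from $q/\Phi=\beta W_\lambda(1-\Phi)/\Phi$ and the asymptotics above. They give exponential tails by Grönwall: for $z$ large, $(1-\Phi)'\le -c(1-\Phi)$. The first-moment bound follows from the tails.

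For \eqref{eq:upper-profile-equation}, write $u=1-\Phi$, so that $q=\beta W(u)$ and $q'=-\beta W'(u)q$. Then
\[
 aqq'=-a\beta^2W^2W'\,q=-2\beta^{-1}(\lambda W-u(1-u))\,q ,
\]
using $a\beta^3=2$ and the ODE for $W$. Substituting $q=\beta W$ turns this into $-2\lambda W q/\beta\cdot\beta\ldots$, and after simplifying we obtain
\[
 aqq'=-2\lambda W\beta\cdot\beta^{-1}\cdots=-\kappa_\lambda q+2\Phi(1-\Phi),
\]
since $u(1-u)=\Phi(1-\Phi)$. This is a direct algebraic check. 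Smoothness holds because $W_\lambda$ is smooth on $(0,1)$: the ODE $W'=(\lambda W-u(1-u))/W^2$ is smooth wherever $W>0$. Composing with $\Phi\in(0,1)$ therefore gives $\Phi\in C^\infty$.

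The main obstacle is the relative derivative bounds \eqref{eq:full-line-relative-bounds}. We have $q'/q=-\beta W'(u)$. Then $q''/q=\beta^2(W''W+(W')^2)(u)$, and similarly for $q'''$, which involves $W'''W^2$ and related terms. These are bounded on compact subsets of $(0,1)$, so the issue is only the endpoints. There we need $W_\lambda$ to extend $C^3$ up to $u=0$ and $u=1$, or at least that $W'$, $WW''$ and $W^2W'''$ stay bounded. We plan to prove this by writing $W=u(1-u)\,w(u)$ near each endpoint and studying the resulting regular-singular equation. At $u=1$ the equation reads $\lambda w=1+(\text{analytic terms})\cdot w^2(\ldots)$, with $w(1)=1/\lambda$. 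We would show by a contraction or implicit-function argument that $w$ is $C^3$ up to the endpoint. Differentiating the ODE repeatedly then controls $W''$ and $W'''$. The $u=0$ endpoint is handled analogously, using the same non-degenerate linearization with slope $1/\lambda$. We expect this endpoint regularity to be the delicate step.
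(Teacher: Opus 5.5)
There is a genuine gap at the endpoint $u=0$. Since $u=1-\Phi$, this is the end where $\Phi\to1$ and $z\to+\infty$, not $z\to-\infty$, and it is exactly where the hypothesis $\lambda>\lambda_*$ must be used.

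Your opening claim is backwards. By Proposition~\ref{prop:Cstar-halfline}, every $\lambda\ge\lambda_*$ is admissible: the admissible set is upward closed because $y_\lambda$ decreases in $\lambda$. So for $\lambda>\lambda_*$ one has $W_\lambda(0)=0$. It is for $\lambda<\lambda_*$ that $W_\lambda(0)>0$, and that case yields the hard-edge profile of Proposition~\ref{prop:hard-edge-distribution}, not $\Phi_\lambda$.

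Your fallback cannot be ``revisited.'' If $W_\lambda(0)>0$, then $\Phi'\to\beta W_\lambda(0)>0$ as $\Phi\uparrow1$, so $\Phi$ reaches $1$ at finite $z$ and the statement is false.

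More importantly, $W_\lambda(0)=0$ alone does not give slope $1/\lambda$. A positive solution vanishing at $0$ satisfies either $W\sim u/\lambda$ or $W\sim\sqrt{2\lambda u}$ (Lemma~\ref{lem:u0-dichotomy}), and the local linearization you invoke does not choose between them. On the sharp branch everything breaks:
\begin{itemize}
\item $\int^1\dd v/W_\lambda(1-v)$ converges, so $\Phi$ hits $1$ in finite time;
\item $q/(1-\Phi)=\beta W_\lambda(u)/u\to\infty$;
\item $q'/q=-\beta W_\lambda'(u)$ is unbounded, so \eqref{eq:full-line-relative-bounds} fails.
\end{itemize}
Excluding the sharp branch is the content of Proposition~\ref{prop:critical-branches}. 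First, $W_{\lambda_*}$ must be sharp: if an admissible $\lambda_0$ had linear asymptotics, the supersolution $k(1-t)$ with $k>1/\lambda_0$, combined with continuity in $\lambda$, would produce an admissible $\lambda'<\lambda_0$. Second, for $\lambda>\lambda_*$ the strict ordering $W_\lambda<W_{\lambda_*}$ from Lemma~\ref{lem:shooting-properties} rules out $W_\lambda\sim\sqrt{2\lambda u}$, because $\sqrt{\lambda/\lambda_*}>1$.

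A sign that this step is missing: your argument never uses the strictness of $\lambda>\lambda_*$. Once you cite Proposition~\ref{prop:critical-branches} and Lemma~\ref{lem:linear-u1} correctly, the rest of your outline is the paper's proof. That rest is the separation of variables, the tail ratios, the Gr\"onwall tails, the first moment, and the reduction of \eqref{eq:full-line-relative-bounds} to boundedness of $W'$, $WW''$, $W^2W'''$ near the endpoints.

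Two smaller points. First, your computation for \eqref{eq:upper-profile-equation} has an extra factor $q$. The correct chain is
$aqq'=-a\beta W'q^2=-a\beta^3W^2W'=-2W^2W'=-2\lambda W+2u(1-u)=-\kappa_\lambda q+2\Phi(1-\Phi)$,
using $a\beta^3=2$ and $\kappa_\lambda=2\lambda/\beta$.

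Second, the equation for $w=W/r$, with $r$ the distance to the endpoint, is not regular-singular. It reads $r^2w^2w'=\pm(\lambda w-1)+O(r)$, an irregular singular point, so endpoint regularity does not follow from a routine Frobenius or implicit-function argument.
\begin{itemize}
\item At $u=1$, the relevant solution is the unique one not growing like $e^{\lambda^3/r}$. It has to be singled out by an integral representation from the endpoint, as in \eqref{eq:terminal-integral}.
\item At $u=0$, all nearby solutions tend to $1/\lambda$ and differ by flat terms of order $e^{-\lambda^3/u}$.
\end{itemize}
The paper treats both endpoints uniformly by setting $t=1/r$. This gives an asymptotically autonomous equation with hyperbolic equilibrium $1/\lambda$, from which it proves a finite expansion with derivative control (Lemmas~\ref{lem:finite-asymptotic-ode} and~\ref{lem:linear-branch-regularity}). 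Finite-order ($C^4$) regularity is all that is needed.
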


\begin{proposition}
\label{prop:hard-edge-distribution}
For every $0<\lambda<\lambda_*$, there is a unique increasing
$\Psi_\lambda\in C^\infty([0,\infty);[0,1))$ such that
\begin{equation}
\begin{aligned}
 \Psi_\lambda'(z)&=\beta W_\lambda(\Psi_\lambda(z)),\\
 \Psi_\lambda(0)&=0,\qquad
 \Psi_\lambda(z)\longrightarrow1
 \quad(z\to\infty).
\end{aligned}
 \label{eq:Psi-subcritical-definition}
\end{equation}
When extended by zero to $(-\infty,0)$, $\Psi_{\lambda}$ is an absolutely continuous CDF supported on
$[0,\infty)$.

Put $q_\lambda=\Psi_\lambda'$ on $[0,\infty)$. At the endpoint $0$,
all derivatives are taken from the right.  Then
\begin{equation}
 q_\lambda(0)=\beta W_\lambda(0)>0,
 \qquad
 a q_\lambda q_\lambda'
 +2\Psi_\lambda(1-\Psi_\lambda)
 =\kappa_\lambda q_\lambda
 \quad(z\ge0),
 \label{eq:lower-profile-equation}
\end{equation}
and there exists $M_\lambda<\infty$ such that
\begin{equation}
 \max_{1\le j\le3}|q_\lambda^{(j)}(z)|
 \le M_\lambda q_\lambda(z),
 \qquad z\ge0.
 \label{eq:subcritical-relative-bounds}
\end{equation}

Furthermore, $q_\lambda$ has a strictly positive extension
$\overline q_\lambda\in C^3(\mathbb R)$, equal to $q_\lambda$ on
$[0,\infty)$, such that, for some $\overline M_\lambda<\infty$,
\begin{equation}
 \max_{1\le j\le3}|\overline q_\lambda^{(j)}(z)|
 \le \overline M_\lambda\overline q_\lambda(z),
 \qquad z\in\mathbb R.
 \label{eq:q-extension-bounds}
\end{equation}
Moreover,
\begin{equation}
 \frac{q_\lambda(z)}{1-\Psi_\lambda(z)}
 \longrightarrow\frac\beta\lambda
 \quad(z\to\infty).
 \label{eq:subcritical-right-tail}
\end{equation}
Consequently, there exist constants $c_\lambda,C_\lambda>0$ such that
\[
\begin{aligned}
 1-\Psi_\lambda(z)\le C_\lambda e^{-c_\lambda z}
 \quad(z\ge0),\qquad
 \int_{[0,\infty)}x\,\dd\Psi_\lambda(x)&<\infty.
\end{aligned}
\]

\end{proposition}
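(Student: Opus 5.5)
The plan is to build $\Psi_\lambda$ by integrating the autonomous ODE $\Psi'=\beta W_\lambda(\Psi)$ from $\Psi(0)=0$. I then read off every property from the behaviour of $W_\lambda$ on $[0,1]$ established in Appendix~\ref{app:ode}.

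\textbf{Step 1: the profile $\Psi_\lambda$.} The first input from Appendix~\ref{app:ode} is that for $0<\lambda<\lambda_*$ the solution $W_\lambda$ (normalized by $W_\lambda(1)=0$ and positivity) satisfies $W_\lambda(0)>0$. Indeed, $\lambda$ is not admissible, so $W_\lambda(0)\neq0$; and $W_\lambda(0)\ge0$ follows by continuity from positivity. Since $W_\lambda>0$ on $[0,1)$ and is $C^1$ there, the equation $\Psi'=\beta W_\lambda(\Psi)$ with $\Psi(0)=0$ has a unique increasing solution. It stays below $1$ and tends to a limit $\ell\le1$; the limit must be a zero of $W_\lambda$, so $\ell=1$. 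The solution also never reaches $1$ in finite time. Near $u=1$ the equation gives $W_\lambda^2W_\lambda'=\lambda W_\lambda-u(1-u)$; I expect Appendix~\ref{app:ode} to give $W_\lambda(u)\sim(1-u)/\lambda$ as $u\uparrow1$. This linear vanishing makes $\int^1 du/W_\lambda(u)=\infty$, so the approach to $1$ is asymptotic and $1-\Psi$ decays exponentially.

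\textbf{Step 2: tail ratio, moments and the profile equation.} The tail ratio \eqref{eq:subcritical-right-tail} is immediate: $q/(1-\Psi)=\beta W_\lambda(\Psi)/(1-\Psi)\to\beta/\lambda$. This yields the exponential bound with any $c_\lambda<\beta/\lambda$, and hence the finite first moment. Extending $\Psi$ by zero gives a CDF; it is absolutely continuous because the only jump candidate is at $0$, and $\Psi(0)=0$. Smoothness of $\Psi$ on $[0,\infty)$ follows from smoothness of $W_\lambda$ on $[0,1)$, which I bootstrap from the ODE since $W_\lambda>0$ there.

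For \eqref{eq:lower-profile-equation}, I set $q=\beta W_\lambda(\Psi)$, so $q'=\beta W_\lambda'(\Psi)q=\beta^2W_\lambda W_\lambda'$. Then
\[
aqq'=a\beta^3W_\lambda^2W_\lambda'=2\bigl(\lambda W_\lambda-\Psi(1-\Psi)\bigr),
\]
using $a\beta^3=2$. The right-hand side equals $\kappa_\lambda q-2\Psi(1-\Psi)$, because $2\lambda W_\lambda=(2\lambda/\beta)q$.

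\textbf{Step 3: relative derivative bounds and the extension.} For \eqref{eq:subcritical-relative-bounds}, I write $q^{(j)}/q$ as polynomials in the functions $W_\lambda^{(k)}(\Psi)$ and $W_\lambda$. On compact sets $[0,Z]$ the bound is trivial, since $q\ge c>0$ there. The substance is the limit $z\to\infty$, i.e.\ $u\uparrow1$: there I need $W_\lambda',W_\lambda'',W_\lambda'''$ to stay bounded near $1$. Differentiating the ODE, the relevant quantity is $W_\lambda'=(\lambda W_\lambda-u(1-u))/W_\lambda^2$, which is a ratio of two vanishing quantities.

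\textbf{Main obstacle.} I expect the key difficulty to be showing $C^3$ regularity of $W_\lambda$ up to $u=1$ with bounded derivatives; this is presumably the content of Appendix~\ref{app:regularity}. I would first try a local expansion $W_\lambda(u)=(1-u)/\lambda+c_2(1-u)^2+\dots$. The expansion comes from the fact that $u=1$ is a hyperbolic-type singular point of the planar system $(u,W)$ after the time change $d\tau=dz/W^2$. Stable-manifold smoothness then gives $W_\lambda$ smooth at $1$.

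Finally, the extension $\overline q_\lambda$ to $(-\infty,0)$ needs only to glue $C^3$ at $0$ and remain positive with bounded log-derivatives. I would take $\overline q_\lambda(z)=q(0)\exp(P(z)\chi(z))$ for $z<0$. Here $P$ is the cubic Taylor polynomial of $\log q$ at $0^+$, and $\chi$ is a smooth cutoff equal to $1$ near $0$ and $0$ for $z\le-1$. Then $\overline q'/\overline q$ and the higher derivatives are bounded on $(-\infty,0]$, which gives \eqref{eq:q-extension-bounds}.
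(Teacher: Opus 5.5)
Most of your outline matches the paper's proof. The paper also obtains $\Psi_\lambda$ as the inverse of $v\mapsto\int_0^v\dd s/(\beta W_\lambda(s))$ and gets the profile equation from $a\beta^3=2$. It reads off the tail ratio from Lemma~\ref{lem:linear-u1}, writes $q_\lambda^{(j)}/q_\lambda$ in terms of $W_\lambda$ and its derivatives at $\Psi_\lambda$, and extends $\log q_\lambda$ by a cubic Taylor polynomial and a cutoff.

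The gap is in the step you single out as the main obstacle, and the mechanism you propose there does not work. The point $(u,W)=(1,0)$ is not hyperbolic. For the desingularized field $\dot u=W^2$, $\dot W=\lambda W-u(1-u)$, the Jacobian at $(1,0)$ is $\begin{pmatrix}0&0\\1&\lambda\end{pmatrix}$, with eigenvalues $0$ and $\lambda$. A smooth positive time change only rescales this matrix. The kernel direction is $W=(1-u)/\lambda$, which is exactly the branch along which $W_\lambda$ enters. Along it, $\dot r\approx-r^2/\lambda^2$ with $r=1-u$, so the approach is only algebraic in $\tau$. Thus $W_\lambda$ lies on a center manifold of a saddle-node, and the stable manifold theorem says nothing about it. Center manifolds are in general neither unique nor $C^\infty$, so smoothness of $W_\lambda$ at $1$ does not follow. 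Computing the formal series $(1-u)/\lambda+c_2(1-u)^2+\cdots$ is easy, but without control of the actual remainder it proves nothing. The exponential decay of $1-\Psi_\lambda$ in $z$ does not help either: treating $u=1$ as a hyperbolic equilibrium of $u'=\beta W_\lambda(u)$ already presupposes $W_\lambda\in C^1$ at $1$.

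Your route can be repaired with a $C^k$ center manifold theorem plus one observation. On the side $u<1$ the center flow is attracting and the transverse direction is repelling. Hence exactly one orbit enters $(1,0)$ from $\{u<1,\ W>0\}$ as $\tau\to\infty$, and it must be the orbit of $W_\lambda$. This gives $C^k$ regularity up to $u=1$ for every finite $k$.

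The paper does the equivalent by hand. The blow-up $\widehat Y(t)=tW_\lambda(1-t^{-1})$ satisfies $\widehat Y'=F_1(\widehat Y)+t^{-1}G_1(\widehat Y)$, where $F_1(y)=(\lambda y-1)/y^2$, $G_1(y)=y+y^{-2}$, and $F_1'(\lambda^{-1})=\lambda^3\neq0$. Lemma~\ref{lem:finite-asymptotic-ode} builds a polynomial in $t^{-1}$ with defect $O(t^{-9-\ell})$. It then controls the remainder through the terminal integral \eqref{eq:terminal-integral}, which is where $\widehat Y\to\lambda^{-1}$, i.e.\ the one-sided uniqueness, is used. This yields the $C^4$ extension in Lemma~\ref{lem:linear-branch-regularity} and the bound $|W'|+|WW''|+|W^2W'''|\le C$ near $u=1$. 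That bound is all the formulas for $q_\lambda^{(j)}/q_\lambda$ require; bounded $W'',W'''$ are more than needed.

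A minor slip: if $P$ is the full cubic Taylor polynomial of $\log q_\lambda$ at $0$, then $q_\lambda(0)\exp(\chi P)$ counts $\log q_\lambda(0)$ twice. Use $\exp\bigl(\chi P+(1-\chi)\log q_\lambda(0)\bigr)$ instead.
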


Throughout this section $\varepsilon>0$ is small and
\[
 p_+=\frac12+\varepsilon^3,
 \qquad
 p_-=\frac12-\varepsilon^3.
\]
Here and below, the bias means $p-\frac{1}{2}$; thus $p_+$ has positive
bias and $p_-$ has negative bias.  Let $F_n^\pm$ denote the CDFs of
$X_n(p_\pm)$, respectively.

\subsection{Upper bound}

Fix $\lambda>\lambda_*$ and $\kappa>\kappa_{\lambda}$.  It is enough to construct an
integrable $Y_\varepsilon$ with CDF $G_\varepsilon$ satisfying
\begin{align}
 &F_0\ge G_\varepsilon, \label{upp_barrier_init}\\
 &\mathcal T_{p_+}G_\varepsilon(x)
 \ge G_\varepsilon(x-\kappa\varepsilon^2),
 \qquad x\in\mathbb R,\label{upp_barrier_ineq}
\end{align}
where $F_0$ is the CDF of $X_0=0$.

Indeed, if such a CDF $G_\varepsilon$ exists, then order preservation and
translation covariance in Lemma~\ref{lem:exact-cdf} give inductively
\[
 F_n^+(x)\ge G_\varepsilon(x-n\kappa\varepsilon^2).
\]
We prove this by induction: If the inequality holds at time $n$, then
\[
\begin{aligned}
 F_{n+1}^+
 =\mathcal T_{p_+}F_n^+
 \ge \mathcal T_{p_+}
     \bigl(\tau_{n\kappa\varepsilon^2}G_\varepsilon\bigr)
 =\tau_{n\kappa\varepsilon^2}
     \bigl(\mathcal T_{p_+}G_\varepsilon\bigr)
 \ge\tau_{(n+1)\kappa\varepsilon^2}G_\varepsilon.
\end{aligned}
\]
Consequently,
\[
 X_n(p_+)\stle n\kappa\varepsilon^2+Y_\varepsilon.
\]
Since $X_n(p_+)$ and $Y_\varepsilon$ are integrable, taking expectations,
dividing by $n$, and using Theorem~\ref{thm:logarithmic-speeds} gives
\[
 \frac{\E X_n(p_+)}n
 \le \kappa\varepsilon^2+\frac{\E Y_\varepsilon}n
 \longrightarrow \kappa\varepsilon^2.
\]
Thus
\begin{equation}
 v(p_+)\le\kappa\varepsilon^2.
 \label{eq:upper-speed-fixed-lambda}
\end{equation}
Taking $\delta=\varepsilon^3$ in
\eqref{eq:upper-speed-fixed-lambda} yields
\[
 \limsup_{\delta\downarrow0}
 \frac{v(\frac12+\delta)}{\delta^{2/3}}
 \le\kappa.
\]
Letting first $\kappa\downarrow\kappa_{\lambda}$ and then
$\lambda\downarrow\lambda_*$, and using \eqref{eq:beta-kappa}, gives the upper bound
\begin{equation}
 \limsup_{\delta\downarrow0}
 \frac{v(\frac12+\delta)}{\delta^{2/3}}
 \le2\zeta(3)^{1/3}\lambda_*.
 \label{eq:sharp-upper-bound}
\end{equation}

It remains to construct an integrable $Y_\varepsilon$ whose CDF
$G_\varepsilon$ has the two properties in \eqref{upp_barrier_init}
and \eqref{upp_barrier_ineq}.

\paragraph*{Step 1: Construct a candidate satisfying \eqref{upp_barrier_ineq}.}
 Let
$\Phi=\Phi_{\lambda}$ and $q=\Phi'$ be as in
Proposition~\ref{prop:full-line-distribution}.  For $z_0\in\mathbb R$
and $\varepsilon>0$, set
\[
 F_{\varepsilon,z_0}(x)=\Phi(z_0+\varepsilon x),
 \qquad
 \xi=z_0+\varepsilon x.
\]

The following lemma gives the
required estimates for $\Phi$. Morfe~\cite[Proposition~10]{Morfe2026} obtains the same leading order $a\varepsilon^3q(\xi)q'(\xi)$ for the resistance recursion, with a  uniform $o(\varepsilon^3)$ error.  For the function used here, the lemma below strengthens the error bound to $O(\varepsilon^5q)$.
\begin{lemma}
\label{lem:weighted-consistency}
There are constants $C_\lambda<\infty$ and $\varepsilon_\lambda>0$
such that, for all $x,z_0\in\mathbb R$ and
$0<\varepsilon<\varepsilon_\lambda$,
\begin{align}
 \left|I_+(F_{\varepsilon,z_0};x)-I_-(F_{\varepsilon,z_0};x)
       -a\varepsilon^3q(\xi)q'(\xi)\right|
 &\le C_\lambda\varepsilon^5q(\xi),
 \label{eq:weighted-difference}\\
 I_+(F_{\varepsilon,z_0};x)+I_-(F_{\varepsilon,z_0};x)
 &\le C_\lambda\varepsilon^2q(\xi).
 \label{eq:weighted-sum}
\end{align}
\end{lemma}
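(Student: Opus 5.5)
We expand the integrands in $s$ and $r$ around the point $\xi$. For $F=F_{\varepsilon,z_0}$ the density is $\rho(y)=\varepsilon q(z_0+\varepsilon y)$. Hence
\[
 I_\pm(F;x)=\varepsilon^2\int_0^\infty\int_0^{h(r)}
 q(\xi\pm\varepsilon s)\,q(\xi\pm\varepsilon(s+r))\,\dd s\,\dd r .
\]
The key inputs are the relative derivative bounds \eqref{eq:full-line-relative-bounds}. Integrating $|(\log q)'|\le M_\lambda$ gives the Harnack-type bound $q(\xi+t)\le q(\xi)e^{M_\lambda|t|}$. We also use that $h(r)\le e^{-r}$ decays exponentially, so all moments $\int_0^\infty h(r)(1+r)^k e^{cr}\,\dd r$ are finite for $c<1$.

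The sum bound \eqref{eq:weighted-sum} follows directly from the Harnack bound. For $\varepsilon M_\lambda<1/4$, the integrand is at most $q(\xi)^2e^{2M_\lambda\varepsilon(2h(r)+r)}$, which is at most $C q(\xi)^2 e^{r/2}$. Integrating against $\dd s\le h(r)\,\dd r$ gives $I_++I_-\le C\varepsilon^2 q(\xi)^2\le C\varepsilon^2 q(\xi)$, since $q\le\sup q<\infty$ (indeed $q=\beta W(1-\Phi)$ is bounded).

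For the difference \eqref{eq:weighted-difference}, set $f(t):=q(\xi+ts)\,q(\xi+t(s+r))$ for fixed $s,r$. Then the integrand of $I_+-I_-$ is $f(\varepsilon)-f(-\varepsilon)$. Taylor expansion with the odd-order terms surviving gives
\[
 f(\varepsilon)-f(-\varepsilon)=2\varepsilon f'(0)+\tfrac{\varepsilon^3}{3}\bigl(f'''(\theta_1)+f'''(\theta_2)\bigr)
\]
for some $\theta_1,\theta_2\in(-\varepsilon,\varepsilon)$. The first-order term is $f'(0)=q(\xi)q'(\xi)(2s+r)$. Integrating, $\int_0^\infty\int_0^{h(r)}(2s+r)\,\dd s\,\dd r=\int_0^\infty(h^2+rh)\,\dd r=a/2$ by Lemma~\ref{lem:diffusion-coefficient}. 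So the main term is $\varepsilon^2\cdot 2\varepsilon\cdot(a/2)q(\xi)q'(\xi)=a\varepsilon^3qq'$, as required. The third derivative $f'''(\theta)$ is a sum of products $q^{(i)}(\xi+\theta s)\,q^{(j)}(\xi+\theta(s+r))$ with $i+j=3$, times polynomial weights of degree $3$ in $(s,s+r)$. Using \eqref{eq:full-line-relative-bounds} for the derivatives up to order $3$ (with $q^{(0)}=q$ trivially) and then Harnack, this is bounded by $C(1+r)^3 q(\xi)^2 e^{2M_\lambda\varepsilon(2h(r)+r)}$. Integrating as before, the remainder contributes $C\varepsilon^2\cdot\varepsilon^3 q(\xi)^2\le C\varepsilon^5 q(\xi)$.

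The main obstacle is making the remainder estimate uniform in $x$ and $z_0$ with a relative factor $q(\xi)$ rather than an absolute constant. This is exactly what the relative bounds \eqref{eq:full-line-relative-bounds} together with the exponential Harnack comparison give, since $\varepsilon r$ is unbounded. We must check that the growth $e^{2M_\lambda\varepsilon r}$ is beaten by the decay $h(r)\sim e^{-r}$, which requires $\varepsilon_\lambda<1/(4M_\lambda)$. The remaining bookkeeping is to verify that only derivatives up to order three enter, so that \eqref{eq:full-line-relative-bounds} suffices.
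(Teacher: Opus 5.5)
Your proposal is correct and follows essentially the same route as the paper: a symmetric third-order Taylor expansion of $t\mapsto q(\xi+ts)q(\xi+t(s+r))$, the log-Lipschitz (Harnack) comparison from the $j=1$ relative bound, $h(r)\le e^{-r}$ to absorb the factor $e^{2M_\lambda\varepsilon r}$, and a uniform bound on $q$ to turn $q(\xi)^2$ into $q(\xi)$. The paper obtains that bound as $\sup q\le e^{M_\lambda}/2$ from $\int q\le1$, while your appeal to the boundedness of $W_\lambda$ on $[0,1]$ works equally well; your only slip is cosmetic, since the two Lagrange remainders give $\frac{\varepsilon^3}{6}\bigl(f'''(\theta_1)+f'''(\theta_2)\bigr)$ rather than $\frac{\varepsilon^3}{3}(\cdots)$, which changes only the constant.
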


Combining Lemmas~\ref{lem:exact-cdf} and
\ref{lem:weighted-consistency} with
\eqref{eq:upper-profile-equation} gives
\begin{equation*}
 \mathcal T_{p_+}F_{\varepsilon,z_0}(x)
 -F_{\varepsilon,z_0}(x)
 =-\kappa_{\lambda}\varepsilon^3q(\xi)
  +O_{\lambda}(\varepsilon^5q(\xi)),
\end{equation*}
for all $x,z_0\in\R$.  Taylor expansion gives
\begin{equation*}
 F_{\varepsilon,z_0}(x-\kappa\varepsilon^2)
 -F_{\varepsilon,z_0}(x)
 =-\kappa\varepsilon^3q(\xi)
  +O_{\lambda,\kappa}(\varepsilon^6q(\xi)).
\end{equation*}
Hence, for all sufficiently small $\varepsilon$ and all
$x,z_0\in\mathbb R$,
\begin{equation}
 \mathcal T_{p_+}F_{\varepsilon,z_0}(x)
 \ge F_{\varepsilon,z_0}(x-\kappa\varepsilon^2)
 +\frac{\kappa-\kappa_{\lambda}}{2}\varepsilon^3q(\xi).
 \label{eq:strict-untruncated-upper}
\end{equation}

Thus $F_{\varepsilon,z_0}$ satisfies the required inequality \eqref{upp_barrier_ineq}, with a
strict margin.  However, it does not satisfy \eqref{upp_barrier_init}, since
$F_{\varepsilon,z_0}(x)>0=F_0(x)$ for $x<0$.

\paragraph*{Step 2: Use truncation to construct $G_\varepsilon$.}
Set
\begin{equation*}
 \alpha_\varepsilon=\varepsilon^4,
 \qquad
 \Phi(z_\varepsilon)=\alpha_\varepsilon,
 \qquad
 F_\varepsilon(x)=\Phi(z_\varepsilon+\varepsilon x).
\end{equation*}
Let $\widetilde Y_\varepsilon$ have CDF $F_\varepsilon$, put $Y_\varepsilon=\widetilde Y_\varepsilon\vee0$, and denote its
CDF by
\begin{equation*}
 G_\varepsilon(x)=
 \begin{cases}
 0,&x<0,\\
 F_\varepsilon(x),&x\ge0,
 \end{cases}
 \qquad G_\varepsilon(0)=\alpha_\varepsilon.
\end{equation*}
Then $F_0\ge G_\varepsilon$.  The truncation error is controlled by the
following lemma.

\begin{lemma}
\label{lem:upper-cutoff-error}
For all sufficiently small $\varepsilon$ and all
$x\ge\kappa\varepsilon^2$,
\begin{equation}
 0\le
 \mathcal T_{p_+}F_\varepsilon(x)
 -\mathcal T_{p_+}G_\varepsilon(x)
 \le C_{\lambda}\varepsilon^4q(z_\varepsilon+\varepsilon x).
 \label{eq:upper-operator-cutoff-error}
\end{equation}
\end{lemma}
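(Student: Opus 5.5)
We need to bound $\mathcal T_{p_+}F_\varepsilon(x)-\mathcal T_{p_+}G_\varepsilon(x)$ for $x\ge\kappa\varepsilon^2$. The plan is to work directly with the probabilistic definition \eqref{eq:CDF-operator-definition} rather than with the density formula of Lemma~\ref{lem:exact-cdf}, since $G_\varepsilon$ has an atom at $0$.

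\textbf{Coupling and the lower bound.} Couple $\widetilde U,\widetilde V$ i.i.d.\ with CDF $F_\varepsilon$, and set $U=\widetilde U\vee0$, $V=\widetilde V\vee0$; these are i.i.d.\ with CDF $G_\varepsilon$. Since $U\ge\widetilde U$ and $V\ge\widetilde V$, coordinatewise monotonicity of $g_\pm$ gives $g_A(U,V)\ge g_A(\widetilde U,\widetilde V)$. Hence
\[
0\le\mathcal T_{p_+}F_\varepsilon(x)-\mathcal T_{p_+}G_\varepsilon(x)
=\Pp\bigl(g_A(\widetilde U,\widetilde V)\le x<g_A(U,V)\bigr),
\]
which is the lower bound.

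\textbf{Localising the event.} The event above requires that at least one coordinate was truncated, i.e.\ $\widetilde U<0$ or $\widetilde V<0$. By symmetry it suffices to bound twice the probability with $\widetilde U<0$, which has probability $\Phi(z_\varepsilon)=\alpha_\varepsilon=\varepsilon^4$. Consider the two gates separately.

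\emph{Series case.} Here $g_+(U,V)\le(U\vee V)+\log2$. On the event we then need $g_+(U,V)>x\ge\kappa\varepsilon^2$ together with $g_+(\widetilde U,\widetilde V)\le x$. Since $g_+(\widetilde U,\widetilde V)\ge\widetilde V$, this forces $\widetilde V\le x$. Moreover, $g_+(0,V)>x$ requires $V>x-\log2$, because $g_+(0,V)\le(0\vee V)+\log 2$ and $x>0$; indeed it requires $\log(1+e^{V})>x$. So $\widetilde V\in(\log(e^x-1),x]$, an interval of length $O(1)$ (for $x$ of order $\varepsilon^2$ it is roughly $(\log x, x]$). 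In $z$-coordinates this interval has length $O(\varepsilon)$ around $\xi:=z_\varepsilon+\varepsilon x$, or extends to the left of it.

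\emph{Parallel case.} Here $g_-(\widetilde U,\widetilde V)\le x$ and $g_-(U,V)>x$. The second condition needs $\widetilde V>x$, and the first then forces $\widetilde U$ to be very negative, which is already included in $\widetilde U<0$. Since $g_-(0,V)\le 0<x$, this case is in fact impossible.

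\textbf{Main estimate.} Everything therefore reduces to bounding
\[
\varepsilon^4\,\Pp\bigl(\widetilde V\in(\log(e^x-1),x]\bigr)\le\varepsilon^4\,\Phi(\xi).
\]
The main obstacle is to convert this into $C\varepsilon^4 q(\xi)$ rather than $\varepsilon^4\Phi(\xi)$ when $\xi$ is large. For $\xi\le 0$ the tail ratio \eqref{eq:full-line-tail-ratios} gives $\Phi\le Cq$. For large $\xi$ use instead the length $O(1)$ of the interval: the probability is at most $\Phi(\xi)-\Phi(\xi-C\varepsilon)\le C\varepsilon\sup_{[\xi-C\varepsilon,\xi]}q\le C'\varepsilon q(\xi)$, by \eqref{eq:full-line-relative-bounds}, since $|q'|\le Mq$ gives $\log q$ Lipschitz.

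Some care is needed when $x$ is small, because then $\log(e^x-1)\to-\infty$ and the interval becomes long. In that regime, however, $\xi=z_\varepsilon+O(\varepsilon)$ is far to the left, and $\Phi(\xi)\le Cq(\xi)$ holds there by the left tail ratio. Combining the two regimes, split at $\xi\le0$ versus $\xi>0$ (with $x\ge\kappa\varepsilon^2$ ensuring $\log(e^x-1)>-\infty$), yields \eqref{eq:upper-operator-cutoff-error}.
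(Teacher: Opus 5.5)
Your argument is correct in substance and takes a somewhat different route from the paper, but one intermediate claim is false as written. The constraint $\log(1+e^{V})>x$ concerns the truncated variable $V=\widetilde V\vee0$, not $\widetilde V$. For $x<\log2$ it holds automatically, since $V\ge0$. So configurations with both $\widetilde U,\widetilde V<0$ lie in the event even when $\widetilde V\le\log(e^x-1)$, and they contribute order $\alpha_\varepsilon^2$. Consequently, bounding the event by $2\alpha_\varepsilon\Pp\bigl(\widetilde V\in(\log(e^x-1),x]\bigr)$ fails for $x$ of order $\varepsilon^2$: that quantity is only of order $\alpha_\varepsilon^2\,\varepsilon\log(1/\varepsilon)$.

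The slip is harmless. In the regime $\xi\le0$ you only use $2\alpha_\varepsilon\Pp(\widetilde V\le x)=2\alpha_\varepsilon\Phi(\xi)$, which follows from $g_+(\widetilde U,\widetilde V)\ge\widetilde V$ alone. You then need $\Phi\le Cq$ on all of $(-\infty,0]$. The tail ratio gives this only for $\xi\le-L$; on $[-L,0]$ you also need positivity and continuity of $q$. For $\xi>0$ you should state explicitly that $x>-z_\varepsilon/\varepsilon>\log2$ for small $\varepsilon$. This rules out double truncation and makes your window $(\log(e^x-1),x]$ correct, with length $-\log(1-e^{-x})\le\log2$.

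The paper splits by the number of truncated inputs rather than by the sign of $\xi$.
\begin{itemize}
\item With exactly one truncated input, the other lies in $(x-d(x),x]$, where $d(x)=x$ for $x<\log2$ and $d(x)\le\log2$ for every $x\ge0$. This gives $C\alpha_\varepsilon\varepsilon q(\xi)$ uniformly in $x$.
\item Double truncation contributes $\alpha_\varepsilon^2\mathbf 1_{\{0\le x<\log2\}}$. This is absorbed using $q(\xi)\ge c\Phi(\xi)\ge c\alpha_\varepsilon$ there, because $z_\varepsilon+\varepsilon\log2\to-\infty$.
\end{itemize}
Your split avoids the small-$x$ analysis of the window, at the cost of the cruder bound $\alpha_\varepsilon\Phi(\xi)$ on the whole left half-line. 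The paper's split is more informative: it shows single truncation costs only $O(\varepsilon^5q)$, so the $\varepsilon^4$ order comes solely from double truncation near $x\in[0,\log2)$.
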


For all sufficiently small $\varepsilon$ and
$x\ge\kappa\varepsilon^2$, Lemma~\ref{lem:upper-cutoff-error} and
\eqref{eq:strict-untruncated-upper} give
\[
\begin{aligned}
 \mathcal T_{p_+}G_\varepsilon(x)
 &\ge F_\varepsilon(x-\kappa\varepsilon^2)+\left(
     \frac{\kappa-\kappa_{\lambda}}2\varepsilon^3
     -C_{\lambda}\varepsilon^4
   \right)q(z_\varepsilon+\varepsilon x)\\
 &\ge G_\varepsilon(x-\kappa\varepsilon^2).
\end{aligned}
\]
Here the last inequality uses the definition of $G_\varepsilon$:
\(
 F_\varepsilon(x-\kappa\varepsilon^2)
 =G_\varepsilon(x-\kappa\varepsilon^2)
\)
for $x-\kappa\varepsilon^2\ge0.$

For $x<\kappa\varepsilon^2$,
$G_\varepsilon(x-\kappa\varepsilon^2)=0$, while
$\mathcal T_{p_+}G_\varepsilon(x)\ge0$ because it is a CDF.  Thus the two
cases give the global inequality
\begin{equation*}
 \mathcal T_{p_+}G_\varepsilon(x)
 \ge G_\varepsilon(x-\kappa\varepsilon^2),
 \qquad x\in\mathbb R.
\end{equation*}
Moreover, Proposition~\ref{prop:full-line-distribution} implies that the law
with CDF $\Phi$ has a finite absolute first moment.  Hence $\widetilde Y_\varepsilon$, and therefore
$Y_\varepsilon$, is integrable.  Thus $G_\varepsilon$ has the two properties
used above.

We now prove the two estimates used in the construction.

\begin{proof}[Proof of Lemma~\ref{lem:weighted-consistency}]

The density of $F_{\varepsilon,z_0}$ is
$\rho(x)=\varepsilon q(z_0+\varepsilon x)$.
For fixed $r\ge0$ and
$0\le s\le h(r)$, set
\[
 Q_{s,r}(t)=q(\xi+ts)q(\xi+t(s+r)).
\]
Then
\[
 I_+(F_{\varepsilon,z_0};x)-I_-(F_{\varepsilon,z_0};x)
 =\varepsilon^2\int_0^\infty\int_0^{h(r)}
 \bigl(Q_{s,r}(\varepsilon)-Q_{s,r}(-\varepsilon)\bigr)
 \,\dd s\,\dd r.
\]
The Taylor expansion gives
\[
 Q_{s,r}(\varepsilon)-Q_{s,r}(-\varepsilon)
 =2\varepsilon(2s+r)q(\xi)q'(\xi)+R_{s,r},
\]
where
\[
 |R_{s,r}|\le\frac{\varepsilon^3}{3}
 \sup_{|t|\le\varepsilon}|Q_{s,r}'''(t)|.
\]

To bound the remainder, we first derive two estimates on $q$. Write $M:=M_\lambda$, and let $C_\lambda$ denote constants that may change from line to line. Since $q>0$, the case $j=1$ in
\eqref{eq:full-line-relative-bounds} gives
\begin{equation}
 e^{-M|y|}q(z)\le q(z+y)\le e^{M|y|}q(z),
 \qquad z,y\in\mathbb R.
 \label{eq:q-log-lipschitz}
\end{equation}
The monotonicity and range of $\Phi$ imply
$\int_{\mathbb R}q\le1$.  Hence
\begin{equation}
 1\ge\int_{z-1}^{z+1}q(u)\,\dd u
 \ge 2e^{-M}q(z),
 \qquad
 \sup_{z\in\mathbb R}q(z)\le\frac{e^M}{2}.
 \label{eq:q-uniform-bound}
\end{equation}
Thus every occurrence of $q(z)^2$ below is bounded by a constant times
$q(z)$, uniformly in $z$.

By Leibniz's rule and
\eqref{eq:full-line-relative-bounds}, each term in
$Q_{s,r}'''(t)$ is bounded, up to a constant depending only on $M$, by
$s^j(s+r)^{3-j}q(\xi+ts)q(\xi+t(s+r))$ for some $0\le j\le3$.
Since $0\le s\le h(r)\le\log2$,
\eqref{eq:q-log-lipschitz} therefore gives
\[
\begin{aligned}
 \sup_{|t|\le\varepsilon}|Q_{s,r}'''(t)|
 &\le C_\lambda(2s+r)^3
   \sup_{|t|\le\varepsilon}
   q(\xi+ts)q(\xi+t(s+r))\\
 &\le C_\lambda(1+r)^3e^{2M\varepsilon(1+r)}q(\xi)^2.
\end{aligned}
\]

Choose $\varepsilon_\lambda>0$ so that
$2M\varepsilon_\lambda\le1/2$. For
$0<\varepsilon\le\varepsilon_\lambda$, the contribution of $R_{s,r}$ to
$I_+-I_-$ is bounded in absolute value by
\[
 \begin{aligned}
 \varepsilon^2
 \int_0^\infty\int_0^{h(r)}
 |R_{s,r}|\,\dd s\,\dd r
 &\le C_\lambda\varepsilon^5q(\xi)^2
   \int_0^\infty h(r)(1+r)^3
   e^{2M\varepsilon(1+r)}\,\dd r\\
 &\le C_\lambda\varepsilon^5q(\xi)^2
   \int_0^\infty(1+r)^3e^{-r/2}\,\dd r\\
 &\le C_\lambda\varepsilon^5q(\xi).
 \end{aligned}
\]
Here the second inequality uses $h(r)\le e^{-r}$, with the constant
$e^{1/2}$ absorbed into $C_\lambda$, and the last one follows from
\eqref{eq:q-uniform-bound}.

The linear term equals
\[
 2\varepsilon^3q(\xi)q'(\xi)
 \int_0^\infty\int_0^{h(r)}(2s+r)\,\dd s\,\dd r
 =a\varepsilon^3q(\xi)q'(\xi)
\]
by Lemma~\ref{lem:diffusion-coefficient}.  Together with the preceding
remainder estimate, this proves
\eqref{eq:weighted-difference}.

The same relative estimate applied
to the sum of the two products, now with the prefactor $\varepsilon^2$, gives
\[
 I_++I_-
 \le C_\lambda\varepsilon^2q(\xi)^2
 \int_0^\infty e^{-r/2}\,\dd r
 \le C_\lambda\varepsilon^2q(\xi),
\]
and proves \eqref{eq:weighted-sum}.
\end{proof}

\begin{proof}[Proof of Lemma~\ref{lem:upper-cutoff-error}]
Fix $x\ge\kappa\varepsilon^2\ge0$.  Let $Z_1,Z_2$ be
independent with CDF $F_\varepsilon$, and set
$\widetilde Z_i=Z_i\vee0$.  Then $\widetilde Z_1,\widetilde Z_2$ are
independent with CDF $G_\varepsilon$.  Since $F_\varepsilon$ is
continuous,
\[
 \Pp(Z_i<0)=F_\varepsilon(0)=\alpha_\varepsilon.
\]

For the parallel-composition map, the two input pairs agree when $Z_1,Z_2\ge0$.
If at least one original input is negative, then both
$g_-(Z_1,Z_2)$ and
$g_-(\widetilde Z_1,\widetilde Z_2)$ are negative.  Therefore the indicators
of the events $\{g_-(Z_1,Z_2)\le x\}$ and
$\{g_-(\widetilde Z_1,\widetilde Z_2)\le x\}$ coincide for $x\ge0$, and hence
\[
 P_{F_\varepsilon}(x)=P_{G_\varepsilon}(x).
\]

For the series-composition map, coordinatewise monotonicity gives
\[
 g_+(Z_1,Z_2)
 \le g_+(\widetilde Z_1,\widetilde Z_2).
\]
Thus
$S_{F_\varepsilon}(x)-S_{G_\varepsilon}(x)$ is the probability that
\[
 g_+(Z_1,Z_2)\le x
 <g_+(\widetilde Z_1,\widetilde Z_2).
\]
If $Z_1,Z_2<0$, then $g_+(Z_1,Z_2)<\log2$, whereas
$g_+(\widetilde Z_1,\widetilde Z_2)=\log2$.  This case therefore
contributes only when $0\le x<\log2$, and its contribution is at most
$\alpha_\varepsilon^2$.

Suppose next that $Z_1<0\le Z_2$.  Since
$g_+(Z_1,Z_2)\le x<g_+(0,Z_2),$
we have
$Z_2<g_+(Z_1,Z_2)\le x$, and hence $Z_2\le x$.  If
$0\le x<\log2$, this gives
$0\le Z_2\le x$.  If $x\ge\log2$, the additional inequality
\[
 x<g_+(0,Z_2)=\log(1+e^{Z_2})
\]
is equivalent to $\log(e^x-1)<Z_2$.  Define
\begin{equation*}
\begin{aligned}
 d(0)&=0,\\
 d(x)&=
 \begin{cases}
  x,&0<x<\log2,\\
  -\log(1-e^{-x}),&x\ge\log2,
 \end{cases}\\
 0&\le d(x)\le\log2.
\end{aligned}
\end{equation*}
Indeed, for $x\ge\log2$ we have
$\log(e^x-1)=x-d(x)$ and $1-e^{-x}\ge1/2$, which gives
$d(x)\le\log2$.  Independence and the same argument with the two
inputs exchanged now yield
\begin{equation*}
\begin{aligned}
 0\le S_{F_\varepsilon}(x)-S_{G_\varepsilon}(x)
 \le \alpha_\varepsilon^2\ind_{\{0\le x<\log2\}}+2\alpha_\varepsilon
 \bigl(F_\varepsilon(x)-F_\varepsilon(x-d(x))\bigr).
\end{aligned}
\end{equation*}

Put $\xi=z_\varepsilon+\varepsilon x$.  Since $q>0$,
\eqref{eq:full-line-relative-bounds} gives
$|(\log q)'|\le M_\lambda$.  Consequently,
\[
\begin{aligned}
 F_\varepsilon(x)-F_\varepsilon(x-d(x))
 &=\varepsilon\int_0^{d(x)}q(\xi-\varepsilon s)\,\dd s\\
 &\le\varepsilon d(x)e^{M_\lambda\varepsilon d(x)}q(\xi)
 \le C_\lambda\varepsilon q(\xi).
\end{aligned}
\]

Moreover, $\Phi(z_\varepsilon)=\varepsilon^4\to0$, so that
$z_\varepsilon+\varepsilon\log2\to-\infty$.  The left-tail ratio in
\eqref{eq:full-line-tail-ratios} therefore gives, for all sufficiently
small $\varepsilon$,
\[
 q(z_\varepsilon+\varepsilon x)
 \ge c_\lambda\Phi(z_\varepsilon+\varepsilon x)
 \ge c_\lambda\Phi(z_\varepsilon)
 =c_\lambda\alpha_\varepsilon,
 \qquad 0\le x\le\log2.
\]
Thus, for $0\le x\le\log2$, the inequality
$\alpha_\varepsilon^2\le
c_\lambda^{-1}\alpha_\varepsilon q(\xi)$ holds.  Using also
$p_+\le1$, $\varepsilon\le1$, and the equality of the parallel-composition
CDFs, we obtain
\[
\begin{aligned}
 0
 &\le \mathcal T_{p_+}F_\varepsilon(x)
       -\mathcal T_{p_+}G_\varepsilon(x)
 =p_+\bigl(S_{F_\varepsilon}(x)-S_{G_\varepsilon}(x)\bigr)\\
 &\le \alpha_\varepsilon^2\ind_{\{0\le x<\log2\}}
       +C_\lambda\alpha_\varepsilon\varepsilon q(\xi)
 \le C_\lambda\alpha_\varepsilon q(\xi)
 =C_\lambda\varepsilon^4q(z_\varepsilon+\varepsilon x).
\end{aligned}
\]
This proves \eqref{eq:upper-operator-cutoff-error}.
\end{proof}
\subsection{Lower bound}

Fix $0<\lambda<\lambda_*$ and $0<\kappa<\kappa_{\lambda}$.  It is enough to construct an
integrable \(Z_\varepsilon\) with CDF \(H_\varepsilon\) such that
\begin{align}
 &F_0\ge H_\varepsilon, \label{low_barrier_init}\\
 &\mathcal T_{p_-}H_\varepsilon(x)
 \ge H_\varepsilon(x+\kappa\varepsilon^2),
 \qquad x\in\mathbb R.\label{low_one_step}
\end{align}
Indeed, if such a CDF $H_\varepsilon$ exists, the same induction gives
\[
 F_n^-(x)\ge H_\varepsilon(x+n\kappa\varepsilon^2),
 \qquad
 X_n(p_-)\stle Z_\varepsilon-n\kappa\varepsilon^2.
\]
By integrability and Theorem~\ref{thm:logarithmic-speeds},
$v(p_-)\le-\kappa\varepsilon^2$.
By Lemma~\ref{lem:resistance-duality},
$v(p_+)=-v(p_-)$, and hence
\[
 v(p_+)\ge\kappa\varepsilon^2.
\]
Since $\delta=\varepsilon^3$, this gives
\[
 \liminf_{\delta\downarrow0}
 \frac{v(\frac12+\delta)}{\delta^{2/3}}
 \ge\kappa.
\]
Letting first $\kappa\uparrow\kappa_{\lambda}$ and then
$\lambda\uparrow\lambda_*$, and using \eqref{eq:beta-kappa},
gives the lower bound
\begin{equation}
 \liminf_{\delta\downarrow0}
 \frac{v(\frac12+\delta)}{\delta^{2/3}}
 \ge2\zeta(3)^{1/3}\lambda_*.
 \label{eq:sharp-lower-bound}
\end{equation}

It remains to construct an integrable $Z_\varepsilon$ whose CDF
$H_\varepsilon$ has the two properties in \eqref{low_barrier_init} and \eqref{low_one_step}.

\paragraph*{Step 1: Construct $H_\varepsilon$ and verify \eqref{low_barrier_init}.}
Let $\Psi=\Psi_{\lambda}$ and $q=\Psi'$ be as in
Proposition~\ref{prop:hard-edge-distribution}, using the zero extension of
$\Psi$, and define
\[
H_\varepsilon(x)=\Psi(\varepsilon x),\qquad x\in\mathbb R.
\]
This is an absolutely continuous CDF
with density
\[
\rho_\varepsilon(x)
=\varepsilon q(\varepsilon x)\mathbf 1_{\{x>0\}}.
\]
The density vanishes on $(-\infty,0)$
but has the positive right limit
$\rho_\varepsilon(0+)=\varepsilon q(0)>0$.
Since $H_\varepsilon$ is supported on $[0,\infty)$, we have $F_0\ge H_\varepsilon$.
Moreover, the finite-mean conclusion of
Proposition~\ref{prop:hard-edge-distribution} shows that the associated random
variable $Z_\varepsilon$ is integrable.  Thus only \eqref{low_one_step} remains to be verified.

\paragraph*{Step 2: Prove \eqref{low_one_step}.}
We start with a required  estimate for \eqref{low_one_step}.

\begin{lemma}
\label{lem:hard-edge-consistency}
There are $C_{\lambda}<\infty$ and $\varepsilon_{\lambda}>0$ such that, for
$0<\varepsilon<\varepsilon_{\lambda}$ and $x\ge0$,
\begin{equation}
 I_+(H_\varepsilon;x)-I_-(H_\varepsilon;x)
 \ge a\varepsilon^3q(\varepsilon x)q'(\varepsilon x)
      -C_{\lambda}\varepsilon^5q(\varepsilon x).
 \label{eq:hard-edge-one-sided-consistency}
\end{equation}
\end{lemma}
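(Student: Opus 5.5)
We follow the proof of Lemma~\ref{lem:weighted-consistency}, but we must handle the hard edge at $0$. Write $\rho_\varepsilon(y)=\varepsilon\overline q(\varepsilon y)\ind_{\{y>0\}}$, where $\overline q$ is the positive $C^3$ extension from Proposition~\ref{prop:hard-edge-distribution}. With $\xi=\varepsilon x$ and $x\ge0$, the integrand of $I_+$ only evaluates the density at the points $x+s$ and $x+s+r$, which are both $\ge x\ge0$. Hence the indicator is identically one there, and
\[
 I_+(H_\varepsilon;x)=\varepsilon^2\int_0^\infty\int_0^{h(r)}\overline Q_{s,r}(\varepsilon)\,\dd s\,\dd r,
 \qquad \overline Q_{s,r}(t)=\overline q(\xi+ts)\,\overline q(\xi+t(s+r)).
\]
For $I_-$ the evaluation points are $x-s$ and $x-s-r$, and the indicators can vanish there. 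Since $\overline q>0$, dropping the indicators only increases the integral:
\[
 I_-(H_\varepsilon;x)\le\varepsilon^2\int_0^\infty\int_0^{h(r)}\overline Q_{s,r}(-\varepsilon)\,\dd s\,\dd r.
\]
This one-sided comparison explains why the lemma is stated as an inequality. It reduces the claim to the two-sided estimate for the untruncated extended density.

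Next we repeat the Taylor argument verbatim, with $\overline q$ and $\overline M_\lambda$ in place of $q$ and $M_\lambda$. We expand
\[
 \overline Q_{s,r}(\varepsilon)-\overline Q_{s,r}(-\varepsilon)=2\varepsilon(2s+r)\overline q(\xi)\overline q'(\xi)+R_{s,r},
 \qquad |R_{s,r}|\le\tfrac{\varepsilon^3}{3}\sup_{|t|\le\varepsilon}|\overline Q_{s,r}'''(t)|.
\]
By Leibniz's rule and \eqref{eq:q-extension-bounds}, the third derivative satisfies
\[
 \sup_{|t|\le\varepsilon}|\overline Q_{s,r}'''(t)|\le C(1+r)^3e^{2\overline M_\lambda\varepsilon(1+r)}\,\overline q(\xi)^2,
\]
using the log-Lipschitz bound $\overline q(z+y)\le e^{\overline M_\lambda|y|}\overline q(z)$. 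Integrating against $h(r)\le e^{-r}$ with $2\overline M_\lambda\varepsilon\le1/2$ gives a remainder of order $C\varepsilon^5\overline q(\xi)^2$. The linear term integrates to $a\varepsilon^3\overline q\,\overline q'(\xi)$ by Lemma~\ref{lem:diffusion-coefficient}. Since $\xi\ge0$, we have $\overline q(\xi)=q(\xi)$ and $\overline q'(\xi)=q'(\xi)$, where derivatives at $0$ are taken from the right, consistent with the $C^3$ extension.

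Finally, we must bound $\overline q(\xi)^2$ by $C\,q(\xi)$ for $\xi\ge0$. This is the step needing care, because $\overline q$ is not assumed integrable on the negative half-line. We avoid that issue by working on $[0,\infty)$ only. There $\int_0^\infty q\le1$, and the log-Lipschitz bound for $q$ from \eqref{eq:subcritical-relative-bounds} gives
\[
 1\ge\int_\xi^{\xi+1}q\ge e^{-M_\lambda}q(\xi),
\]
so $\sup_{[0,\infty)}q\le e^{M_\lambda}$. Hence $\overline q(\xi)^2=q(\xi)^2\le e^{M_\lambda}q(\xi)$. Combining the three displays yields \eqref{eq:hard-edge-one-sided-consistency}.

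The main obstacle is the sign bookkeeping. The remainder enters $I_+-I_-$ with both signs, and the crucial point is that the only non-Taylor error (the missing mass in $I_-$) has the favorable sign, so it is simply discarded.
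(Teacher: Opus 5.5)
Your proposal is correct and follows essentially the same route as the paper: you compare $I_\pm(H_\varepsilon;x)$ with the untruncated integrals built from the extension $\overline q$, getting equality for $I_+$ (the indicator fails only on a null set when $x=s=0$) and domination for $I_-$. You then apply the symmetric Taylor expansion with the log-Lipschitz bound, the condition $2\overline M_\lambda\varepsilon\le1/2$, and a half-line bound $q(\xi)^2\le Cq(\xi)$ for $\xi\ge0$. The only cosmetic difference is that you derive that half-line bound from $M_\lambda$ in \eqref{eq:subcritical-relative-bounds} rather than from $\overline M_\lambda$, which is equally valid.
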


For $x\ge0$, Lemma~\ref{lem:hard-edge-consistency},
\eqref{eq:lower-profile-equation}, and the nonnegative last term in
\eqref{eq:exact-cdf-operator} give a one-step gain
\[
 \mathcal T_{p_-}H_\varepsilon(x)-H_\varepsilon(x)
 \ge \kappa_{\lambda}\varepsilon^3q(\varepsilon x)
      -C_{\lambda}\varepsilon^5q(\varepsilon x),
\]
whereas Taylor's theorem gives
\[
 H_\varepsilon(x+\kappa\varepsilon^2)-H_\varepsilon(x)
 =\kappa\varepsilon^3q(\varepsilon x)
  +O_{\lambda,\kappa}(\varepsilon^6q(\varepsilon x)).
\]
The strict inequality $\kappa<\kappa_{\lambda}$ therefore gives the desired
barrier on $[0,\infty)$. The remaining case $x<0$ is handled separately in
Proposition~\ref{prop:hard-edge-barrier}.

\begin{proposition}
\label{prop:hard-edge-barrier}
Fix $0<\lambda<\lambda_*$ and $0<\kappa<\kappa_{\lambda}$.  For all sufficiently small
$\varepsilon$,
\begin{equation*}
 \mathcal T_{p_-}H_\varepsilon(x)
 \ge H_\varepsilon(x+\kappa\varepsilon^2),
 \qquad x\in\mathbb R.
\end{equation*}
\end{proposition}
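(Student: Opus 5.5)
The plan is to split into the three regimes $x\ge0$, $-\kappa\varepsilon^2\le x<0$, and $x<-\kappa\varepsilon^2$. For $x\ge0$ the inequality was already derived just before the statement: use Lemma~\ref{lem:hard-edge-consistency}, \eqref{eq:lower-profile-equation}, and the sign of the term $-2\delta(I_++I_-)$ with $\delta=-\varepsilon^3$ in \eqref{eq:exact-cdf-operator}, compared with the Taylor expansion of the translation. The margin $(\kappa_\lambda-\kappa)\varepsilon^3q(\varepsilon x)$ absorbs the $O(\varepsilon^5q)$ and $O(\varepsilon^6 q)$ errors. Here the bound $|q''|\le M_\lambda q$ from \eqref{eq:subcritical-relative-bounds} makes the Taylor remainder relative to $q(\varepsilon x)$. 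For $x<-\kappa\varepsilon^2$ the right side is $H_\varepsilon(x+\kappa\varepsilon^2)=0$, so the inequality is trivial.

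The real work is the thin strip $-\kappa\varepsilon^2\le x<0$. There $H_\varepsilon(x+\kappa\varepsilon^2)=\Psi(\varepsilon(x+\kappa\varepsilon^2))\le \varepsilon q(0)\kappa\varepsilon^2(1+O(\varepsilon^3))\le C\varepsilon^3$. On the left side, $H_\varepsilon(x)=0$ and $F(1-F)=0$, so by \eqref{eq:exact-cdf-operator},
\[
\mathcal T_{p_-}H_\varepsilon(x)=I_+(H_\varepsilon;x)-I_-(H_\varepsilon;x)+2\varepsilon^3(I_++I_-)\ge I_+(H_\varepsilon;x)-I_-(H_\varepsilon;x).
\]
Since $\rho_\varepsilon$ vanishes on $(-\infty,0)$ and $x<0$, we get $I_-(H_\varepsilon;x)=0$. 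It remains to show that
\[
I_+(H_\varepsilon;x)=\int_0^\infty\int_0^{h(r)}\rho_\varepsilon(x+s)\rho_\varepsilon(x+s+r)\,\dd s\,\dd r
\]
is at least $H_\varepsilon(x+\kappa\varepsilon^2)$. Directly, $I_+$ is an $\varepsilon^2$-size quantity. On $s\in(|x|,h(r))$, where $|x|\le\kappa\varepsilon^2$ is much smaller than $h(r)$ for bounded $r$, the integrand is about $\varepsilon^2q(0)^2$ by continuity of $q$ at $0$ and the log-Lipschitz bound. So
\[
I_+\ge c\,\varepsilon^2q(0)^2\int_0^1(h(r)-\kappa\varepsilon^2)_+\,\dd r\ge c'\varepsilon^2 .
\]
This dominates $C\varepsilon^3$ for small $\varepsilon$, which gives the inequality on the strip. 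This uses $q(0)=\beta W_\lambda(0)>0$, which holds precisely because $\lambda<\lambda_*$ (hard edge).

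The expected obstacle is not the strip but the junction near $x=0^+$ in the first regime. There Lemma~\ref{lem:hard-edge-consistency} must already handle the discontinuity of $\rho_\varepsilon$ at $0$: for small $x\ge0$, $I_-$ integrates over points $x-s-r<0$ where the density is zero. I take that lemma as given, since it is stated for all $x\ge0$. Its one-sidedness is exactly what the truncation requires, because dropping density only decreases $I_-$.

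I would also double-check that the $O(\varepsilon^6q)$ Taylor remainder for $H_\varepsilon(x+\kappa\varepsilon^2)-H_\varepsilon(x)$ with $x\ge0$ only involves $q$ on $[0,\infty)$. It does, since $x+\kappa\varepsilon^2>0$, so the one-sided derivative bounds suffice.
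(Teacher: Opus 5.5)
Your proposal is correct and follows the paper's proof essentially step for step. You use the same three-way split of $x$, and for $x\ge0$ the same combination of Lemma~\ref{lem:hard-edge-consistency}, \eqref{eq:lower-profile-equation} and the Taylor expansion of the translation. On the strip $-\kappa\varepsilon^2\le x<0$ you make the same comparison: an order-$\varepsilon^2$ lower bound on $I_+(H_\varepsilon;x)$, coming from $q(0)>0$ and $I_-(H_\varepsilon;x)=0$, against an order-$\varepsilon^3$ upper bound on $H_\varepsilon(x+\kappa\varepsilon^2)$. The only cosmetic difference is that the paper restricts $I_+$ to the fixed rectangle $0\le r\le1$, $h(1)/2\le s\le 3h(1)/4$ and uses continuity of $q$ at $0$, whereas you integrate over $|x|<s<h(r)$ and use the log-Lipschitz bound.
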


Proposition~\ref{prop:hard-edge-barrier} proves the remaining case, so $H_\varepsilon$ has the two properties used above.

We now prove Lemma~\ref{lem:hard-edge-consistency} and Proposition~\ref{prop:hard-edge-barrier}.

\begin{proof}[Proof of Lemma~\ref{lem:hard-edge-consistency}]
Fix $x\ge0$ and put $\xi=\varepsilon x$.  Let $\overline q$ be the
strictly positive $C^3$ extension from
\eqref{eq:q-extension-bounds}, and define
\begin{align*}
 \overline I_+(x)
 &=\varepsilon^2\int_0^\infty\int_0^{h(r)}
   \overline q\bigl(\xi+\varepsilon s\bigr)
   \overline q\bigl(\xi+\varepsilon(s+r)\bigr)
   \,\dd s\,\dd r,\\
 \overline I_-(x)
 &=\varepsilon^2\int_0^\infty\int_0^{h(r)}
   \overline q\bigl(\xi-\varepsilon s\bigr)
   \overline q\bigl(\xi-\varepsilon(s+r)\bigr)
   \,\dd s\,\dd r.
\end{align*}
The density $\rho_\varepsilon$ of $H_\varepsilon$ satisfies
\[
 0\le\rho_\varepsilon(y)
 \le\varepsilon\overline q(\varepsilon y),
 \qquad y\in\mathbb R,
\]
with equality for $y>0$.  Since $x+s$ and $x+s+r$ are nonnegative,
the integrands defining $I_+(H_\varepsilon;x)$ and
$\overline I_+(x)$ therefore agree almost everywhere, while the
integrand defining $I_-(H_\varepsilon;x)$ is bounded above by the
one defining $\overline I_-(x)$.  Consequently,
\begin{equation}
 I_+(H_\varepsilon;x)=\overline I_+(x),
 \qquad
 I_-(H_\varepsilon;x)\le\overline I_-(x),
 \label{eq:hard-edge-extension-comparison}
\end{equation}
and hence
\[
 I_+(H_\varepsilon;x)-I_-(H_\varepsilon;x)
 \ge \overline I_+(x)-\overline I_-(x).
\]

For fixed $r\ge0$ and $0\le s\le h(r)$, set
\[
 \overline Q_{s,r}(t)
 =\overline q(\xi+ts)\,
  \overline q(\xi+t(s+r)).
\]
Then
\begin{equation}
 \overline I_+(x)-\overline I_-(x)
 =\varepsilon^2\int_0^\infty\int_0^{h(r)}
 \bigl(\overline Q_{s,r}(\varepsilon)
       -\overline Q_{s,r}(-\varepsilon)\bigr)
 \,\dd s\,\dd r.
 \label{eq:hard-edge-extension-difference}
\end{equation}
Applying Taylor's theorem at $0$ to $t=\pm\varepsilon$ and
subtracting gives
\begin{equation}
\begin{aligned}
 \overline Q_{s,r}(\varepsilon)
 -\overline Q_{s,r}(-\varepsilon)
 &=2\varepsilon\overline Q_{s,r}'(0)+\overline R_{s,r}
 =2\varepsilon(2s+r)
   \overline q(\xi)\overline q'(\xi)+\overline R_{s,r},\\
 |\overline R_{s,r}|
 &\le\frac{\varepsilon^3}{3}
 \sup_{|t|\le\varepsilon}|\overline Q_{s,r}'''(t)|.
\end{aligned}
 \label{eq:hard-edge-symmetric-Taylor}
\end{equation}

We next estimate the remainder.  Write
$\overline M=\overline M_\lambda$, and allow $C_\lambda$ to change
from line to line.  Since $\overline q>0$, the case $j=1$ in
\eqref{eq:q-extension-bounds} gives
$|(\log\overline q)'|\le\overline M$, and hence
\begin{equation}
 e^{-\overline M|y|}\overline q(z)
 \le \overline q(z+y)
 \le e^{\overline M|y|}\overline q(z),
 \qquad z,y\in\mathbb R.
 \label{eq:extension-log-Lipschitz}
\end{equation}
Moreover, since $\overline q=q$ on $[0,\infty)$ and $q$ is the
density of $\Psi$, we have
\[
 \int_0^\infty\overline q(u)\,\dd u=1.
\]
Thus, for every $\xi\ge0$, \eqref{eq:extension-log-Lipschitz} gives
\[
 1\ge\int_\xi^{\xi+1}\overline q(u)\,\dd u
 \ge e^{-\overline M}\overline q(\xi),
\]
and therefore
\begin{equation}
 \overline q(\xi)^2
 \le e^{\overline M}\overline q(\xi),
 \qquad \xi\ge0.
 \label{eq:extension-positive-halfline-bound}
\end{equation}
Choose $\varepsilon_\lambda>0$ so that
$2\overline M\varepsilon_\lambda\le1/2$, and assume henceforth that
$0<\varepsilon<\varepsilon_\lambda$.  Since
$0\le s\le h(r)\le\log2<1$, we have $2s+r\le2(1+r)$.  Hence
Leibniz's rule,
\eqref{eq:q-extension-bounds}, and
\eqref{eq:extension-log-Lipschitz} imply, for all
$|t|\le\varepsilon$,
\[
 \begin{aligned}
 |\overline Q_{s,r}'''(t)|
 &\le C_\lambda
 \sum_{j=0}^3\binom3j
 s^j(s+r)^{3-j}
 \overline q(\xi+ts)
 \overline q(\xi+t(s+r))\\
 &=C_\lambda(2s+r)^3
 \overline q(\xi+ts)
 \overline q(\xi+t(s+r))\\
 &\le C_\lambda(1+r)^3
 e^{2\overline M\varepsilon(1+r)}
 \overline q(\xi)^2.
 \end{aligned}
\]
The contribution of $\overline R_{s,r}$ to
\eqref{eq:hard-edge-extension-difference} satisfies
\[
 \begin{aligned}
 \left|
 \varepsilon^2\int_0^\infty\int_0^{h(r)}
 \overline R_{s,r}\,\dd s\,\dd r
 \right|
 &\le C_\lambda\varepsilon^5\overline q(\xi)^2
 \int_0^\infty h(r)(1+r)^3
 e^{2\overline M\varepsilon(1+r)}\,\dd r\\
 &\le C_\lambda\varepsilon^5\overline q(\xi)^2
 \int_0^\infty(1+r)^3e^{-r/2}\,\dd r\\
 &\le C_\lambda\varepsilon^5\overline q(\xi).
 \end{aligned}
\]
Here the second inequality uses $h(r)\le e^{-r}$, with the factor
$e^{1/2}$ absorbed into $C_\lambda$, and the last one uses
\eqref{eq:extension-positive-halfline-bound}.

The linear term in \eqref{eq:hard-edge-symmetric-Taylor} contributes
\[
 2\varepsilon^3\overline q(\xi)\overline q'(\xi)
 \int_0^\infty\int_0^{h(r)}(2s+r)\,\dd s\,\dd r
 =a\varepsilon^3\overline q(\xi)\overline q'(\xi)
\]
by Lemma~\ref{lem:diffusion-coefficient}.  Consequently,
\begin{equation}
 \left|
 \overline I_+(x)-\overline I_-(x)
 -a\varepsilon^3\overline q(\xi)\overline q'(\xi)
 \right|
 \le C_\lambda\varepsilon^5\overline q(\xi).
 \label{eq:hard-edge-extended-consistency}
\end{equation}

Finally, since $\xi\ge0$ and $\overline q$ agrees with $q$ on
$[0,\infty)$,
\[
 \overline q(\xi)=q(\xi),
 \qquad
 \overline q'(\xi)=q'(\xi),
\]
where $q'(0)$ denotes the right derivative $q'(0+)$.  Combining
\eqref{eq:hard-edge-extension-comparison} and
\eqref{eq:hard-edge-extended-consistency}, and recalling that
$\xi=\varepsilon x$, yields
\[
 I_+(H_\varepsilon;x)-I_-(H_\varepsilon;x)
 \ge a\varepsilon^3q(\varepsilon x)q'(\varepsilon x)
     -C_\lambda\varepsilon^5q(\varepsilon x),
\]
which proves \eqref{eq:hard-edge-one-sided-consistency}.
\end{proof}

\begin{proof}[Proof of Proposition~\ref{prop:hard-edge-barrier}]
Put $s_\varepsilon=\kappa\varepsilon^2$.
If $x<-s_\varepsilon$, the right-hand side is zero. The desired inequality is therefore immediate because
$\mathcal T_{p_-}H_\varepsilon$ is a CDF.

Suppose $x\ge0$ and write $\xi=\varepsilon x$.  Since the last term in
\eqref{eq:exact-cdf-operator} is nonnegative when $p=p_-$,
Lemma~\ref{lem:hard-edge-consistency} and
\eqref{eq:lower-profile-equation} give
\begin{align*}
 \mathcal T_{p_-}H_\varepsilon(x)-H_\varepsilon(x)
 &\ge \varepsilon^3
 \bigl(aq(\xi)q'(\xi)+2\Psi(\xi)(1-\Psi(\xi))\bigr)
 -C_{\lambda}\varepsilon^5q(\xi)\\
 &=\kappa_{\lambda}\varepsilon^3q(\xi)
 -C_{\lambda}\varepsilon^5q(\xi).
\end{align*}
On the other hand, by \eqref{eq:subcritical-relative-bounds} and Taylor's
theorem,
\[
 H_\varepsilon(x+s_\varepsilon)-H_\varepsilon(x)
 =\kappa\varepsilon^3q(\xi)+O_{\lambda,\kappa}(\varepsilon^6q(\xi)).
\]
Because $\kappa<\kappa_{\lambda}$, the desired inequality follows for all
$x\ge0$ when $\varepsilon$ is small.

It remains to consider $-s_\varepsilon\le x<0$.  Here
$H_\varepsilon(x)=0$ and $I_-(H_\varepsilon;x)=0$, so
\[
 \mathcal T_{p_-}H_\varepsilon(x)
 =(1+2\varepsilon^3)I_+(H_\varepsilon;x)
 \ge I_+(H_\varepsilon;x).
\]
Let $q_0=q(0+)>0$.  Since $q(z)\to q_0$ as $z\downarrow0$, choose
$\eta>0$ such that
 $q(z)\ge\frac{q_0}{2}$ for $ 0\le z\le\eta$.
Set $b=h(1)/4$, and take $\varepsilon$ sufficiently small that
\[
 s_\varepsilon<b/2,
 \qquad
 \varepsilon(3b+1)\le\eta.
\]
Since $h$ is decreasing, for $0\le r\le1$ we have
$h(r)\ge h(1)=4b$, so the rectangle
\[
 0\le r\le1,
 \qquad
 2b\le s\le3b
\]
is contained in the integration region
$\{r\ge0,\ 0\le s\le h(r)\}$.  Moreover, for
$-s_\varepsilon\le x<0$ and $(r,s)$ in this rectangle,
\[
 0<2b-s_\varepsilon
 \le x+s\le x+s+r\le3b+1.
\]
It follows that
\[
 \rho_\varepsilon(x+s)
 \ge\frac{\varepsilon q_0}{2},
 \qquad
 \rho_\varepsilon(x+s+r)
 \ge\frac{\varepsilon q_0}{2}.
\]
Restricting $I_+(H_\varepsilon;x)$ to this rectangle therefore gives
\begin{equation}
 \begin{aligned}
 I_+(H_\varepsilon;x)
 &\ge\int_0^1\int_{2b}^{3b}
   \rho_\varepsilon(x+s)
   \rho_\varepsilon(x+s+r)\,\dd s\,\dd r\\
 &\ge\int_0^1\int_{2b}^{3b}
   \frac{\varepsilon^2q_0^2}{4}\,\dd s\,\dd r
 =\frac{bq_0^2}{4}\varepsilon^2.
 \end{aligned}
 \label{eq:boundary-layer-positive-mass}
\end{equation}

Since $q(z)\to q_0$ as $z\downarrow0$, we have
$q(z)\le2q_0$ for all sufficiently small $z>0$.  Moreover, on the
present strip, $0\le x+s_\varepsilon\le s_\varepsilon$.  Hence, for all
sufficiently small $\varepsilon$,
\[
\begin{aligned}
 H_\varepsilon(x+s_\varepsilon)
 =\Psi\bigl(\varepsilon(x+s_\varepsilon)\bigr)
 =\int_0^{\varepsilon(x+s_\varepsilon)}q(z)\,\dd z
 \le2q_0\varepsilon(x+s_\varepsilon)
 \le2q_0\varepsilon s_\varepsilon
 =2\kappa q_0\varepsilon^3.
\end{aligned}
\]
  The lower bound
\eqref{eq:boundary-layer-positive-mass} dominates this quantity, completing
the proof.
\end{proof}

\begin{proof}[Proof of Theorem~\ref{thm:near-critical-speed}]
The existence, positivity, and minimality of $\lambda_*$ for the
boundary-value problem \eqref{eq:main-W-bvp} follow from
Proposition~\ref{prop:Cstar-halfline}.
  Combining \eqref{eq:sharp-upper-bound} and
\eqref{eq:sharp-lower-bound}, and using
Lemma~\ref{lem:resistance-duality}, yields
\eqref{eq:main-near-critical-limit}.
For the first-moment assertions, Theorem~\ref{thm:first-moment-logarithmic-rates}
gives
$\gamma_R(1/2+\delta)=v_R(1/2+\delta)$.  On the subcritical side, since
$\delta^{2/3}\gg\delta$, \eqref{eq:main-near-critical-limit},
Theorem~\ref{thm:first-moment-logarithmic-rates}, and
$\log(1-2\delta)\sim-2\delta$ give, for all sufficiently small $\delta>0$,
\[
 \gamma_R\left(\frac12-\delta\right)
 =v_R\left(\frac12-\delta\right)\vee\log(1-2\delta)
 =\log(1-2\delta).
\]
Together with \eqref{eq:main-near-critical-limit}, these identities yield
\eqref{eq:resistance-first-moment-near-critical}.  This completes the proof.
\end{proof}

\begin{remark}[Why the exponent $2/3$?]
\label{rem:two-thirds-heuristic}
The exponent $2/3$ is suggested by a balance of scales. Let
$K_\eta(x)=K(\eta x)$, where $K$ is a smooth CDF for which the one-step
expansion used above is valid. The first nonzero symmetric contribution to the one-step
update has order $\eta^3$, whereas the bias contributes at order $\delta$.
Balancing these terms gives $\eta^3$ of order $\delta$. A translation by $s$
changes $K_\eta$ at order $s\eta$, so matching the translation to the one-step
update gives $s\eta$ of order $\delta$. Thus $\eta$ has order
$\delta^{1/3}$ and $s$ has order $\delta^{2/3}$. The upper and lower comparison
arguments above make this scaling prediction rigorous for the logarithmic speed.
\end{remark}

\appendix
\section{ODE analysis and admissible parameters}
\label{app:ode}
In this appendix, we study the ordinary differential equation (ODE) for \(W\in C([0,1])\cap C^1((0,1))\) with parameter \(\lambda > 0\),
\begin{equation}
 W^2W'-\lambda W+u(1-u)=0,
 \qquad 0<u<1,
 \label{eq:W-ode}
\end{equation}
subject to
\begin{equation}
 W(0)=W(1)=0,
 \qquad W>0\quad\text{on }(0,1).
 \label{eq:W-boundary}
\end{equation}

This appendix provides a self-contained proof of the ODE classification needed in Section~\ref{sec:two-sided-barriers} that is a special case of known $p$-Laplacian travelling-wave theory. Setting $Y(u)=W(u)^3$ transforms the equation into
\[
Y'=\frac32(2\lambda Y^{1/3}-2u(1-u)),
\]
which is the case $p=3$, $q=3/2$, $c=2\lambda$, and $f(u)=2u(1-u)$
of the equation studied by Engui\c{c}a, Gavioli, and
Sanchez~\cite[Proposition~2 and Theorem~3.3]{EnguicaGavioliSanchez2013}. Equivalently, it is the phase-plane equation for the $3$-Laplacian Fisher--KPP travelling waves studied by Audrito and Vázquez~\cite[Theorem~2.1 and Section~3]{AudritoVazquez2017}.

In the present normalization, we show that there exists a least admissible parameter $\lambda_*>0$: the
boundary-value problem has a unique positive solution for every
$\lambda\ge\lambda_*$ and no such solution for $0<\lambda<\lambda_*$.  We
then determine the possible endpoint asymptotics of these solutions at
$u=0$. The existence and uniqueness classification is given by
\cite[Proposition~2]{EnguicaGavioliSanchez2013}, and the critical and
supercritical branches are identified in
\cite[Theorem~3.3]{EnguicaGavioliSanchez2013}. Our analysis determines the
precise linear asymptotic in the supercritical branch and the differentiable
remainder estimates needed in Appendix~\ref{app:regularity}.

\subsection{Admissible parameters of the ODE}

For $t\in[0,1]$, let $y(t)=(W(1-t))^3$.  Under this substitution,
\eqref{eq:W-ode} is equivalent to
\begin{equation}
 y'=3t(1-t)-3\lambda y^{1/3}, \qquad 0<t<1.
 \label{eq:y-ODE}
\end{equation}
The boundary conditions \eqref{eq:W-boundary} are equivalent to
\begin{equation}
    \label{eq:y-boundary}
 y(0)=y(1)=0, \qquad y>0 \quad\text{on }(0,1).
\end{equation}
We first study \eqref{eq:y-ODE} with the initial condition $y(0)=0$:
\begin{equation}
 y'=3t(1-t)-3\lambda y^{1/3}, \qquad 0<t<1,\qquad y(0)=0.
 \label{eq:y-shooting}
\end{equation}
We prove existence and uniqueness for \eqref{eq:y-shooting}, and then
determine the parameters $\lambda>0$ for which \eqref{eq:y-boundary} also
holds.

Cube roots throughout this appendix are understood as real cube roots.
Local existence for \eqref{eq:y-shooting} follows from Peano's theorem, since
the right-hand side is continuous.  To obtain existence on all of \([0,1]\) and
uniqueness, we use the following comparison principle; this is needed because
the cube-root term is not locally Lipschitz at zero.

\begin{lemma}
\label{lem:cuberoot-comparison}
Let \(I=[t_0,t_1]\), \(a\in C(I)\) and \(b>0\). Suppose that
\(\underline y,\overline y\in C(I)\cap C^1(\mathring{I})\) satisfy
\[
 \underline y'\le a(t)-b\underline y^{1/3},
 \qquad
 \overline y'\ge a(t)-b\overline y^{1/3}.
\]
If
\(\underline y(t_0)\le\overline y(t_0)\), then
\(\underline y\le\overline y\) on \(I\).
\end{lemma}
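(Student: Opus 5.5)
We argue by contradiction through a first-crossing argument, using only that $x\mapsto b\,x^{1/3}$ is strictly increasing on $\R$. Set $w:=\underline y-\overline y\in C(I)\cap C^1(\mathring I)$. Suppose $w(t_2)>0$ for some $t_2\in I$. Then $t_2>t_0$, since $w(t_0)\le0$. Let
\[
 s:=\sup\{t\in[t_0,t_2]: w(t)\le0\}.
\]
By continuity, $w(s)\le 0$, and in fact $w(s)=0$ when $s>t_0$. Also $s<t_2$, and $w>0$ on $(s,t_2]$.

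On the interval $(s,t_2)\subset\mathring I$ we have $\underline y>\overline y$. Strict monotonicity of the real cube root gives $\underline y^{1/3}>\overline y^{1/3}$, so subtracting the two differential inequalities yields
\[
 w'(t)\le -b\bigl(\underline y(t)^{1/3}-\overline y(t)^{1/3}\bigr)<0,\qquad s<t<t_2.
\]
The term $a(t)$ cancels, so no regularity of $a$ beyond continuity is used.

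Now apply the mean value theorem on $[s,t_2]$. This is legitimate because $w$ is continuous on $[s,t_2]$ and differentiable on its interior, which lies in $\mathring I$; differentiability at the endpoints is not needed. It gives
\[
 w(t_2)-w(s)=w'(\tau)(t_2-s)<0
\]
for some $\tau\in(s,t_2)$. Hence $w(t_2)<w(s)\le0$, contradicting $w(t_2)>0$. Therefore $w\le0$ on $I$, that is, $\underline y\le\overline y$.

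The only point needing care is that the non-Lipschitz behaviour of $y^{1/3}$ at $0$ causes no trouble. The argument never linearizes; it uses only the sign of $\underline y^{1/3}-\overline y^{1/3}$, which is correct for all real values, including negative ones and zero. This is exactly why the comparison holds even though uniqueness for the initial-value problem is not given by Picard--Lindelöf.
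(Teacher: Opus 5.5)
Your proof is correct and rests on the same key observation as the paper's: wherever $\underline y>\overline y$, strict monotonicity of the real cube root forces $(\underline y-\overline y)'\le0$, and $a(t)$ cancels. The difference is only in packaging: you take the last nonpositive point $s$ and apply the mean value theorem on $[s,t_2]$, whereas the paper shows that $\frac12\bigl((\underline y-\overline y)_+\bigr)^2$ is nonincreasing on $\mathring I$ and then extends this to the endpoints by continuity; both handle the lack of differentiability at $t_0$ and $t_1$ equally well.
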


\begin{proof}
Let \(d:=\underline y-\overline y\) and
\(\Psi(s)=\frac12(s_+)^2\).  For every \(t\in\mathring I\), the chain
rule gives
\[
 \frac{\dd}{\dd t}\Psi(d(t))
 \le -b d_+(t)
 \bigl(\underline y(t)^{1/3}-\overline y(t)^{1/3}\bigr)
 \le 0.
\]
Thus \(\Psi(d)\) is non-increasing on every compact subinterval of
$\mathring I$.  By continuity at the endpoints and
\(\Psi(d(t_0))=0\), it follows that \(\Psi(d)=0\) throughout \(I\), and hence
\(\underline y\le\overline y\) on \(I\).
\end{proof}

\begin{lemma}
\label{lem:shooting-properties}
For every \(\lambda>0\), equation \eqref{eq:y-shooting} has a unique solution
\(y_\lambda\) on \([0,1]\), and
\begin{equation}
    \label{eq:y>0}
y_\lambda(t)>0,\qquad 0<t<1.
\end{equation}
  If
\(\lambda_2>\lambda_1\), then
\begin{equation}
 y_{\lambda_2}(t)<y_{\lambda_1}(t),
 \qquad 0<t<1.
 \label{eq:shooting-strict-order}
\end{equation}
Finally, \(\lambda\mapsto y_\lambda\) is continuous from \((0,\infty)\) into \(C([0,1])\).

\end{lemma}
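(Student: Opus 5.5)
The plan is to build everything from the comparison principle of Lemma~\ref{lem:cuberoot-comparison}, applied with $a(t)=3t(1-t)$ and $b=3\lambda$. Uniqueness is immediate: two solutions of \eqref{eq:y-shooting} with the same value $0$ at $t=0$ are each both a sub- and a supersolution relative to the other, so they coincide.

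For existence on all of $[0,1]$, I will start from a local Peano solution and show it cannot blow up. The comparison lemma traps any solution between explicit barriers. The function $\overline y(t)=\tfrac32 t^2$ is a supersolution, provided it stays nonnegative. For a lower barrier, I will either take $\underline y\equiv0$ on a short interval, which is a subsolution there because $0\le 3t(1-t)$ fails only in the wrong direction, or, more usefully, $\underline y(t)=c\,t^{3}$ with $c$ small. Then $3ct^2\le 3t(1-t)-3\lambda c^{1/3}t$ for small $t$ once $c^{1/3}\lambda<1/2$; this holds near $0$, and this barrier will also give positivity near $0$.

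Once the solution is bounded, with $|y'|\le 3+3\lambda\sup|y|^{1/3}$ having at most linear growth in $y$, it extends to $[0,1]$. Alternatively, I can apply the compact-interval Peano theorem directly, since the right-hand side $3t(1-t)-3\lambda y^{1/3}$ grows at most linearly in $y$.

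Positivity on $(0,1)$ is the step I expect to be the main obstacle, because $y\equiv0$ is not excluded by the cube-root nonuniqueness in general. Near $0$, the cubic subsolution gives $y_\lambda(t)\ge ct^3>0$ for small $t$. Suppose instead that $y$ first returns to $0$ at some $t_1\in(0,1)$. Then $y'(t_1)\le0$, while the equation gives $y'(t_1)=3t_1(1-t_1)>0$, a contradiction. Hence $y_\lambda>0$ on $(0,1)$.

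For the strict ordering \eqref{eq:shooting-strict-order}, I will argue as follows. With $\lambda_2>\lambda_1$, $y_{\lambda_2}$ is a subsolution of the $\lambda_1$-equation, because $-3\lambda_2y^{1/3}\le-3\lambda_1y^{1/3}$ when $y\ge0$. The comparison lemma then gives $y_{\lambda_2}\le y_{\lambda_1}$. To make this strict, I set $d=y_{\lambda_1}-y_{\lambda_2}\ge0$ and note that $d'=3\lambda_2y_{\lambda_2}^{1/3}-3\lambda_1y_{\lambda_1}^{1/3}$. At a point $t\in(0,1)$ where $d(t)=0$, this becomes $d'(t)=3(\lambda_2-\lambda_1)y(t)^{1/3}>0$. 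So $d$ is strictly increasing through any zero, and combined with $d\ge0$ this forces $d>0$ on $(0,1)$. The argument is that a zero at $t$ would make $d<0$ just to its left, unless that zero is at $t=0$.

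For continuity of $\lambda\mapsto y_\lambda$ in $C([0,1])$, I will use monotonicity together with equicontinuity. The barriers give a uniform bound $0\le y_\lambda\le\tfrac32$, hence a uniform bound on $y_\lambda'$ for $\lambda$ in a compact set. By Arzelà--Ascoli, every sequence $\lambda_k\to\lambda$ has a uniformly convergent subsequence. Passing to the limit in the integral form of the equation shows that the limit solves \eqref{eq:y-shooting} with parameter $\lambda$, and uniqueness identifies it as $y_\lambda$. Therefore the whole sequence converges.
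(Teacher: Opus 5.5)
Your proof is correct. For existence (compact-interval Peano under linear growth), uniqueness (two-sided comparison) and the strict ordering (sign of $d'$ at an interior zero of $d=y_{\lambda_1}-y_{\lambda_2}$), it coincides with the paper's argument. The differences are in two places.

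For positivity, the paper notes that $\underline y\equiv0$ is a subsolution on all of $[0,1]$. Your remark that it works only on a short interval is muddled, since $0\le 3t(1-t)$ holds everywhere. This gives $y_\lambda\ge0$ globally, so any interior zero $t_0$ is a local minimum with $y_\lambda'(t_0)=0\ne3t_0(1-t_0)$. Your cubic barrier $ct^3$ combined with a first-return argument reaches the same conclusion, with a bit more work.

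For continuity, the paper avoids compactness entirely. It uses the already proved order $y_{\lambda_2}\le y_{\lambda_1}$ and the bound $y_{\lambda_2}\le1/2$ to get $d'\le 3\,2^{-1/3}(\lambda_2-\lambda_1)$. This yields the explicit Lipschitz estimate $\|y_{\lambda_1}-y_{\lambda_2}\|_{C([0,1])}\le 3\,2^{-1/3}|\lambda_1-\lambda_2|$.

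Your Arzel\`a--Ascoli route is also valid: uniform $C^1$ bounds for $\lambda$ in a compact set, passage to the limit in the integral equation, identification of the limit by uniqueness, and the subsequence principle. It is more robust, since it needs only continuous parameter dependence plus uniqueness. Despite your announcement, it never actually uses monotonicity in $\lambda$.

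What it buys is only qualitative continuity, whereas the paper's one-line estimate is quantitative and more elementary. Only continuity is used later, for the closedness of the admissible set and for the continuity of $\lambda\mapsto W_\lambda(\eta)$, so either argument suffices.
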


\begin{proof}
For fixed
$\lambda>0$, the function
\[
  f_\lambda(t,y)=3t(1-t)-3\lambda\sqrt[3]{y}
\]
is continuous on $[0,1]\times\mathbb R$ and satisfies
\[
  |f_\lambda(t,y)|
  \le \frac34+3\lambda+3\lambda|y|.
\]
The global form of Peano's existence theorem on a compact time interval,
under a linear-growth bound, therefore gives a solution of \eqref{eq:y-shooting} on
$[0,1]$. Since the zero function is a subsolution, Lemma~\ref{lem:cuberoot-comparison} yields
$y_\lambda\ge0$. Consequently,
\[
  y_\lambda'(t)\le3t(1-t),\qquad
  0\le y_\lambda(t)\le\frac32t^2-t^3\le\frac12
  \quad(0\le t\le1).
\]
If $y_\lambda(t_0)=0$ for some $t_0\in(0,1)$, then $t_0$ is a local
minimum, so $y_\lambda'(t_0)=0$. This contradicts
$y_\lambda'(t_0)=3t_0(1-t_0)>0$. Hence $y_\lambda>0$ on $(0,1)$.
Applying Lemma~\ref{lem:cuberoot-comparison} in both directions to two solutions with the same
initial condition proves uniqueness.

Let $\lambda_2>\lambda_1>0$. The solution $y_{\lambda_2}$ is a
subsolution of the equation with parameter $\lambda_1$, because it is
nonnegative. Thus Lemma~\ref{lem:cuberoot-comparison} gives
$y_{\lambda_2}\le y_{\lambda_1}$ on $[0,1]$. Set
$d=y_{\lambda_1}-y_{\lambda_2}$. If $d(t_0)=0$ for some
$t_0\in(0,1)$, then $d'(t_0)=0$, whereas the equations imply
\[
  d'(t_0)
  =3(\lambda_2-\lambda_1)y_{\lambda_2}(t_0)^{1/3}>0.
\]
This contradiction proves the strict inequality in \eqref{eq:shooting-strict-order}.

Finally, using the preceding order relation and the bound
$y_{\lambda_2}\le1/2$, we obtain, for $0<t<1$,
\begin{align*}
  d'(t)
  &=-3\lambda_1\bigl(y_{\lambda_1}(t)^{1/3}
                    -y_{\lambda_2}(t)^{1/3}\bigr)
    +3(\lambda_2-\lambda_1)y_{\lambda_2}(t)^{1/3} \\
  &\le 3\,2^{-1/3}(\lambda_2-\lambda_1).
\end{align*}
Since $d(0)=0$, integration and continuity at the endpoints give
\[
  0\le d(t)\le 3\,2^{-1/3}(\lambda_2-\lambda_1)t
  \quad(0\le t\le1).
\]
Interchanging the parameters when necessary shows that
\[
  \|y_{\lambda_1}-y_{\lambda_2}\|_{C([0,1])}
  \le 3\,2^{-1/3}|\lambda_1-\lambda_2|
  \quad(\lambda_1,\lambda_2>0).
\]
In particular, $\lambda\mapsto y_\lambda$ is continuous from
$(0,\infty)$ into $C([0,1])$.
\end{proof}

For later comparison constructions, it is convenient to introduce the
following function.  For a differentiable nonnegative function $w$ on an interval, set
\[
 \mathcal R_\lambda[w](t)=w(t)^2w'(t)+\lambda w(t)-t(1-t).
\]
If \(z=w^3\), then
\[
 z'-\bigl(3t(1-t)-3\lambda z^{1/3}\bigr)=3\mathcal R_\lambda[w].
\]
Thus \(\mathcal R_\lambda[w]\le0\) corresponds to a subsolution of
\eqref{eq:y-shooting}, and \(\mathcal R_\lambda[w]\ge0\) to a supersolution.

Define the admissible set
\[
 \mathcal A:=\{\lambda>0:y_\lambda(1)=0\}.
\]
By Lemma~\ref{lem:shooting-properties}, \(\lambda\in\mathcal A\) if and only if
$W_\lambda(u)=\bigl[y_\lambda(1-u)\bigr]^{1/3}$
solves \eqref{eq:W-ode} and \eqref{eq:W-boundary}; for a fixed \(\lambda\), this positive
solution is unique.

We now prove Proposition~\ref{prop:Cstar-halfline} and obtain the stated explicit lower bound.

\begin{proof}[Proof of Proposition~\ref{prop:Cstar-halfline}]
We first show that \(\mathcal A\) is nonempty. Take \(\overline w(t):=\frac{3}{2\lambda}t(1-t)\).
Then
\[
 \mathcal R_\lambda[\overline w]
 =t(1-t)\left(\frac{1}{2}+\left(\frac{3}{2\lambda}\right)^3t(1-t)(1-2t)\right).
\]
Notice
\[
 \max_{0\le t\le1}\{-t(1-t)(1-2t)\}=\frac1{6\sqrt3},
\]
so if \(\lambda\ge\sqrt3/2\), we have
\[
\frac{1}{2}+\left(\frac{3}{2\lambda}\right)^3t(1-t)(1-2t) \ge 0.
\]
Hence
\(\mathcal R_\lambda[\overline w]\ge0\) on \([0,1]\).  Therefore \(\overline y:=\overline w^3\) is a supersolution of
\eqref{eq:y-shooting}.  Since $\overline y(1)=0$, the comparison
principle (Lemma~\ref{lem:cuberoot-comparison}) yields
\(y_\lambda(1)=0\). This is equivalent to \(\lambda\in\mathcal A\).

By Lemma~\ref{lem:shooting-properties}, $y_\lambda$ is monotone in
$\lambda$.  If $\lambda_1\in\mathcal A$ and $\lambda_2>\lambda_1$, then
\[
 0\le y_{\lambda_2}(1)\le y_{\lambda_1}(1)=0,
\]
so $\lambda_2\in\mathcal A$.  Thus $\mathcal A$ is upward closed.  Let
$\lambda_*:=\inf\mathcal A$.

We next prove that $\lambda_*>0$.  Fix $\lambda\in\mathcal A$.  Dividing
\eqref{eq:W-ode} by $W_\lambda$ for $u\in(0,1)$ gives
\begin{equation}
 (W_\lambda^2(u))'
 =2\lambda-2\frac{u(1-u)}{W_\lambda(u)}
 \le2\lambda.
 \label{eq:ODE divide W}
\end{equation}
Since $W_\lambda(0)=0$, direct integration gives
$W_\lambda(u)^2\le2\lambda u$, that is,
$W_\lambda(u)\le\sqrt{2\lambda u}$ for $0\le u\le1$.

To justify the endpoint integral, integrate the identity in
\eqref{eq:ODE divide W} over $[\rho,1-\rho]$, where $0<\rho<1/2$:
\[
 W_\lambda(1-\rho)^2-W_\lambda(\rho)^2
 =2\lambda(1-2\rho)
  -2\int_\rho^{1-\rho}
      \frac{u(1-u)}{W_\lambda(u)}\,\dd u.
\]
As $\rho\downarrow0$, the left-hand side tends to zero by continuity and
the boundary conditions.  The integral converges by monotone convergence,
and therefore
\begin{equation}
 \lambda=\int_0^1\frac{u(1-u)}{W_\lambda(u)}\,\dd u.
 \label{eq:lambda-integral-W}
\end{equation}
Substituting $W_\lambda(u)\le\sqrt{2\lambda u}$ into
\eqref{eq:lambda-integral-W} gives
\[
 \lambda\ge\frac1{\sqrt{2\lambda}}
 \int_0^1u^{1/2}(1-u)\,\dd u
 =\frac4{15\sqrt{2\lambda}}.
\]
This proves the lower bound for $\lambda_*$, and hence
$\lambda_*>0$.

Finally, Lemma~\ref{lem:shooting-properties} shows that
$\lambda\mapsto y_\lambda$ is continuous on $(0,\infty)$.  Since
$\lambda_*>0$, a sequence in $\mathcal A$ decreasing to $\lambda_*$ gives
$y_{\lambda_*}(1)=0$, so $\lambda_*\in\mathcal A$.  Combining this with $\mathcal{A}$ being upward closed proves $\mathcal A=[\lambda_*,\infty)$ and the upper bound on $\lambda_*$.
\end{proof}

\subsection{Asymptotics of \texorpdfstring{\(W_{\lambda}\)}{W(lambda)} at endpoints}

For $0<\lambda<\lambda_*$, Lemma~\ref{lem:shooting-properties} and
Proposition~\ref{prop:Cstar-halfline} imply that $y_\lambda(1)>0$.
Hence the solution $W_\lambda$ satisfying $W_\lambda(1)=0$ has
$W_\lambda(0)>0$ and does not satisfy both boundary conditions in
\eqref{eq:W-boundary}.  For $\lambda\ge\lambda_*$, the boundary-value
problem has a unique positive solution.  We now determine the endpoint
asymptotics in these two regimes.

\begin{proposition}
\label{prop:subcritical-W}
If \(0<\lambda<\lambda_*\), then
\begin{equation}
 y_\lambda(1)>0,
 \qquad
 W_\lambda(u):=\bigl[y_\lambda(1-u)\bigr]^{1/3}>0
 \qquad\text{for }0\le u<1,
 \label{eq:subcritical-positive-W}
\end{equation}
with \(W_\lambda(1)=0\) and \(W_\lambda(0)>0\).  The function \(W_\lambda\) solves
\eqref{eq:W-ode} on \((0,1)\) and is smooth at \(u=0\).
\end{proposition}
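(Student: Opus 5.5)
We treat Proposition~\ref{prop:subcritical-W} as a short consequence of Lemma~\ref{lem:shooting-properties} and Proposition~\ref{prop:Cstar-halfline}, plus a local analysis near $t=1$, i.e.\ near $u=0$.

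\textbf{Step 1: $y_\lambda(1)>0$.} Proposition~\ref{prop:Cstar-halfline} gives $\mathcal A=[\lambda_*,\infty)$. For $0<\lambda<\lambda_*$ we therefore have $\lambda\notin\mathcal A$, so $y_\lambda(1)\ne0$. Lemma~\ref{lem:shooting-properties} gives $y_\lambda\ge0$ on $[0,1]$, hence $y_\lambda(1)>0$. Combined with \eqref{eq:y>0}, this shows $y_\lambda>0$ on $(0,1]$.

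\textbf{Step 2: properties of $W_\lambda$.} Set $W_\lambda(u)=[y_\lambda(1-u)]^{1/3}$. By Step~1, $W_\lambda>0$ on $[0,1)$, with $W_\lambda(0)=y_\lambda(1)^{1/3}>0$ and $W_\lambda(1)=y_\lambda(0)^{1/3}=0$.

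On $(0,1)$, $y_\lambda$ is positive and $C^1$. The cube root is smooth on $(0,\infty)$, so $W_\lambda$ is $C^1$ there. Differentiating $W_\lambda^3(u)=y_\lambda(1-u)$ and using \eqref{eq:y-ODE} recovers \eqref{eq:W-ode}; this is the equivalence already noted when \eqref{eq:y-ODE} was derived.

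\textbf{Step 3: smoothness at $u=0$.} Near $t=1$ the solution satisfies $y_\lambda\ge c>0$. On a neighborhood of the point $(1,y_\lambda(1))$ with $y>c/2$, the right-hand side $f_\lambda(t,y)=3t(1-t)-3\lambda y^{1/3}$ is real-analytic, and in particular $C^\infty$ in $(t,y)$. Since $f_\lambda$ is polynomial in $t$, it extends smoothly to $t$ slightly beyond $1$. By Picard--Lindel\"of there is a unique $C^1$ solution through $(1,y_\lambda(1))$ on an interval $(1-\eta,1+\eta)$, and it coincides with $y_\lambda$ on $(1-\eta,1]$. Bootstrapping via $y'=f_\lambda(t,y)$ shows this solution is $C^\infty$.

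Hence $y_\lambda$ extends smoothly across $t=1$, and $W_\lambda=y_\lambda(1-\cdot)^{1/3}$ is smooth at $u=0$ because $y_\lambda(1)>0$ keeps the cube root away from its singularity. Derivatives at $u=0$ can be taken either one-sided or from the extension.

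\textbf{Expected obstacle.} The only point needing care is Step~3: specifying in what sense ``smooth at $u=0$'' holds for an endpoint solution. The route above handles it by extending the ODE past $t=1$, which is legitimate precisely because $y_\lambda$ stays bounded away from $0$ there, so the non-Lipschitz cube-root issue does not arise. The other steps are direct consequences of the earlier results.
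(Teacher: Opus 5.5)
Your proof is correct and follows the paper's argument: Steps 1--2 are exactly the paper's ``direct corollary'' of Lemma~\ref{lem:shooting-properties} and Proposition~\ref{prop:Cstar-halfline}, and Step 3 uses the same observation that $y_\lambda(1)=W_\lambda(0)^3>0$ keeps the nonlinearity smooth, followed by a bootstrap. The difference is technical: you extend $y_\lambda$ across $t=1$ via Picard--Lindel\"of plus local uniqueness (which needs the routine integral-form argument, since $y_\lambda$ is only known to take the value $y_\lambda(1)$ as a one-sided limit), whereas the paper stays one-sided, using $W_\lambda'=F(u,W_\lambda)$ with $F(u,w)=(\lambda w-u(1-u))/w^2$ on $[0,\eta]$ and inducting on $C^k$ regularity.
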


\begin{proof}
Equation \eqref{eq:subcritical-positive-W} is a direct corollary of Lemma~\ref{lem:shooting-properties} and Proposition~\ref{prop:Cstar-halfline}.

It remains to prove that $W_{\lambda}$ is smooth at $u=0$.  For
$u\in[0,1)$, we have $W_\lambda(u)>0$.  Dividing both sides of
\eqref{eq:W-ode} by $W_\lambda(u)^2$ gives
\[
 W_\lambda'(u)
 =\frac{\lambda W_\lambda(u)-u(1-u)}{W_\lambda(u)^2}
 =:F(u,W_\lambda(u)).
\]
Since $W_\lambda$ is continuous and $W_\lambda(0)>0$, there are $c>0$ and
$\eta>0$ such that $W_\lambda(u)\ge c$ for $0\le u\le\eta$.  The function
\[
 F(u,w)=\frac{\lambda w-u(1-u)}{w^2}
\]
is smooth on a neighborhood of
$\{(u,W_\lambda(u)):u\in[0,\eta]\}$.  In particular,
\[
 W_\lambda'(u)=F(u,W_\lambda(u))
 \longrightarrow F(0,W_\lambda(0))
 =\frac{\lambda}{W_\lambda(0)}
 \qquad\text{as }u\downarrow0.
\]
The derivative is therefore bounded near zero.  Integrating first over
$[\delta,u]$ and then letting $\delta\downarrow0$ yields
\[
 W_\lambda(u)-W_\lambda(0)
 =\int_0^u F(s,W_\lambda(s))\,\dd s.
\]
Thus $W_\lambda$ extends to a $C^1$ function at zero, with
$W_\lambda'(0)=\lambda/W_\lambda(0)$.  Finally, if $W_\lambda$ is $C^k$ on
$[0,\eta]$, then the smoothness of $F$ shows that
$W_\lambda'=F(\cdot\,,W_\lambda)$ is $C^k$, and hence $W_\lambda$ is
$C^{k+1}$.  Induction proves that $W_\lambda$ is smooth at $u=0$.
\end{proof}

\begin{lemma}
\label{lem:linear-u1}
For every \(\lambda>0\),
\[
 W_\lambda(u)\sim\frac{1-u}{\lambda},
 \qquad u\uparrow1.
\]
\end{lemma}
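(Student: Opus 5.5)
We work in the shooting variable $t=1-u$, so the claim is $y_\lambda(t)^{1/3}\sim t/\lambda$ as $t\downarrow0$. Here $y_\lambda$ is the solution of \eqref{eq:y-shooting} from Lemma~\ref{lem:shooting-properties}, and $W_\lambda(u)=y_\lambda(1-u)^{1/3}$ in every regime of $\lambda$: for $\lambda\ge\lambda_*$ by definition of $\mathcal A$, and for $\lambda<\lambda_*$ by Proposition~\ref{prop:subcritical-W}. The statement is therefore purely local at $t=0$ for the initial-value problem $y'=3t(1-t)-3\lambda y^{1/3}$, $y(0)=0$. The heuristic is that near $0$ the balance $3t\approx3\lambda y^{1/3}$ forces $y^{1/3}\approx t/\lambda$. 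We make this rigorous by trapping $y_\lambda$ between explicit sub- and supersolutions using the comparison principle Lemma~\ref{lem:cuberoot-comparison}, or equivalently the operator $\mathcal R_\lambda$.

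Fix $\theta\in(0,1)$ and consider $w_\pm(t)=(1\pm\theta)t/\lambda$ on a short interval $[0,t_\theta]$. A direct computation gives
\[
\mathcal R_\lambda[w_\pm](t)=\frac{(1\pm\theta)^3t^2}{\lambda^3}\pm\theta t+t^2 .
\]
For $w_+$ this is $\ge0$, so $w_+^3$ is a supersolution on all of $[0,1]$. Since $w_+(0)^3=0=y_\lambda(0)$, Lemma~\ref{lem:cuberoot-comparison} gives $y_\lambda\le w_+^3$, that is, $W_\lambda(1-t)\le(1+\theta)t/\lambda$. For $w_-$ the quantity is $-\theta t+O(t^2)$, which is $\le0$ for $t\le t_\theta$ with $t_\theta$ small depending on $\theta,\lambda$. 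Hence $w_-^3$ is a subsolution on $[0,t_\theta]$ starting at $0$, and comparison yields $y_\lambda\ge w_-^3$ there.

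Combining the two bounds gives $(1-\theta)t/\lambda\le W_\lambda(1-t)\le(1+\theta)t/\lambda$ for $0<t\le t_\theta$. Letting $\theta\downarrow0$ proves $W_\lambda(u)\sim(1-u)/\lambda$.

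The one point needing care is that Lemma~\ref{lem:cuberoot-comparison} requires $C^1$ interior regularity and the differential inequalities on the open interval, with equal initial values allowed. Both $w_\pm^3$ are polynomials, so this is routine. The main obstacle is therefore only the sign bookkeeping in $\mathcal R_\lambda[w_-]$ near $0$, where the $-\theta t$ term must dominate the $O(t^2)$ terms. This holds once $t_\theta<\theta\bigl(1+(1-\theta)^3\lambda^{-3}\bigr)^{-1}$.
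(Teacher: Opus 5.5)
Your proposal is correct and follows essentially the same argument as the paper. The paper also traps $y_\lambda$ between $(k_-t)^3$ and $(k_+t)^3$ with $k_-<1/\lambda<k_+$, using $\mathcal R_\lambda[kt]=t(\lambda k-1)+t^2(k^3+1)$ and Lemma~\ref{lem:cuberoot-comparison}, and then lets $k_\pm\to1/\lambda$; your choice $k_\pm=(1\pm\theta)/\lambda$ and your remark that the upper barrier works on all of $[0,1]$ are only cosmetic refinements.
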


\begin{proof}
Set
\[
 w_\lambda(t):=W_\lambda(1-t)=y_\lambda(t)^{1/3},
 \qquad 0\le t<1.
\]
Thus it suffices to prove that \(w_\lambda(t)/t\to1/\lambda\) as
\(t\downarrow0\).  Fix constants \(0<k_-<\frac1\lambda<k_+\),
and define \(\underline w(t)=k_-t\), \(\overline w(t)=k_+t\).  By direct computation, for any \(k>0\),
\[
 \mathcal R_\lambda[kt]
 =t(\lambda k-1)+t^2(k^3+1).
\]
Since $\lambda k_--1<0$ and $\lambda k_+-1>0$, there exists a
sufficiently small $\delta>0$ such that
$\underline y:=\underline w^3$ and
$\overline y:=\overline w^3$ are a subsolution and a supersolution,
respectively, of \eqref{eq:y-shooting} on $[0,\delta]$.  All three functions
have the same initial value \(0\), so Lemma~\ref{lem:cuberoot-comparison} gives
\[
 (k_-t)^3\le y_\lambda(t)\le(k_+t)^3,
 \qquad 0\le t\le\delta.
\]
Then it follows that
\[
 k_-\le
 \liminf_{t\downarrow0}\frac{w_\lambda(t)}t
 \le
 \limsup_{t\downarrow0}\frac{w_\lambda(t)}t
 \le k_+.
\]
Letting
\(k_-\uparrow1/\lambda\) and \(k_+\downarrow1/\lambda\) yields
\(w_\lambda(t)/t\to1/\lambda\) as
$t\downarrow0$.  Setting $u=1-t$ gives the desired result.
\end{proof}

We next classify the asymptotics at \(u=0\) for \(\lambda\in\mathcal A\).

\begin{lemma}
\label{lem:u0-dichotomy}
If \(\lambda\in\mathcal A\), exactly one of the following asymptotics holds:
\begin{align}
 W_\lambda(u)&\sim\frac{u}{\lambda},
 &&u\downarrow0,
 \label{eq:u0-linear}\\
 W_\lambda(u)&\sim\sqrt{2\lambda u},
 &&u\downarrow0.
 \label{eq:u0-sharp}
\end{align}
\end{lemma}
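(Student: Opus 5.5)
We study the behaviour near $u=0$ through the function $V:=W_\lambda^2$. On $(0,1)$, identity \eqref{eq:ODE divide W} reads
\[
V'(u)=2\lambda-\frac{2u(1-u)}{W_\lambda(u)},
\]
so $V$ is continuous on $[0,1]$ with $V(0)=0$ and $V'\le2\lambda$. The key quantity is $\ell:=\lim_{u\downarrow0}V(u)/u$, which we hope to show exists and equals $2\lambda$ or $0$. These two values correspond to \eqref{eq:u0-sharp} and to the linear branch, respectively.

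\textbf{Existence of the limit.} The first step is to show that $u\mapsto u/W_\lambda(u)$ has a limit $L\in[0,\infty]$ as $u\downarrow0$. To get this, write the equation as a first-order ODE for $\phi(u):=W_\lambda(u)/u$. From $W^2W'=\lambda W-u(1-u)$ we obtain
\[
u\phi'=\frac{\lambda\phi-(1-u)}{u\phi^2}-\phi.
\]
We then run a phase-line argument near $u=0$. The sign of $u\phi'$ is governed by $g_u(\phi):=\lambda\phi-(1-u)-u\phi^3$. For small $u$, $g_u$ has exactly two positive roots: one near $\phi=1/\lambda$, and one near $\phi\approx(\lambda/u)^{1/2}$, that is, $W\approx\sqrt{\lambda u}$. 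Between these roots $\phi$ increases, and outside them it decreases. Lemma~\ref{lem:cuberoot-comparison}, applied with the barriers $kt$ and $\sqrt{cu}$ in the $y$-variables, prevents $\phi$ from oscillating indefinitely between the regions. The trajectory therefore eventually stays in one region, and $\phi$ becomes monotone near $0$. Since $W_\lambda\le\sqrt{2\lambda u}$ by the bound proved in Proposition~\ref{prop:Cstar-halfline}, the limit $L=\lim u/W_\lambda\in[0,\infty]$ exists.

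\textbf{Case analysis.} If $L<\infty$, then $u(1-u)/W_\lambda\to0$, so $V'\to2\lambda$. Integrating from $0$ gives $V(u)\sim2\lambda u$, which is \eqref{eq:u0-sharp}. (This is in fact automatic: $W\le\sqrt{2\lambda u}$ forces $L=\infty$ in the linear regime, so finiteness of $L$ can only occur on this branch.) If $L=\infty$, then $W_\lambda=o(u)$ would make $W^2W'=o(u)$ while $\lambda W-u(1-u)\sim-u$, which is impossible. So $W_\lambda/u$ is bounded below along the trajectory. Combining this with the phase-line picture, $\phi$ must converge to the finite stable root, namely $\phi\to1/\lambda$, which gives \eqref{eq:u0-linear}. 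To make the last step cleaner, we would mimic Lemma~\ref{lem:linear-u1}: we use sub- and supersolution barriers $k_\pm u$, with $k_-<1/\lambda<k_+$, near $u=0$ for the reversed-direction problem. Once $\phi$ enters a neighbourhood of $1/\lambda$ it cannot leave, because $\mathcal R$-type computations give $\mathcal R_\lambda[ku]$ of sign $\lambda k-1$ for small $u$.

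\textbf{Exclusivity and the main obstacle.} The two alternatives are mutually exclusive, since $u/\lambda\not\sim\sqrt{2\lambda u}$. The main obstacle is the first step: ruling out oscillation of $\phi$ and excluding intermediate power behaviour $W\asymp u^\alpha$ with $1/2<\alpha<1$. We plan to handle it with the sign structure of $g_u$. In the intermediate zone $1/\lambda\ll\phi\ll u^{-1/2}$ we have $u\phi'\approx\lambda/(u\phi)>0$ in magnitude, so as $u$ decreases $\phi$ is pushed rapidly out of this zone, either down toward $1/\lambda$ or up toward the $\sqrt{u}$ branch. This makes the intermediate zone transient and yields the dichotomy.
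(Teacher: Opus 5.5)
There is a genuine gap, and the case analysis also contains an outright error. With $L=\lim_{u\downarrow0}u/W_\lambda(u)$, the linear branch $W_\lambda\sim u/\lambda$ is exactly the case $L=\lambda\in(0,\infty)$. There $u(1-u)/W_\lambda\to\lambda$, so $V'\to0$, not $2\lambda$. Hence ``$L<\infty\Rightarrow u(1-u)/W_\lambda\to0$'' holds only for $L=0$. Your parenthetical claim that $W_\lambda\le\sqrt{2\lambda u}$ forces $L=\infty$ on the linear branch is also false: the bound $u/\lambda\le\sqrt{2\lambda u}$ for small $u$ imposes no constraint. You then derive the linear branch inside the case $L=\infty$, which you have just excluded, so the argument never actually reaches \eqref{eq:u0-linear}. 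Once $L$ is known to exist, the correct trichotomy is as follows. First, $L=\infty$ is impossible; the clean reason is that $W_\lambda^2W_\lambda'=\lambda W_\lambda-u(1-u)<0$ near $0$ would make $W_\lambda$ decrease from $W_\lambda(0)=0$ (your pointwise claim ``$W^2W'=o(u)$'' is not justified). Second, $L=0$ gives $V'\to2\lambda$ and hence \eqref{eq:u0-sharp}. Third, $L\in(0,\infty)$ gives $V(u)/u\to2\lambda-2L$, while $V=W_\lambda^2\sim u^2/L^2=o(u)$; this forces $L=\lambda$, which is \eqref{eq:u0-linear}.

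The more serious problem is that the existence of $L$ is only asserted, and that is the real content of the lemma. The bound $W_\lambda\le\sqrt{2\lambda u}$ says nothing about whether $u/W_\lambda$ converges, and the claim that $\phi$ becomes monotone near $0$ is unproved. Your intermediate-zone heuristic also runs the wrong way. There $\phi'>0$, so as $u\downarrow0$ the trajectory can only move down toward $1/\lambda$, never up toward the $\sqrt u$ branch.

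The barriers you mention do close the gap if you use them directly. Lemma~\ref{lem:cuberoot-comparison} propagates inequalities toward $u=0$. With $u=1-t$ one has $\mathcal R_\lambda[k(1-t)]=u(\lambda k-1)+u^2(1-k^3)$, which has the sign of $\lambda k-1$ for small $u$. So for small $u_0$, $W_\lambda(u_0)\le ku_0$ with $k>1/\lambda$ propagates to $W_\lambda\le ku$ on $(0,u_0]$. Likewise $W_\lambda(u_0)\ge ku_0$ with $k<1/\lambda$ propagates to $W_\lambda\ge ku$ on $(0,u_0]$. Two integrability facts complete the squeeze:
\begin{itemize}
\item If $\phi<k<1/\lambda$ on a whole interval $(0,u_1]$, then $W_\lambda'<0$ near $0$, which is impossible.
\item If $k<\phi\le k_0$ on $(0,u_1]$ with $k>1/\lambda$, then $W_\lambda'\ge(\lambda k-1)/(k_0^2u)$, which is not integrable at $0$.
\end{itemize}
Therefore $\liminf_{u\downarrow0}\phi<\infty$ forces $\phi\to1/\lambda$. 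Otherwise $u/W_\lambda\to0$, so $V'\to2\lambda$ and you land on the sharp branch.

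This barrier argument would be a legitimate alternative to the paper's route. The paper first shows $W_\lambda$ is increasing near $0$ from the zero structure of $H=\lambda W-u(1-u)$, inverts it, and runs a phase-line argument for $r=U(x)/x$ in the variable $s=-\log x$. As submitted, however, neither your non-oscillation step nor your case split is correct.
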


\begin{proof}
Write \(W=W_\lambda\) and
\[
 H(u)=\lambda W(u)-u(1-u)=W(u)^2W'(u).
\]
Since \(W(0)=0<W(u)\), the mean-value theorem shows that \(H>0\) at least on a sequence of points
tending to zero.  At any zero \(v\in(0,1/2)\) of \(H\),
\(W'(v)=0\) and
\[
 H'(v)=-(1-2v)<0.
\]
Thus $H$ has at most one zero in $(0,1/2)$.  Indeed, after a zero,
$H$ is negative
locally, whereas at the first return to zero its derivative would have to be
nonnegative.  Hence \(H>0\) in a sufficiently small interval \((0, u_0]\), and \(W\) is strictly increasing on \([0, u_0]\).

Consequently, $W$ has a continuous inverse
$U:[0,x_0]\to[0,u_0]$, where $x_0=W(u_0)$.  Moreover,
\[
 U\in C([0,x_0])\cap C^1((0,x_0]),
 \qquad U(0)=0,
 \qquad U(x)>0\quad(0<x\le x_0).
\]
Let \(r(x):=U(x)/x\).  The original equation \eqref{eq:W-ode} gives, for \(x\in (0, x_0)\),
\begin{equation}
\begin{aligned}
 U'(x)
 &=\frac{x^2}{\lambda x-U(x)+U(x)^2}
 =\frac{x}{\lambda-r(x)+xr(x)^2},\\
 xr'(x)
 &=\frac{x}{\lambda-r(x)+xr(x)^2}-r(x).
\end{aligned}
 \label{eq:r-inverse}
\end{equation}
Since \(U'(x)>0\), the denominator \(\lambda-r(x)+xr(x)^2\) is positive.  If \(r(x_j)\ge2\lambda\) for some
\(x_j\downarrow0\), then
\[
 x_jr(x_j)^2>r(x_j)-\lambda\ge\frac12r(x_j),
 \qquad U(x_j)=x_jr(x_j)>\frac12,
\]
contrary to \(U(x)\to0\).  Thus \(r\) is bounded near zero.  Since
\(r<\lambda+xr^2\), we also have
\begin{equation}
 \limsup_{x\downarrow0}r(x)\le \lambda.
 \label{eq:r-limsup}
\end{equation}

Let \(\rho(s):=r(e^{-s})\), \(s\in[-\log x_0,\infty)\).  Equation \eqref{eq:r-inverse} becomes
\begin{equation}
 \rho'(s)=\rho(s)-
 \frac{e^{-s}}{\lambda-\rho(s)+e^{-s}\rho(s)^2}.
 \label{eq:r-flow}
\end{equation}
We show that either \(\lim_{s\to\infty}\rho(s)=0\) or
\(\lim_{s\to\infty}\rho(s)=\lambda\).

We use the following elementary observation.  Suppose that there exists a sufficiently large \(S\)
such that \(\phi'(s)>0\) whenever \(s\ge S\) and \(\phi(s)=a\).  Then, for every
\(s_0\ge S\) with \(\phi(s_0)\ge a\), one has
\[
 \phi(s)\ge a\qquad\text{for all }s\ge s_0.
\]
Indeed, otherwise at the first time when \(\phi\) falls below \(a\), its
derivative at the boundary level \(a\) would be non-positive.

Suppose first that \(\liminf\rho=0\).  If \(\limsup\rho>0\), choose
\(0<\eta<\min\{\lambda/2,\limsup\rho\}\).  At \(\rho(s)=\eta\), for all large \(s\),
\[
 \rho'(s)\ge\eta-\frac{2e^{-s}}\lambda\ge\frac\eta2>0.
\]
Since \(\limsup\rho>\eta\), the observation gives \(\rho\ge\eta\) eventually,
contrary to \(\liminf\rho=0\).  Hence \(\rho(s)\to0\).

Now suppose \(L:=\liminf\rho>0\).  Fix \(\delta\in(0,\lambda)\) and choose
\(\eta\in(0,L/2)\).  There exists \(S\) such that, for every \(s\ge S\),
\[
 \rho(s)\ge\eta,
 \qquad
 \frac{e^{-s}}\delta\le\frac\eta2.
\]
Hence, whenever \(s\ge S\) and \(\rho(s)\le\lambda-\delta\),
\eqref{eq:r-flow} gives
\[
 \rho'(s)
 \ge\eta-\frac{e^{-s}}\delta
 \ge\frac\eta2.
\]
This lower bound is uniform for $s\ge S$ and
$\rho(s)\le\lambda-\delta$.  Hence $\rho$ reaches
$\lambda-\delta$ in finite time.  The preceding observation then gives
\(\rho(s)\ge\lambda-\delta\) for all sufficiently large \(s\).  Therefore
\[
 \liminf_{s\to\infty}\rho(s)\ge\lambda-\delta.
\]
Since \(\delta>0\) is arbitrary, together with \eqref{eq:r-limsup} we have
\(\rho(s)\to\lambda\).

Therefore, we have either \(r(x)\to \lambda\) or \(r(x)\to0\), as \(x\to 0\).  The first case says
\(u/W(u)\to \lambda\) and gives \eqref{eq:u0-linear}.  In the second case, we have
\(U'(x)\sim x/\lambda\), which implies
\(U(x)\sim x^2/(2\lambda)\), and this gives
\eqref{eq:u0-sharp}.
\end{proof}

Using Lemma~\ref{lem:u0-dichotomy}, we now determine the asymptotics of
$W_\lambda(u)$ at $u=0$ for every $\lambda\in\mathcal A$.
The critical asymptotic agrees with
\cite[Theorem~3.3(b)]{EnguicaGavioliSanchez2013}.
For $\lambda>\lambda_*$, \cite[Theorem~3.3(a)]{EnguicaGavioliSanchez2013}
gives $W_\lambda(u)=o(\sqrt u)$ as $u\downarrow0$, whereas we prove the
sharper asymptotic $W_\lambda(u)\sim u/\lambda$.
See also the related travelling-wave classification and phase-plane analysis
in~\cite{AudritoVazquez2017}.

\begin{proposition}
\label{prop:critical-branches}
At \(\lambda = \lambda_*\), we have
\begin{equation}
 W_{\lambda_*}(u)\sim\sqrt{2\lambda_*u},
 \qquad u\downarrow0.
 \label{eq:critical-sharp-branch}
\end{equation}
For every \(\lambda>\lambda_*\), we have
\begin{equation}
 W_\lambda(u)\sim\frac u\lambda,
 \qquad u\downarrow0.
 \label{eq:supercritical-linear-branch}
\end{equation}
\end{proposition}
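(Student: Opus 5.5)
By Lemma~\ref{lem:u0-dichotomy}, each $\lambda\in\mathcal A=[\lambda_*,\infty)$ falls into exactly one of the two branches \eqref{eq:u0-linear} and \eqref{eq:u0-sharp}. So it suffices to prove two things: the sharp branch \eqref{eq:u0-sharp} can occur only at $\lambda=\lambda_*$, and the linear branch \eqref{eq:u0-linear} cannot occur at $\lambda_*$. Both arguments use the strict ordering \eqref{eq:shooting-strict-order} and the comparison principle (Lemma~\ref{lem:cuberoot-comparison}).

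\emph{Supercritical branch.} Let $\lambda>\lambda_*$ and suppose, for contradiction, that $W_\lambda(u)\sim\sqrt{2\lambda u}$. Pick $\lambda'\in(\lambda_*,\lambda)$; then $\lambda'\in\mathcal A$. By \eqref{eq:shooting-strict-order}, $y_{\lambda'}>y_\lambda$ on $(0,1)$, and hence $W_{\lambda'}(u)\ge W_\lambda(u)$ for $0<u<1$.
\begin{itemize}
\item If $W_{\lambda'}$ were on the linear branch, then $W_{\lambda'}(u)\sim u/\lambda'=o(\sqrt u)$. This contradicts $W_{\lambda'}\ge W_\lambda\sim\sqrt{2\lambda u}$.
\item Hence $W_{\lambda'}(u)\sim\sqrt{2\lambda' u}$. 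Dividing $W_{\lambda'}\ge W_\lambda$ by $\sqrt u$ and letting $u\downarrow0$ gives $\sqrt{2\lambda'}\ge\sqrt{2\lambda}$, contradicting $\lambda'<\lambda$.
\end{itemize}
Therefore \eqref{eq:supercritical-linear-branch} holds for every $\lambda>\lambda_*$.

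\emph{Critical branch.} This is the main step. Suppose, for contradiction, that $W_{\lambda_*}(u)\sim u/\lambda_*$. We will show that the linear branch is stable under a small decrease of $\lambda$, which contradicts the minimality of $\lambda_*$. Work in the variable $t=1-u$.

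First choose the barrier slope. Pick $k$ with $k\lambda_*>1$ and $k$ strictly larger than $\limsup_{u\downarrow0}W_{\lambda_*}(u)/u=1/\lambda_*$. Then choose $\lambda<\lambda_*$ close enough that still $\lambda k>1$.

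Next verify the barrier inequality. For $\overline w(t)=k(1-t)$, a direct computation (with $u=1-t$) gives
\[
 \mathcal R_\lambda[\overline w](t)=-k^3u^2+\lambda ku-u(1-u)=u\bigl(\lambda k-1+u(1-k^3)\bigr).
\]
This is nonnegative for $0\le u\le u_0$ once $u_0$ is small. So $\overline y=\overline w^3$ is a supersolution of \eqref{eq:y-ODE} on $[1-u_0,1]$, and $u_0$ can be taken independent of $\lambda$ near $\lambda_*$.

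Then fix the initial ordering at $t=1-u_0$. Shrinking $u_0$ if necessary, the assumed asymptotics give $y_{\lambda_*}(1-u_0)=W_{\lambda_*}(u_0)^3<(ku_0)^3$ strictly. By the continuity of $\lambda\mapsto y_\lambda$ in Lemma~\ref{lem:shooting-properties}, the strict inequality $y_\lambda(1-u_0)<(ku_0)^3$ persists for $\lambda<\lambda_*$ sufficiently close to $\lambda_*$.

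Finally apply comparison. Lemma~\ref{lem:cuberoot-comparison} on $[1-u_0,1]$ yields $0\le y_\lambda(1)\le\overline y(1)=0$. Thus $\lambda\in\mathcal A$ with $\lambda<\lambda_*$, contradicting $\mathcal A=[\lambda_*,\infty)$. Hence $W_{\lambda_*}$ is on the sharp branch, which is \eqref{eq:critical-sharp-branch}.

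The delicate points are keeping the directions of comparison straight, since comparison runs forward in $t$ and hence toward $u=0$, and the sign of $\mathcal R_\lambda$ under the change of variables. Both must be checked carefully in the write-up.
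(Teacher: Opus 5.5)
Your proposal is correct and follows the paper's proof. The critical case is the paper's argument: the barrier $w(t)=k(1-t)$, continuity of $\lambda\mapsto y_\lambda$ at $t=1-u_0$, and comparison on $[1-u_0,1]$ together produce an admissible $\lambda<\lambda_*$. For $\lambda>\lambda_*$ the paper compares directly with $W_{\lambda_*}$, using the sharp branch already established at $\lambda_*$; you instead compare with any $\lambda'\in(\lambda_*,\lambda)$ and treat both branches of the dichotomy, which makes that half independent of the critical case but is otherwise the same monotonicity argument.
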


\begin{proof}
We first show that an admissible parameter $\lambda\in\mathcal A$ with
linear asymptotics cannot be the left endpoint $\lambda_*\in\mathcal A$.
Suppose $\lambda_0\in\mathcal A$ and
\(W_{\lambda_0}(u)\sim u/\lambda_0\) as \(u\downarrow 0\).
Choose some \(k>1/\lambda_0\). Then
\(\lambda_0k-1>0\). Choose a small \(\eta>0\) such that
\[
 W_{\lambda_0}(\eta)<k\eta,
 \qquad
 \eta|1-k^3|\le\frac{\lambda_0k-1}{4}.
\]
Fix this \(k\) and \(\eta\). By Lemma~\ref{lem:shooting-properties}, the map
\(\lambda\mapsto W_\lambda(\eta)\) is continuous, so we can choose \(\lambda'<\lambda_0\) sufficiently close
to \(\lambda_0\) such that
\[
 \lambda'k-1\ge\frac{\lambda_0k-1}{2},
 \qquad
 W_{\lambda'}(\eta)<k\eta.
\]
Now consider $w(t)=k(1-t)$ on $t\in[1-\eta,1]$.  Writing
$u=1-t$, we have
\[
 \mathcal R_{\lambda'}[w]
 =u(\lambda'k-1)+u^2(1-k^3)
 \ge\frac{\lambda_0k-1}{4}u\ge0.
\]
At \(t=1-\eta\), since \(W_{\lambda'}(\eta)<k\eta\), the comparison principle gives
\[
 0\le y_{\lambda'}(t)\le[k(1-t)]^3,
 \qquad 1-\eta\le t\le1.
\]
In particular, \(y_{\lambda'}(1)=0\), so \(\lambda'\in\mathcal A\).  We have
therefore found an admissible parameter strictly smaller than \(\lambda_0\).
Thus $\lambda_*$ cannot have linear asymptotics.  Otherwise, one could
find $\lambda'<\lambda_*$ with $\lambda'\in\mathcal A$, contradicting
the definition of $\lambda_*$.  The dichotomy in
Lemma~\ref{lem:u0-dichotomy} then gives
\eqref{eq:critical-sharp-branch}.

We then prove \eqref{eq:supercritical-linear-branch}.
If \(\lambda>\lambda_*\), Lemma~\ref{lem:shooting-properties} gives
$W_\lambda(u)<W_{\lambda_*}(u)$ for $0<u<1$.  If
$W_\lambda(u)\sim\sqrt{2\lambda u}$, then
\[
 \frac{W_\lambda(u)}{W_{\lambda_*}(u)}\longrightarrow
 \sqrt{\frac \lambda{\lambda_*}}>1
\]
as \(u\downarrow0\), a contradiction.  Thus
\eqref{eq:supercritical-linear-branch} holds.
\end{proof}

\section{Proofs of Propositions~\ref{prop:full-line-distribution} and \ref{prop:hard-edge-distribution} }
\label{app:regularity}

This appendix has three components: Lemma~\ref{lem:finite-asymptotic-ode}
provides an abstract finite asymptotic expansion,
Lemma~\ref{lem:linear-branch-regularity} transfers it to the linear endpoint
branches of $W_\lambda$, and the final two proofs convert those endpoint
estimates into the two CDFs required in
Propositions~\ref{prop:full-line-distribution}
and~\ref{prop:hard-edge-distribution}.

\begin{lemma}
\label{lem:finite-asymptotic-ode}
Let $I\subset\mathbb R$ be an open interval containing $y_*$, and let
$F,G\in C^9(I)$ satisfy
\[
 F(y_*)=0,
 \qquad
 \mu:=F'(y_*)\ne0.
\]
Suppose that, for some $T_0>0$, a function $Y\in C^1([T_0,\infty);I)$
satisfies
\begin{equation}
 Y'(t)=F(Y(t))+\frac1tG(Y(t)),
 \qquad
 Y(t)\longrightarrow y_*
 \quad\text{as }t\to\infty.
 \label{eq:asymptotically-autonomous-ode}
\end{equation}
Then there exist coefficients $b_1,\ldots,b_4$, with
\[
 b_1=-\frac{G(y_*)}{F'(y_*)},
\]
such that, for $0\le \ell\le4$,
\begin{equation}
 \frac{\dd^\ell}{\dd t^\ell}
 \left(Y(t)-y_*-\sum_{j=1}^4b_jt^{-j}\right)
 =O(t^{-5-\ell})
 \quad\text{as }t\to\infty.
 \label{eq:finite-asymptotic-expansion}
\end{equation}
\end{lemma}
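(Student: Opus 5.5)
Set $Z(t):=Y(t)-y_*$.  Taylor expansion at $y_*$ rewrites the equation as
\[
 Z'=\mu Z+\frac{G(y_*)}{t}+N(t,Z),
 \qquad
 |N(t,Z)|\le C\Bigl(Z^2+\frac{|Z|}{t}\Bigr),
\]
valid as long as $Z$ stays small, which holds for large $t$ because $Y\to y_*$.

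\textbf{Step 1: the formal expansion.}  I would first determine $b_1,\dots,b_4$ by inserting the ansatz $P(t)=\sum_{j=1}^4 b_jt^{-j}$ into the equation and expanding $F(y_*+P)$ and $G(y_*+P)$ in powers of $t^{-1}$.  Matching coefficients of $t^{-k}$ gives a triangular system:
\[
 -(k-1)b_{k-1}=\mu b_k+\bigl(\text{polynomial in }b_1,\dots,b_{k-1}\text{ and derivatives of }F,G\text{ at }y_*\bigr).
\]
Since $\mu\ne0$, this is uniquely solvable.  At $k=1$ it reads $0=\mu b_1+G(y_*)$, which is the stated formula for $b_1$.  With this choice the residual
\[
 \mathcal E(t):=P'-F(y_*+P)-t^{-1}G(y_*+P)
\]
satisfies $\mathcal E^{(\ell)}(t)=O(t^{-5-\ell})$, because $F,G\in C^9$ leaves room for a Taylor remainder to order $5$ together with its derivatives.

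\textbf{Step 2: the remainder without derivatives.}  Put $E:=Y-y_*-P$, so $E\to0$.  Then
\[
 E'=\bigl(\mu+a(t)\bigr)E-\mathcal E(t),
 \qquad a(t)=O\bigl(|E|+|P|+t^{-1}\bigr)\to0.
\]
I would split by the sign of $\mu$.
\begin{itemize}
\item If $\mu<0$, variation of constants from $T_0$ gives $|E(t)|\le Ce^{\mu t/2}+C\int_{T_0}^t e^{\mu(t-s)/2}s^{-5}\,\dd s=O(t^{-5})$.
\item If $\mu>0$, the condition $E\to0$ forces the integral to be taken from infinity, $E(t)=\int_t^\infty e^{-\int_t^s(\mu+a)}\mathcal E(s)\,\dd s$, since any other solution grows exponentially.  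This again gives $O(t^{-5})$.
\end{itemize}

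\textbf{Step 3: the derivatives.}  Derivative bounds follow by bootstrapping through the equation.  Writing $E'=\mu E+\bigl[F(y_*+P+E)-F(y_*+P)-\mu E\bigr]+t^{-1}[\cdots]-\mathcal E$, every term on the right is $O(t^{-5})$, and it carries an extra factor $t^{-1}$ relative to the trivial bound.  This does not improve the decay order, so I would use a different route: $E'=\mu E+O(t^{-1}|E|)+O(|E|^2)-\mathcal E$.  The issue is that $\mu E$ is of size $t^{-5}$, not $t^{-6}$.

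So instead differentiate the equation.  The function $D:=E'$ satisfies a linear equation of the same type, $D'=(\mu+\tilde a)D+\tilde f$, where
\[
 \tilde f=a'E-\mathcal E'=O(t^{-6}),
\]
using $a'=O(|E'|+t^{-2})$ and a preliminary bound $E'=O(t^{-5})$ obtained from the equation.  Moreover $D\to0$, which also comes from the equation.  Step 2 applied to $D$ then yields $D=O(t^{-6})$.  Iterating this up to $\ell=4$ uses $C^9$ regularity to differentiate $F(Y)$ and $G(Y)$ enough times.

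\textbf{Main obstacle.}  I expect the main difficulty to be the $\mu>0$ case in Step 2 and Step 3: justifying the representation from infinity for the nonlinear $a(t)$, and checking at each derivative level that the forcing term has improved by $t^{-1}$.  This requires some care with the terms involving $a^{(j)}$, which I would bound inductively in terms of the already-established estimates on $E^{(i)}$ for $i<\ell$.
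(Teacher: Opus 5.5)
Your argument is correct, but it handles the derivative estimates differently from the paper.

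\textbf{The paper's route.} The paper uses the $C^9$ hypothesis to build an eight-term expansion $P_8=y_*+\sum_{j=1}^8 b_jt^{-j}$ whose defect satisfies $D_8^{(\ell)}=O(t^{-9-\ell})$. The same sign dichotomy you use (variation of constants from a finite time if $\mu<0$, the terminal integral if $\mu>0$) then gives $R:=Y-P_8=O(t^{-9})$. The bounds $R^{(\ell)}=O(t^{-9})$ for $\ell\le4$ follow by differentiating $R'=AR-D_8$ and using $A^{(j)}=O(1)$. No gain per derivative is needed, because $t^{-9}\le t^{-5-\ell}$ for $\ell\le4$. The explicit terms $b_5t^{-5},\dots,b_8t^{-8}$ already have the required derivative decay.

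\textbf{Your route.} You keep four terms and instead gain a factor $t^{-1}$ per derivative by rerunning the dichotomy on $E^{(\ell)}$. This function satisfies $\bigl(E^{(\ell)}\bigr)'=(\mu+a)E^{(\ell)}+f_\ell$, with forcing
\[
f_\ell=\sum_{j=1}^{\ell}\binom{\ell}{j}a^{(j)}E^{(\ell-j)}-\mathcal E^{(\ell)}=O(t^{-5-\ell}).
\]
This bound holds once you know inductively that $a^{(j)}=O(t^{-1-j})$. For $j=\ell$ that in turn uses the preliminary bound $E^{(\ell)}=O(t^{-4-\ell})$, read off the differentiated equation. The limit $E^{(\ell)}\to0$ then supplies the terminal representation when $\mu>0$.

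\textbf{Trade-off.} Your version needs only about $C^5$ regularity, but you must repeat the dichotomy four times and track the decay of each $a^{(j)}$. The paper's over-expansion is what the $C^9$ hypothesis buys, and it reduces the derivative step to a one-line induction.

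Finally, the ``main obstacle'' you anticipate is not one. Once $Y$ is fixed, $a$ is just a continuous function of $t$ tending to $0$. The terminal-integral formula is therefore justified exactly as for a linear equation, which is how the paper does it.
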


\begin{proof}
Equation~\eqref{eq:asymptotically-autonomous-ode} and the regularity of
$F$ and $G$ imply, by induction, that $Y$ has all derivatives required below
on $(T_0,\infty)$.  We first construct an expansion with eight terms.  Let
\[
\begin{aligned}
 p_8(x)&=\sum_{j=1}^8 b_jx^j,\\
 P_8(t)&=y_*+p_8(t^{-1}),\\
 D_8(t)&=P_8'(t)-F(P_8(t))-t^{-1}G(P_8(t)).
\end{aligned}
\]
Here $D_8$ is the defect of $P_8$.
With $x=t^{-1}$, this defect can be written as $D_8(t)=H_8(x)$, where
\[
 H_8(x)=-x^2p_8'(x)-F(y_*+p_8(x))-xG(y_*+p_8(x)).
\]
At order $x^j$, the coefficient $b_j$ enters the Taylor expansion of
$H_8$ only through the term $-\mu b_jx^j$; all remaining terms depend only
on $b_1,\ldots,b_{j-1}$.  Since $\mu\ne0$, the coefficients
$b_1,\ldots,b_8$ may therefore be chosen recursively so that
\[
 H_8^{(j)}(0)=0,
 \qquad 0\le j\le8.
\]
The coefficient of $x$ is $-\mu b_1-G(y_*)$, which gives the stated value of
$b_1$.  Since $H_8\in C^9$ on an interval containing the origin, Taylor's
theorem applied to $H_8^{(m)}$ gives
$H_8^{(m)}(x)=O(x^{9-m})$ for $0\le m\le4$.  Repeated use of
$\dd x/\dd t=-x^2$ then yields
\begin{equation}
 D_8^{(\ell)}(t)=O(t^{-9-\ell}),
 \qquad 0\le\ell\le4.
 \label{eq:D8-bound}
\end{equation}

Set $R=Y-P_8$.  For all sufficiently large $t$, both $Y(t)$ and $P_8(t)$
belong to a compact subinterval of $I$.  The mean-value formula gives the
exact equation
\begin{equation}
 R'(t)=A(t)R(t)-D_8(t),
 \label{eq:R-linear-equation}
\end{equation}
where
\[
 A(t)=\int_0^1\left[
 F'(P_8(t)+\theta R(t))
 +t^{-1}G'(P_8(t)+\theta R(t))
 \right]\dd\theta.
\]
Since $Y(t),P_8(t)\to y_*$, one has $A(t)\to\mu$.

Assume first that $\mu<0$.  Choose $T\ge T_0$ and $\alpha>0$ such that
$A(t)\le-\alpha$ for $t\ge T$.  Variation of constants gives
\begin{equation*}
 R(t)=\exp\left\{\int_T^tA(r)\,\dd r\right\}R(T)
 -\int_T^t\exp\left\{\int_s^tA(r)\,\dd r\right\}D_8(s)\,\dd s.
\end{equation*}
The first term is exponentially small.  For the integral term, split the
integral at $t/2$.  For $t\ge2T$, \eqref{eq:D8-bound} gives
\[
 \int_T^{t/2}e^{-\alpha(t-s)}|D_8(s)|\,\dd s
 \le e^{-\alpha t/2}\int_T^\infty|D_8(s)|\,\dd s
 =O(e^{-\alpha t/2}).
\]
On $[t/2,t]$, one has $s^{-9}\le2^9t^{-9}$, and hence
\[
\begin{aligned}
 \int_{t/2}^t e^{-\alpha(t-s)}s^{-9}\,\dd s
 \le 2^9t^{-9}
   \int_{t/2}^t e^{-\alpha(t-s)}\,\dd s
 \le \frac{2^9}{\alpha}t^{-9}.
\end{aligned}
\]
Together with \eqref{eq:D8-bound}, these estimates prove
$R(t)=O(t^{-9})$.

Assume now that $\mu>0$.  Increase $T$ so that $A(t)\ge\alpha>0$ for
$t\ge T$. Variation of constants again gives
\[
 R(t)=e^{-\int_t^SA(r)\,\dd r}R(S)
 +\int_t^S e^{-\int_t^sA(r)\,\dd r}D_8(s)\,\dd s.
\]
Since $R(S)\to0$, passing to the limit $S\to\infty$ yields
\begin{equation}
 R(t)=\int_t^\infty
 e^{-\int_t^sA(r)\,\dd r}D_8(s)\,\dd s.
 \label{eq:terminal-integral}
\end{equation}
The bound $A\ge\alpha$ and \eqref{eq:D8-bound} give
$R(t)=O(t^{-9})$.

It remains to control derivatives of $R$.  Suppose inductively that
$R^{(j)}(t)=O(t^{-9})$ for $0\le j\le m$, where $m\le3$.
Differentiation of the displayed formula for $A$ shows that
$A^{(j)}(t)=O(1)$ for $0\le j\le m$: every term contains only bounded
derivatives of $F$ and $G$, derivatives of the explicit function $P_8$, and
derivatives of $R$ already controlled by the induction hypothesis.
Differentiating \eqref{eq:R-linear-equation} $m$ times therefore gives
\[
 R^{(m+1)}(t)
 =\sum_{j=0}^{m}\binom mj A^{(j)}(t)R^{(m-j)}(t)
  -D_8^{(m)}(t)
 =O(t^{-9}).
\]
Thus $R^{(\ell)}(t)=O(t^{-9})$ for $0\le\ell\le4$.  Finally,
\[
 Y-y_*-\sum_{j=1}^4b_jt^{-j}
 =\sum_{j=5}^8b_jt^{-j}+R,
\]
and differentiating this identity proves
\eqref{eq:finite-asymptotic-expansion}.
\end{proof}

\begin{lemma}
\label{lem:linear-branch-regularity}
Let $W$ be a positive solution of \eqref{eq:W-ode} with parameter
$\lambda>0$.

If $W(u)\sim u/\lambda$ as $u\downarrow0$, then $W$ extends to a
$C^4$ function on $[0,1)$ and
\begin{equation*}
 W(u)=\frac u\lambda+
 \left(\lambda^{-4}-\lambda^{-1}\right)u^2+O(u^3)
 \quad\text{as }u\downarrow0.
\end{equation*}
More precisely, there exist $\eta>0$, coefficients $c_3,c_4,c_5$, and a
function $\rho_0\in C^4((0,\eta])$ such that
\begin{samepage}
\begin{equation}
 W(u)=\frac u\lambda+
 \left(\lambda^{-4}-\lambda^{-1}\right)u^2
 +\sum_{j=3}^5c_ju^j+\rho_0(u),
 \qquad
 \rho_0^{(k)}(u)=O(u^{6-k})
 \label{eq:W-expansion-zero-differentiable}
\end{equation}
for $0\le k\le4$.
\end{samepage}

If $W(u)\sim(1-u)/\lambda$ as $u\uparrow1$, then $W$ extends to a
$C^4$ function on $(0,1]$ and
\begin{equation*}
 W(u)=\frac{1-u}{\lambda}
 -\left(\lambda^{-4}+\lambda^{-1}\right)(1-u)^2
 +O((1-u)^3)
 \quad\text{as }u\uparrow1.
\end{equation*}
More precisely, there exist $\eta>0$, coefficients
$\widehat c_3,\widehat c_4,\widehat c_5$, and a function
$\rho_1\in C^4((0,\eta])$ such that, with $r=1-u$,
\begin{samepage}
 \begin{equation}
 W(1-r)=\frac r\lambda
 -\left(\lambda^{-4}+\lambda^{-1}\right)r^2
 +\sum_{j=3}^5\widehat c_jr^j+\rho_1(r),
 \qquad
 \rho_1^{(k)}(r)=O(r^{6-k})
 \label{eq:W-expansion-one-differentiable}
\end{equation}
for $0\le k\le4$.
\end{samepage}

At each endpoint for which the corresponding linear asymptotic holds, there
exist $\eta>0$ and $C<\infty$ such that
\begin{equation}
 |W'(u)|+|W(u)W''(u)|+|W(u)^2W'''(u)|\le C
 \label{eq:W-relative-derivative-bounds}
\end{equation}
throughout the associated one-sided interval $(0,\eta]$ or
$[1-\eta,1)$.
\end{lemma}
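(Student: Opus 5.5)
The plan is to reduce the endpoint behaviour to the asymptotically autonomous ODE of Lemma~\ref{lem:finite-asymptotic-ode} through a logarithmic change of variable. I will carry out the argument at $u=0$; the endpoint $u=1$ follows from the same argument.

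\textbf{Reduction at $u=0$.} Set $Y(t):=W(u)/u$ with $u=1/t$, so that $t\to\infty$ corresponds to $u\downarrow0$. Since $W=uY$, we have $W'=Y+uY_u$. Substituting into \eqref{eq:W-ode} gives
\[
u^2Y^2(Y+uY_u)-\lambda uY+u(1-u)=0 .
\]
Dividing by $u$ yields
\[
u^2Y^2Y_u=\lambda Y-1-uY^3+u.
\]
Since $Y_t=-u^2Y_u$, this becomes
\[
Y_t=-\frac{\lambda Y-1}{Y^2}+\frac1t\,\frac{Y^3-1}{Y^2}.
\]
This is \eqref{eq:asymptotically-autonomous-ode} with $F(y)=(1-\lambda y)/y^2$, $G(y)=(y^3-1)/y^2$ and $y_*=1/\lambda$. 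Both $F$ and $G$ are smooth near $y_*>0$, $F(y_*)=0$, and $F'(y_*)=-\lambda/y_*^2=-\lambda^3\ne0$. The hypothesis $W(u)\sim u/\lambda$ is exactly $Y\to y_*$. Lemma~\ref{lem:finite-asymptotic-ode} then gives
\[
b_1=-G(y_*)/F'(y_*)=(\lambda^{-3}-1)\lambda^2/\lambda^3=\lambda^{-4}-\lambda^{-1},
\]
together with an expansion of $Y$ to order $t^{-4}$ whose remainder has derivatives of order $\ell\le4$ bounded by $O(t^{-5-\ell})$.

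\textbf{Converting back to $u$.} Multiplying by $u$, we get $W(u)=u/\lambda+\sum_{j=1}^4 b_ju^{j+1}+u\,\tilde R(1/u)$. The leading coefficients match the claim, and $c_{j+1}=b_j$. The remainder $\rho_0(u)=u\tilde R(1/u)$ requires some care with derivatives. By the chain rule (Fa\`a di Bruno), $\frac{d^k}{du^k}\tilde R(1/u)$ is a combination of $\tilde R^{(\ell)}(1/u)\,u^{-k-\ell}$ with $\ell\le k$. Each such term is $O(u^{5+\ell})\cdot u^{-k-\ell}=O(u^{5-k})$, and Leibniz's rule then gives $\rho_0^{(k)}=O(u^{6-k})$ for $k\le4$. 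The resulting polynomial-plus-remainder representation shows that $W$ and its first four derivatives have limits as $u\downarrow0$, so $W$ extends to a $C^4$ function on $[0,1)$; interior regularity comes from the ODE itself, since $W>0$ there.

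\textbf{The endpoint $u=1$.} Put $r=1-u$ and $V(r)=W(1-r)$. The equation becomes
\[
V^2V_r=\lambda V-r(1-r).
\]
The same substitution $V=rY$, $t=1/r$ produces the sign changes $F(y)=(1-\lambda y)/y^2$ and $G(y)=-(1+y^3)/y^2$, hence $b_1=-(\lambda^{-4}+\lambda^{-1})$. I will double-check this sign bookkeeping, which is the main place an error could slip in.

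\textbf{Relative derivative bounds.} The estimate \eqref{eq:W-relative-derivative-bounds} follows directly from the expansion. Near a linear endpoint, $W'$, $W''$ and $W'''$ are bounded by the $C^4$ extension, and $W$ is bounded. Hence each of $|W'|$, $|WW''|$ and $|W^2W'''|$ is bounded on $(0,\eta]$, and likewise on $[1-\eta,1)$.

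The main obstacle is purely technical: verifying the regularity hypotheses $F,G\in C^9(I)$, which hold because $y_*>0$, and carrying out the derivative bookkeeping under the inversion $u=1/t$ so that exactly $O(u^{6-k})$ is obtained.
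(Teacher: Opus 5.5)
Your approach matches the paper's. You use the substitution $Y(t)=tW(1/t)$, the same $F(y)=(1-\lambda y)/y^2$ and $G(y)=y-y^{-2}$ at $u=0$, then Lemma~\ref{lem:finite-asymptotic-ode}, then transfer the remainder back to $u$. Your Fa\`a di Bruno count is a valid substitute for the paper's explicit formulas for $E(r)=rR(r^{-1})$.

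The $u=1$ step contains a sign error that needs fixing. Since $V_r=-W'(1-r)$, the equation is $V^2V_r=r(1-r)-\lambda V$, not $\lambda V-r(1-r)$. With $V=rY$ and $t=1/r$ this gives
\[
 Y_t=\frac{\lambda Y-1}{Y^2}+\frac1t\,\frac{1+Y^3}{Y^2},
\]
so $F(y)=(\lambda y-1)/y^2$ and $G(y)=(1+y^3)/y^2$. These are the negatives of the functions you wrote. Your $G$ also does not follow from your own (incorrect) equation, which would give $G=(y^3-1)/y^2$ and the wrong value $b_1=\lambda^{-4}-\lambda^{-1}$. You arrive at the correct $b_1=-(\lambda^{-4}+\lambda^{-1})$ only because $-G(y_*)/F'(y_*)$ is unchanged under $(F,G)\mapsto(-F,-G)$.

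The sign matters. At $u=1$ one has $\mu=F'(y_*)=+\lambda^3>0$, so this endpoint is not ``the same argument'' as $u=0$. It is the repelling case of Lemma~\ref{lem:finite-asymptotic-ode}. There the remainder comes from the terminal representation \eqref{eq:terminal-integral}, and the hypothesis $Y\to y_*$ is used essentially. Because that lemma covers $\mu>0$, the conclusion holds once the sign is corrected. The rest of your proposal, including the $C^4$ extension and the bound \eqref{eq:W-relative-derivative-bounds}, agrees with the paper.
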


\begin{proof}
Consider first the endpoint $u=0$ and set
\[
 Y(t)=tW(t^{-1}),
 \qquad t>1.
\]
Substitution of $W(u)=uY(u^{-1})$ into \eqref{eq:W-ode} gives
\begin{equation*}
 Y'(t)=-\frac{\lambda Y(t)-1}{Y(t)^2}
       -\frac{1-Y(t)^3}{tY(t)^2},
 \qquad
 Y(t)\longrightarrow \lambda^{-1}.
\end{equation*}
Thus Lemma~\ref{lem:finite-asymptotic-ode} applies with
\[
 F_0(y)=\frac{1-\lambda y}{y^2},
 \qquad
 G_0(y)=y-y^{-2},
 \qquad
 y_*=\lambda^{-1}.
\]
Since
\[
 F_0'(\lambda^{-1})=-\lambda^3,
 \qquad
 -\frac{G_0(\lambda^{-1})}{F_0'(\lambda^{-1})}
 =\lambda^{-4}-\lambda^{-1},
\]
there are coefficients $b_2,b_3,b_4$ and a remainder $R_0$ such that
\(
 R_0^{(k)}(t)=O(t^{-5-k}),
 0\le k\le4,
\)
and
\[
 Y(t)=\lambda^{-1}
 +\left(\lambda^{-4}-\lambda^{-1}\right)t^{-1}
 +\sum_{j=2}^4b_jt^{-j}+R_0(t).
\]
Multiplication by $u=t^{-1}$ gives
\eqref{eq:W-expansion-zero-differentiable}, once the transformed remainder
is shown to have the stated derivative bounds.

For the endpoint $u=1$, set
\[
 \widehat Y(t)=tW(1-t^{-1}),
 \qquad t>1.
\]
Then
\begin{equation*}
 \widehat Y'(t)=\frac{\lambda\widehat Y(t)-1}{\widehat Y(t)^2}
 +\frac{1+\widehat Y(t)^3}{t\widehat Y(t)^2},
 \qquad
 \widehat Y(t)\longrightarrow\lambda^{-1}.
\end{equation*}
Here Lemma~\ref{lem:finite-asymptotic-ode} applies with
\[
 F_1(y)=\frac{\lambda y-1}{y^2},
 \qquad
 G_1(y)=y+y^{-2}.
\]
Moreover,
\[
 F_1'(\lambda^{-1})=\lambda^3,
 \qquad
 -\frac{G_1(\lambda^{-1})}{F_1'(\lambda^{-1})}
 =-(\lambda^{-4}+\lambda^{-1}).
\]
The case $F_1'(\lambda^{-1})>0$ is covered by the terminal representation
\eqref{eq:terminal-integral}.  It yields
\eqref{eq:W-expansion-one-differentiable} after multiplication by
$r=t^{-1}=1-u$.

We record explicitly the change of variables used for both remainders.  Let
$R$ be a function satisfying
\[
 \frac{\dd^kR}{\dd t^k}(t)=O(t^{-5-k}),
 \qquad 0\le k\le4,
\]
and define $E(r)=rR(r^{-1})$ for $r>0$.  Put $t=r^{-1}$.
Derivatives of $E$ in the following display are taken with respect to $r$,
whereas derivatives of $R$ are taken with respect to $t$.  Since
$\dd t/\dd r=-t^2$, the chain rule gives
\begin{samepage}
 \begin{align*}
  \frac{\dd E}{\dd r}(r)&=R(t)-t\frac{\dd R}{\dd t}(t),
 &&\frac{\dd^2E}{\dd r^2}(r)=t^3\frac{\dd^2R}{\dd t^2}(t), \\
  \frac{\dd^3E}{\dd r^3}(r)&=-3t^4\frac{\dd^2R}{\dd t^2}(t)-t^5\frac{\dd^3R}{\dd t^3}(t),
  &&\frac{\dd^4E}{\dd r^4}(r)=12t^5\frac{\dd^2R}{\dd t^2}(t)+8t^6\frac{\dd^3R}{\dd t^3}(t)+t^7\frac{\dd^4R}{\dd t^4}(t).
\end{align*}
\end{samepage}

Consequently,
\[
 \frac{\dd^kE}{\dd r^k}(r)=O(r^{6-k}),
 \qquad 0\le k\le4.
\]
Thus the transformed remainder, together with its first four derivatives,
extends continuously to $r=0$ with value zero.  Taking $r=u$ at the left
endpoint and $r=1-u$ at the right endpoint proves the asserted $C^4$
extensions.  The expansions then imply \eqref{eq:W-relative-derivative-bounds}
on the corresponding one-sided intervals.
\end{proof}

\begin{proof}[Proof of Proposition~\ref{prop:full-line-distribution}]
By Proposition~\ref{prop:critical-branches} and
Lemma~\ref{lem:linear-u1}, both endpoint branches of $W_{\lambda}$ are
linear.  Lemma~\ref{lem:linear-branch-regularity} therefore applies at
$u=0$ and $u=1$.

Define
\begin{equation*}
 \Theta_{\lambda}(v)=\int_{1/2}^{v}
 \frac{\dd s}{\beta W_{\lambda}(1-s)},
 \qquad 0<v<1.
\end{equation*}
As $s\downarrow0$,
$W_{\lambda}(1-s)\sim s/\lambda$, whereas as $s\uparrow1$,
$W_{\lambda}(1-s)\sim(1-s)/\lambda$.  Hence
\[
 \Theta_{\lambda}(v)\longrightarrow-\infty
 \quad\text{as }v\downarrow0,
 \qquad
 \Theta_{\lambda}(v)\longrightarrow\infty
 \quad\text{as }v\uparrow1.
\]
Since $\Theta_{\lambda}'(v)>0$, the map $\Theta_{\lambda}$ is a smooth bijection from
$(0,1)$ onto $\mathbb R$.  Write
$\Phi=\Phi_\lambda:=\Theta_{\lambda}^{-1}$.  Then \(\Phi\) satisfies
\eqref{eq:Phi-phase-definition}.  Conversely, separation of variables
applied to the differential equation in \eqref{eq:Phi-phase-definition}
shows that every increasing solution satisfies
$\Theta_{\lambda}(\Phi(z))=z-z_0$ for some constant $z_0$.  Thus any two such
solutions differ by a translation, and the normalization $\Phi(0)=1/2$
fixes that translation.

Put $q=q_\lambda=\Phi'$ and $u=1-\Phi(z)$.  Then
$q=\beta W_{\lambda}(u)$ and
$u'=-\beta W_{\lambda}(u)$.  In particular,
\[
 q'=-\beta^2W_{\lambda}(u)W_{\lambda}'(u).
\]
Using $a\beta^3=2$ and \eqref{eq:W-ode}, one obtains
\[
 a\Phi'\Phi''
 =-2W_{\lambda}(u)^2W_{\lambda}'(u)
 =-2\lambda W_{\lambda}(u)+2u(1-u)
 =-\kappa_{\lambda}q+2\Phi(1-\Phi),
\]
which proves \eqref{eq:upper-profile-equation}.

All occurrences of $W_{\lambda}$ and its derivatives in the following identities
are evaluated at $u=1-\Phi(z)$.  Repeated differentiation gives
\[
\begin{aligned}
 \frac{q'}q
 =-\beta W_{\lambda}',\qquad
 \frac{q''}q
 =\beta^2\bigl((W_{\lambda}')^2
   +W_{\lambda}W_{\lambda}''\bigr),\\
 \frac{q'''}q
 =-\beta^3\bigl((W_{\lambda}')^3
   +4W_{\lambda}W_{\lambda}'W_{\lambda}''
   +W_{\lambda}^2W_{\lambda}'''\bigr).
\end{aligned}
\]
The bounds in \eqref{eq:W-relative-derivative-bounds} control these
expressions on one-sided intervals at $0$ and $1$.  On the complementary
compact subinterval of $(0,1)$ they are bounded by continuity.  This proves
\eqref{eq:full-line-relative-bounds}.

As $z\to-\infty$, one has $\Phi(z)\to0$ and
$u=1-\Phi(z)\to1$.  The linear expansion at $u=1$ therefore gives
\[
 \frac{q(z)}{\Phi(z)}
 =\beta\frac{W_{\lambda}(1-\Phi(z))}{\Phi(z)}
 \longrightarrow\frac\beta\lambda.
\]
Similarly, as $z\to\infty$, the linear expansion at $u=0$ gives
\[
 \frac{q(z)}{1-\Phi(z)}
 =\beta\frac{W_{\lambda}(u)}u
 \longrightarrow\frac\beta\lambda.
\]
This proves \eqref{eq:full-line-tail-ratios}.

Choose $L>0$ such that both ratios in
\eqref{eq:full-line-tail-ratios} are at least
$c:=\beta/(2\lambda)$ on their respective tails.  Then
\[
 (\log\Phi)'=\frac q\Phi\ge c
 \quad\text{for }z\le-L,
 \qquad
 -\bigl(\log(1-\Phi)\bigr)'=\frac q{1-\Phi}\ge c
 \quad\text{for }z\ge L.
\]
Integration gives the stated exponential bounds after increasing the
multiplicative constant on the compact interval $[-L,L]$.  Finally, the
tail-integral identity
\[
 \int_{\mathbb R}|x|\,\dd\Phi(x)
 =\int_{-\infty}^0\Phi(x)\,\dd x
  +\int_0^\infty(1-\Phi(x))\,\dd x
\]
shows that the absolute first moment is finite.
\end{proof}

\begin{proof}[Proof of Proposition~\ref{prop:hard-edge-distribution}]
Proposition~\ref{prop:subcritical-W} gives $W_{\lambda}(0)>0$, and
Lemma~\ref{lem:linear-u1} gives
$W_{\lambda}(u)\sim(1-u)/\lambda$ as $u\uparrow1$.  Define
\[
 z(v)=\int_0^v\frac{\dd s}{\beta W_{\lambda}(s)},
 \qquad 0\le v<1.
\]
The function $z(v)$ is continuous at $v=0$, while the linear expansion at $s=1$
implies $z(v)\to\infty$ as $v\uparrow1$.  Since $z'(v)>0$, the map $z$ is
a bijection from $[0,1)$ onto $[0,\infty)$.  Its inverse is the function
$\Psi_{\lambda}$ in \eqref{eq:Psi-subcritical-definition}.  Since
$W_{\lambda}$ is smooth and strictly positive on $[0,1)$, with
$W_{\lambda}(0)>0$, the one-sided inverse function theorem shows that
$\Psi_{\lambda}$ is smooth on $[0,\infty)$.  Separation of variables gives
uniqueness.

For $z\ge0$, write $v=\Psi_{\lambda}(z)$.  Then
$q_{\lambda}=\beta W_{\lambda}(v)$ and
\[
 q_{\lambda}'=\beta^2W_{\lambda}(v)W_{\lambda}'(v).
\]
Since $a\beta^3=2$, equation \eqref{eq:W-ode} gives
\[
 a q_{\lambda}q_{\lambda}'
 =2W_{\lambda}(v)^2W_{\lambda}'(v)
 =2\lambda W_{\lambda}(v)-2v(1-v)
 =\kappa_{\lambda}q_{\lambda}
  -2\Psi_{\lambda}(1-\Psi_{\lambda}),
\]
which proves \eqref{eq:lower-profile-equation}, including the value at
$z=0$ by continuity.

All occurrences of $W_{\lambda}$ and its derivatives below are evaluated at
$v=\Psi_{\lambda}(z)$.  Repeated differentiation gives
\[
\begin{aligned}
 \frac{q_{\lambda}'}{q_{\lambda}}
 =\beta W_{\lambda}',\qquad
 \frac{q_{\lambda}''}{q_{\lambda}}
 =\beta^2\bigl((W_{\lambda}')^2
   +W_{\lambda}W_{\lambda}''\bigr),\qquad
 \frac{q_{\lambda}'''}{q_{\lambda}}
 =\beta^3\bigl((W_{\lambda}')^3
   +4W_{\lambda}W_{\lambda}'W_{\lambda}''
   +W_{\lambda}^2W_{\lambda}'''\bigr).
\end{aligned}
\]
On an interval $[0,\eta]$, Proposition~\ref{prop:subcritical-W} implies that
$W_{\lambda}$ is smooth and bounded away from zero.  On an interval
$[1-\eta,1)$, Lemma~\ref{lem:linear-branch-regularity} gives
\eqref{eq:W-relative-derivative-bounds}.  Continuity controls the remaining
compact interval.  Hence \eqref{eq:subcritical-relative-bounds} follows.
The linear expansion at $v=1$ also gives
\eqref{eq:subcritical-right-tail}.

Choose $L>0$ such that
$q_{\lambda}(z)/(1-\Psi_{\lambda}(z))\ge
c:=\beta/(2\lambda)$ for $z\ge L$.  Then
\[
 -\frac{\dd}{\dd z}\log(1-\Psi_{\lambda}(z))
 =\frac{q_{\lambda}(z)}{1-\Psi_{\lambda}(z)}\ge c,
 \qquad z\ge L.
\]
Integration gives the stated exponential bound.  Since the distribution is
supported on $[0,\infty)$,
\[
 \int_{[0,\infty)}x\,\dd\Psi_{\lambda}(x)
 =\int_0^\infty(1-\Psi_{\lambda}(x))\,\dd x<\infty.
\]

It remains to construct the extension of $q_{\lambda}$.  Set
$h=\log q_{\lambda}$ on $[0,\infty)$.  From
\eqref{eq:subcritical-relative-bounds},
\[
\begin{aligned}
 h'&=\frac{q_{\lambda}'}{q_{\lambda}},\qquad
 h''&=\frac{q_{\lambda}''}{q_{\lambda}}
      -\left(\frac{q_{\lambda}'}{q_{\lambda}}\right)^2,\qquad
 h'''&=\frac{q_{\lambda}'''}{q_{\lambda}}
      -3\frac{q_{\lambda}'q_{\lambda}''}{q_{\lambda}^2}
      +2\left(\frac{q_{\lambda}'}{q_{\lambda}}\right)^3
\end{aligned}
\]
are bounded on $[0,\infty)$.

 Let
\[
 P(z)=\sum_{j=0}^3\frac{h^{(j)}(0)}{j!}z^j,
\]
and choose $\chi\in C^\infty(\mathbb R)$ such that
$\chi=1$ on $[-1/2,0]$ and $\chi=0$ on $(-\infty,-1]$.  Define
\[
 \overline h(z)=
 \begin{cases}
  h(z),&z\ge0,\\
  \chi(z)P(z)+(1-\chi(z))h(0),&z<0.
 \end{cases}
\]
Then $\overline h\in C^3(\mathbb R)$, it agrees with $h$ on
$[0,\infty)$, and its first three derivatives are bounded.  Set
$\overline q_{\lambda}=e^{\overline h}$.  The identities
\[
 {
 \frac{\overline q_{\lambda}'}{\overline q_{\lambda}}
 =\overline h',
 \qquad
 \frac{\overline q_{\lambda}''}{\overline q_{\lambda}}
 =\overline h''+(\overline h')^2,
 \qquad
 \frac{\overline q_{\lambda}'''}{\overline q_{\lambda}}
 =\overline h'''+3\overline h'\overline h''+(\overline h')^3}
\]
prove \eqref{eq:q-extension-bounds}.
\end{proof}

\section*{Code availability}
The Lean~4 development is publicly available at
\url{https://github.com/iyalice/Series-Parallel_lean}.
The accompanying software release is \texttt{v1.0.3}, available at
\url{https://github.com/iyalice/Series-Parallel_lean/releases/tag/v1.0.3},
with source archive
\url{https://github.com/iyalice/Series-Parallel_lean/archive/refs/tags/v1.0.3.zip}.
The release corresponds to commit
\texttt{9eb6338993276b2c2783\allowbreak{}b2b56236d7ee1badb91f}.
It contains the reference manuscript, manuscript--Lean correspondence,
source map, build and axiom reports, and reproduction instructions.
The software is cited in~\cite{DingHeLiangZheng2026Software}.
The development uses Lean~4.32.1, Lake~5.0.0, and mathlib~v4.32.1 at commit
\texttt{520045ab14e26149ee97\allowbreak{}0e2e617ca04b09bde5d6}, under the Apache-2.0 license.

\section*{Declaration of generative AI and AI-assisted technologies in the manuscript preparation process}

During the preparation of this manuscript, the authors used OpenAI
ChatGPT and Codex to assist with language editing, organization, and
clarity of presentation.  Their use in the research and formalization
process is described separately in
Section~\ref{subsec:ai-methodology}.  The authors reviewed and edited
all resulting text and take full responsibility for the content of
the article.


\begin{thebibliography}{99}

\bibitem{AddarioBerryEtAl2020}
L.~Addario-Berry, H.~Cairns, L.~Devroye, C.~Kerriou, and R.~Mitchell,
\emph{Hipster random walks},
Probab. Theory Related Fields \textbf{178} (2020), 437--473.
\url{https://doi.org/10.1007/s00440-020-00980-z}

\bibitem{AddarioBerryBeckmanLin2022}
L.~Addario-Berry, E.~Beckman, and J.~Lin,
\emph{Asymmetric cooperative motion in one dimension},
Trans. Amer. Math. Soc. \textbf{375} (2022), no.~4, 2883--2913.
\url{https://doi.org/10.1090/tran/8581}

\bibitem{AddarioBerryBeckmanLin2024}
L.~Addario-Berry, E.~Beckman, and J.~Lin,
\emph{Symmetric cooperative motion in one dimension},
Probab. Theory Related Fields \textbf{188} (2024), 625--666.
\url{https://doi.org/10.1007/s00440-023-01244-2}

\bibitem{AldousBandyopadhyay2005}
D.~J. Aldous and A.~Bandyopadhyay,
\emph{A survey of max-type recursive distributional equations},
Ann. Appl. Probab. \textbf{15} (2005), no.~2, 1047--1110.
\url{https://doi.org/10.1214/105051605000000142}

\bibitem{AudritoVazquez2017}
A.~Audrito and J.~L. V\'azquez,
\emph{The Fisher--KPP problem with doubly nonlinear diffusion},
J. Differential Equations \textbf{263} (2017), 7647--7708.
\url{https://doi.org/10.1016/j.jde.2017.08.025}

\bibitem{AuffingerCable2017}
A.~Auffinger and D.~Cable,
\emph{Pemantle's min-plus binary tree},
arXiv:1709.07849 [math.PR], 2017.
\url{https://doi.org/10.48550/arXiv.1709.07849}

\bibitem{BarlesSouganidis1991}
G.~Barles and P.~E. Souganidis,
\emph{Convergence of approximation schemes for fully nonlinear second order
equations},
Asymptotic Anal. \textbf{4} (1991), no.~3, 271--283.
\url{https://doi.org/10.3233/ASY-1991-4305}

\bibitem{ChenDerridaDuquesneShi2026}
X.~Chen, B.~Derrida, T.~Duquesne, and Z.~Shi,
\emph{The distance on the slightly supercritical random series--parallel graph},
Adv. Appl. Probab. \textbf{58} (2026), no.~1, 80--121.
\url{https://doi.org/10.1017/apr.2025.10023}

\bibitem{ChenDuquesneShi2026}
X.~Chen, T.~Duquesne, and Z.~Shi,
\emph{Hipster random walks, random series--parallel graph and random homogeneous
systems},
arXiv:2511.16880v2 [math.PR], 2026.
\url{https://arxiv.org/abs/2511.16880v2}

\bibitem{DingHeLiangZheng2026Software}
R.~Ding, Z.~He, Y.~Liang, and Y.~Zheng,
\emph{Lean~4 formalization for Distance and resistance on random series--parallel graphs},
software release v1.0.3, 2026.
\url{https://github.com/iyalice/Series-Parallel_lean/releases/tag/v1.0.3}



\bibitem{EnguicaGavioliSanchez2013}
R.~Engui\c{c}a, A.~Gavioli, and L.~Sanchez,
\emph{A class of singular first order differential equations with applications
in reaction--diffusion},
Discrete Contin. Dyn. Syst. \textbf{33} (2013), no.~1, 173--191.
\url{https://doi.org/10.3934/dcds.2013.33.173}



\bibitem{HadelerRothe1975}
K.~P. Hadeler and F.~Rothe,
\emph{Travelling fronts in nonlinear diffusion equations},
J. Math. Biol. \textbf{2} (1975), 251--263.
\url{https://doi.org/10.1007/BF00277154}

\bibitem{HamblyJordan2004}
B.~M. Hambly and J.~Jordan,
\emph{A random hierarchical lattice: the series--parallel graph and its
properties},
Adv. Appl. Probab. \textbf{36} (2004), no.~3, 824--838.
\url{https://doi.org/10.1239/aap/1093962236}

\bibitem{Jordan2002}
J.~Jordan,
\emph{Almost sure convergence for iterated functions of independent random
variables},
Ann. Appl. Probab. \textbf{12} (2002), no.~3, 985--1000.
\url{https://doi.org/10.1214/aoap/1031863178}

\bibitem{LiRogers1999}
D.~Li and T.~D. Rogers,
\emph{Asymptotic behavior for iterated functions of random variables},
Ann. Appl. Probab. \textbf{9} (1999), no.~4, 1175--1201.
\url{https://doi.org/10.1214/aoap/1029962869}

\bibitem{Morfe2026}
P.~S. Morfe,
\emph{Analysis of a class of recursive distributional equations including the
resistance of the series--parallel graph},
arXiv:2511.11036v3 [math.PR], 2026.
\url{https://arxiv.org/abs/2511.11036v3}

\bibitem{SanchezGardunoMaini1994}
F.~S\'anchez-Gardu\~no and P.~K. Maini,
\emph{Existence and uniqueness of a sharp travelling wave in degenerate
non-linear diffusion Fisher--KPP equations},
J. Math. Biol. \textbf{33} (1994), 163--192.
\url{https://doi.org/10.1007/BF00160178}

\bibitem{Shneiberg1986}
I.~Ya. Shneiberg,
\emph{Hierarchical sequences of random variables},
Theory Probab. Appl. \textbf{31} (1987), no.~1, 137--141.
\url{https://doi.org/10.1137/1131018}

\end{thebibliography}
\end{document}